\theoremstyle{definition}
\newtheorem{theoremdefinition}[theorem]{Theorem/Definition}
\DeclareMathOperator{\alg}{HSOP}
\DeclareMathOperator{\baj}{baj}
\DeclareMathOperator{\CGF}{CGF}
\DeclareMathOperator{\Gr}{Gr}
\DeclareMathOperator{\Hilb}{Hilb}
\DeclareMathOperator{\inv}{inv}
\DeclareMathOperator{\maj}{maj}
\DeclareMathOperator{\median}{median}
\let\Re\ralax\DeclareMathOperator{\Re}{Re}
\DeclareMathOperator{\sinc}{sinc}
\DeclareMathOperator{\Span}{Span}
\DeclareMathOperator{\uni}{uni}
\DeclareMathOperator{\lcc}{lcc}
\DeclareMathOperator{\gale}{Gale}
\newcommand{\bC}{\mathbb{C}}
\newcommand{\bE}{\mathbb{E}}
\newcommand{\bk}{\mathbbm{k}}
\newcommand{\bN}{\mathbb{N}}
\newcommand{\bP}{\mathbb{P}}
\newcommand{\bQ}{\mathbb{Q}}
\newcommand{\bR}{\mathbb{R}}
\newcommand{\bZ}{\mathbb{Z}}
\newcommand{\cA}{\mathcal{A}}
\newcommand{\cB}{\mathcal{B}}
\newcommand{\cC}{\mathcal{C}}
\newcommand{\cM}{\mathcal{M}}
\newcommand{\cN}{\mathcal{N}}
\newcommand{\cS}{\mathcal{S}}
\newcommand{\cU}{\mathcal{U}}
\newcommand{\cX}{\mathcal{X}}
\newcommand{\cY}{\mathcal{Y}}
\newcommand{\cZ}{\mathcal{Z}}
\numberwithin{equation}{section}
\title{Cyclotomic Generating Functions}
\author{Sara C. Billey\authornote{1}
\and
Joshua P. Swanson\authornote{2}
}
\email{billey@uw.edu}).}
\begin{document}

\maketitle

\begin{abstract}
  It is a remarkable fact that for many statistics on finite sets of
combinatorial objects, the roots of the corresponding generating
function are each either a complex root of unity or zero. These and
related polynomials have been studied for many years by a variety of
authors from the fields of combinatorics, representation theory,
probability, number theory, and commutative algebra. We call such
polynomials \textbf{cyclotomic generating functions} (CGFs). With
Konvalinka, we have studied the support and asymptotic distribution of the coefficients of several
families of CGFs arising from tableau and forest combinatorics. In
this paper, we survey general CGFs from algebraic, analytic, and
asymptotic perspectives. We review some of the many known examples of
CGFs in combinatorial representation theory; describe their
coefficients, moments, cumulants, and characteristic functions; and
give a variety of necessary and sufficient conditions for their
existence arising from probability, commutative algebra, and invariant
theory. As a sample result, we show that CGFs are ``generically''
asymptotically normal, generalizing a result of Diaconis on
$q$-binomial coefficients using work of Hwang--Zacharovas. We include
several open problems concerning CGFs.
\end{abstract}

\section{Introduction}\label{sec:intro}

Many formulas in enumerative combinatorics express the cardinality of
a finite set $X$ as a product or quotient of integers. In many cases
of interest, such formulas may be generalized to $q$-analogues with a
corresponding refined count of $X$ subject to the value of some
statistic on $X$.  Such $q$-analogues have nonnegative integer
coefficients.  A classic example is the \textbf{$q$-binomial
coefficients}, which $q$-count integer partitions that fit in the
$k\times (n-k)$ rectangle according to size for a given pair of
nonnegative integers $k\leq n$.  Here a partition
$\lambda =(\lambda_{1}\geq \cdots \geq \lambda_{j}\geq 0)$ fits in the
rectangle provided $j\leq k$ and each $\lambda_{i}$ is a nonnegative
integer less than or equal to $(n-k)$.  The size of $\lambda$ is the
sum of its parts, $|\lambda |=\sum \lambda_{i}$. These polynomials can be expressed
in two ways,
\begin{equation}\label{eq:qbinom}
  \binom{n}{k}_q \coloneqq \frac{[n]_q!}{[k]_q! [n-k]_q!} =
\sum_{\lambda \subset k\times (n-k)} q^{|\lambda |},
\end{equation}
where $[n]_q! \coloneqq [n]_q [n-1]_q \cdots [1]_q$ is a
\textbf{$q$-factorial} and $[n]_q \coloneqq 1 + q + \cdots + q^{n-1} =
(1-q^n)/(1-q)$ is a \textbf{$q$-integer}.  The $q$-binomial coefficient
$\binom{n}{k}_q$ is well known in topology
and geometry as the Poincar\'e polynomial of the Grassmannian variety
$\Gr(k, n)$ \cite{Fulton-book}. Here $X$ may be taken to be the set of Schubert cells of
$\Gr(k, n)$ and the $q$-statistic is the dimension of the cell.
Enumerative properties of the coefficients of $\binom{n}{k}_q$ have attracted
attention since the mid 19th century \cite{doi:10.1080/14786447808639408}.
They are a prime example of unimodality
\cite{Ohara.1990,Zeilberger.1986}.

In this paper, we study the class of such nonzero polynomials with
nonnegative integer coefficients arising as quotients of $q$-integers
from algebraic, analytic, and asymptotic perspectives.  Such
polynomials are closely associated with the cyclotomic polynomials
$\Phi_{n}$ from number theory. See \Cref{sec:background} for a review
of their key properties.  Since our examples all come from
$q$-counting formulas for well-known combinatorial objects and their
associated generating functions, we have chosen to call these
polynomials \textbf{cyclotomic generating functions} or \textbf{CGFs}
for short.  We begin by stating several equivalent characterizations
of this class. See \Cref{sec:algebraic} for the proof of their
equivalence.

\begin{theoremdefinition}\label{thmdef:cgf_equivalences}
Suppose $f(q)=\sum_{k=0}^{n} c_{k}q^{k}$ is a nonzero polynomial with
nonnegative integer coefficients.  The following are equivalent
definitions for the polynomial $f$ to be a \textbf{cyclotomic
generating function (CGF)}.
\medskip
  \begin{enumerate}[(i)]
    \item (Complex form.) The complex roots of $f(q)$ are
      all either roots of unity or zero.
\medskip

    \item (Kronecker form.) The complex roots of $f(q)$ all have
modulus at most 1 and the leading coefficient is the greatest common
divisor of all coefficients of $f$.
\medskip

    \item \label{item:cyclotomic} (Cyclotomic form.) The polynomial $f$ can be written
      as a positive integer times a product
      of cyclotomic polynomials and factors of $q$.
\medskip

    \item (Rational form.) There are multisets $\{a_1, \ldots, a_m\}$
      and $\{b_1, \ldots, b_m\}$
      of positive integers and integers $\alpha \in \bZ_{>0}$,
      $\beta \in \bZ_{\geq 0}$ such that
\[
        f(q) = \alpha q^\beta \cdot \prod_{j=1}^m
\frac{[a_j]_q}{[b_j]_q}
               = \alpha q^\beta \cdot \prod_{j=1}^m
                  \frac{1-q^{a_j}}{1-q^{b_j}}.
\]
      Moreover, this factorization is unique if the multisets are
      disjoint.  
\medskip

    \item (Probabilistic form.) There is a discrete random variable
      $\cX$ with probability distribution given by $P(\cX = k) = c_{k}/f(1)$ such that the following equality in distribution holds:
\[
        \cX + \cU_{b_1} + \cdots + \cU_{b_m}
	  = \beta + \cU_{a_1} + \cdots + \cU_{a_m},
\]
where the summands are all independent and $\beta$ is maximal such
that $q^{\beta} \mid f(q)$.  Here $\cU_a$ is a uniform random variable
supported on $\{0, 1, \ldots, a-1\}$ and $\alpha$ is the greatest common divisor of
all coefficients of $f$.  \medskip

    \item(Characteristic form.) There is a discrete random variable
$\cX$ on a uniform sample space supported on $\bZ_{\geq 0}$ with
probability generating function $\bE[q^{\cX}] = f(q)/f(1)$ such that
the scaled characteristic function $\phi_{\cX} (z):=\bE[e^{2\pi i z\cX}]$ has complex
zeros only at rational $z \in \bQ$.
\end{enumerate}
\end{theoremdefinition}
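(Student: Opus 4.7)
The plan is to prove the six conditions equivalent via a hub-and-spoke structure with the cyclotomic form (iii) as the center: I would prove the triangle (i) $\Leftrightarrow$ (iii) $\Rightarrow$ (ii) $\Rightarrow$ (i), then attach (iii) $\Leftrightarrow$ (iv) $\Leftrightarrow$ (v) and (i) $\Leftrightarrow$ (vi) as spokes.

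For (i) $\Rightarrow$ (iii), I would factor $f(q)$ over $\bC$. Since $f \in \bZ[q]$, the non-zero roots come in Galois orbits over $\bQ$; under hypothesis (i), each such orbit is the set of primitive $d$-th roots of unity for some $d \geq 1$, contributing $\Phi_d(q)^{m_d}$ with $m_d \geq 0$, while the root $0$ contributes $q^\beta$. The resulting leading coefficient $\alpha$ is a positive integer because each $\Phi_d$ and $q$ is monic and $f$ has nonnegative integer coefficients with leading coefficient at least one. The converse (iii) $\Rightarrow$ (i) is immediate from the definition of $\Phi_d$. To deduce (ii) from (iii), observe that roots of unity and $0$ have modulus at most $1$, and the monic polynomial $q^\beta \prod_d \Phi_d(q)^{m_d}$ has integer coefficients whose gcd is $1$ (since it is monic), so the gcd of coefficients of $f$ equals $\alpha$, which is the leading coefficient.

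The main obstacle is (ii) $\Rightarrow$ (i), for which I would invoke Kronecker's theorem on algebraic integers: if $\alpha$ is the common value of the leading coefficient and the gcd of coefficients, then $f(q)/\alpha \in \bZ[q]$ is monic with integer coefficients and all roots of modulus at most $1$, so by Kronecker each nonzero root is a root of unity.

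For (iii) $\Leftrightarrow$ (iv), I would use the identity $[n]_q = \prod_{d \mid n,\ d > 1} \Phi_d(q)$. Given (iii), one can iteratively replace the largest Cyclotomic factor $\Phi_{d}$ by $[d]_q / \prod_{d' \mid d,\ 1 < d' < d} \Phi_{d'}(q)$ to express $f$ as in (iv); conversely, expanding each $[a_j]_q$ and $[b_j]_q$ and using that the quotient is a polynomial with nonnegative coefficients yields a nonnegative cyclotomic exponent profile. For the uniqueness statement, under the disjointness hypothesis, I would apply Möbius inversion on the divisibility poset on integers $\geq 2$: the multiplicity data $\mu(d) = \#\{j : d \mid a_j\} - \#\{j : d \mid b_j\}$ is determined by $f$, and since $\delta_a \varepsilon_a = 0$ for each $a$ (disjointness), the recovered signed multiplicities $\gamma_a$ split uniquely into the multisets $\{a_j\}$ and $\{b_j\}$.

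For (iv) $\Leftrightarrow$ (v), the PGF of $\cU_a$ is $\bE[q^{\cU_a}] = [a]_q/a$, so taking PGFs of both sides of the equality in distribution and using independence gives
\[
  \frac{f(q)}{f(1)} \prod_{j=1}^{m} \frac{[b_j]_q}{b_j} = q^\beta \prod_{j=1}^{m} \frac{[a_j]_q}{a_j},
\]
which rearranges to the rational form with $\alpha = f(1) \prod b_j / \prod a_j$; the implication is reversible. Finally, for (i) $\Leftrightarrow$ (vi), observe $\phi_\cX(z) = f(e^{2\pi i z})/f(1)$, and $e^{2\pi i z}$ is never zero but is a root of unity precisely when $z \in \bQ$, so the zeros of $\phi_\cX$ lie in $\bQ$ if and only if every nonzero root of $f$ is a root of unity.
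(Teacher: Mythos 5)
Your proposal is correct and uses the same key ideas as the paper's proof: Kronecker's theorem for (ii) $\Rightarrow$ (i), the factorization $[n]_q = \prod_{1 < d \mid n} \Phi_d(q)$ connecting (iii) and (iv), the probability-generating-function identity for (iv) $\Leftrightarrow$ (v), and the observation that $e^{2\pi i z}$ is a root of unity precisely when $z \in \bQ$ for (i) $\Leftrightarrow$ (vi). The one genuine difference is in (iii) $\Rightarrow$ (iv): you use an iterative top-down replacement of $\Phi_d$ by $[d]_q / \prod_{1<d'<d,\ d'\mid d}\Phi_{d'}(q)$, while the paper applies M\"obius inversion directly via $\Phi_n(q) = \prod_{d\mid n}[d]_q^{\mu(n/d)}$. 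The paper's route has the advantage that $\sum_{d\mid n}\mu(n/d) = 0$ for $n>1$ immediately yields numerator and denominator multisets of equal size $m$, whereas your iterative procedure can terminate with unbalanced multisets and you would need to explicitly pad the shorter one with $[1]_q$ factors to match the form demanded by (iv); this is a small but real gap you should close. Conversely, your uniqueness argument via M\"obius inversion on the divisibility poset (recovering the signed multiplicities $\gamma_a$ from $f$ and splitting them by disjointness) is more explicit than the paper's one-line appeal to unique polynomial factorization, and is a nice addition; you should, however, note that $\gamma_1$ is not directly visible in the cyclotomic factorization (since $[1]_q = 1$) and is instead pinned down by the equal-size and disjointness constraints.
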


\begin{example}
Allow us to indulge in a thought experiment. Imagine yourself as Percy MacMahon at the turn of
the last century investigating \textbf{plane partitions} in an $x \times y \times z$
box for the first time. These are $x \times y$ matrices of nonnegative integers whose entries are at
most $z$ and weakly decrease along rows and columns. For $(x, y, z) = (3, 2, 2)$, you find there are $50$
plane partitions. We have $50 = 2 \cdot 5^2$, and a wide-eyed optimist may hope for a product formula akin
to that for $\binom{n}{k}$, though cancellations are difficult to uncover. Plane partitions come with the
\textbf{size} statistic, which is the sum of all entries. The size generating function here is
  \[ f_{(3,2,2)}(q)= 1 + q + 3q^2 + 4q^3 + 6q^4 + 6q^5 + 8q^6 + 6q^7 + 6q^8 + 4q^9 + 3q^{10} + q^{11} + q^{12}. \]
You immediately notice the generating function is monic, palindromic, and even unimodal. Ever-optimistic,
you try dividing off cyclotomic polynomial factors, which results in $f_{(3,2,2)}(q)=\Phi_6(q)\, \Phi_5(q)^2\, \Phi_4(q)$---you've found a CGF! Unique factorization as a reduced quotient of $q$-integers results in
  \[ f_{(3,2,2)}(q)=\frac{[6]_q\, [5]_q^2\, [4]_q}{[3]_q\, [2]_q^2\, [1]_q}. \]
We now see that most of the cancellations are ``hidden'' in the $q=1$ specialization $2 \cdot 5^2$.
It is not hard to imagine that further experimentation from here quickly leads you to the following version of
MacMahon's famous formula,
\begin{equation}\label{eq:PP}
f_{(x,y,z)}(q)=  \prod_{i=1}^x \prod_{j=1}^y \frac{[i+j+z-1]_q}{[i+j-1]_q},
\end{equation}
which for $(x, y, z) = (3, 2, 2)$ yields
  \[ f_{(3,2,2)}(q)= \frac{[6]_q [5]_q [4]_q [5]_q [4]_q [3]_q}{[4]_q [3]_q [2]_q [3]_q [2]_q [1]_q}. \]
\end{example}

\begin{definition}\label{def:basic}
  Among all CGFs, we focus on the family of \textbf{basic CGFs} with no $\alpha$ or $q^{\beta}$
factors, 
\begin{equation}\label{eq:basic_CGF}
        f(q) =  \prod_{j=1}^m \frac{[a_j]_q}{[b_j]_q}
	      = \prod_{j=1}^m  \frac{1-q^{a_j}}{1-q^{b_j}}.
\end{equation}
  The \textbf{basic CGF monoid} $\Phi^+$ is the monoid consisting of all basic CGFs under multiplication.
\end{definition}

By the rational form, basic CGFs are always palindromic and monic. They need not be unimodal in general, e.g.
\begin{equation}\label{eq:example1}
f(q)=q^{6} + q^{4} + q^{3} + q^{2} + 1 =
\frac{[5]_{q}[6]_{q}}{[2]_{q}[3]_{q}} \in \Phi^{+}.
\end{equation}
In \Cref{sec:monoid}, we study the submonoids $\Phi^{\uni}$ and
$\Phi^{\lcc}$ of $\Phi^+$ given by cyclotomic generating functions which are
unimodal or log-concave with no internal zeros, respectively.

Testing if a polynomial $f \in \mathbb{Z}_{\geq 0}[q]$ is in $\Phi^+$ may be done by
repeatedly dividing off cyclotomic polynomial factors $\Phi_n(q)$. Note that
we have the elementary bounds $\sqrt{n/2} \leq \deg \Phi_n(q) \leq n$, so there are only a
finite number of basic CGFs of each given degree.

The main reason to study CGFs as a family of polynomials is that they
are already prevalent in the literature.  In addition to $q$-binomial
coefficients, the standard $q$-analogues of $n!$ using inversions or
the major index statistic on permutations and their generalizations to
arbitrary words corresponding to $q$-multinomial generating
functions are CGFs using MacMahon's classic formulas.  Here are a
(Catalan) number of examples from the literature. We emphasize that this
list is not exhaustive.

\begin{enumerate}
\item Length generating functions of Weyl groups and their parabolic
quotients \cite{b-b}.  
\item The Hilbert series of all finite-dimensional quotient rings of
the form
  \[ R \coloneqq B/(\theta_1, \ldots, \theta_m) \]
where $\theta_1, \ldots, \theta_m$ is a homogeneous system of parameters in the
polynomial ring $B = \bk[x_1, \ldots, x_m]$, where $\deg(\theta_i) = a_i$
and $\deg(x_i) = b_i$.  The ring of invariants of a finite reflection group has
such a Hilbert series.   See \Cref{sub:hsop} for references and
further discussion.  
\item A $q$-analogue of Cayley's formula coming from the $t=q$ case of the diagonal
harmonics, namely $q^{\binom{n}{2}} \Hilb(DH_n; q, q^{-1}) =
[n+1]_q^{n-1}$ \cite[Thm. 4.2.4]{Haiman.2003}. 

\item A $q$-analogue of the Catalan numbers \cite{Chen-Wang-Wang.2008}.   
\item A $q$-analogue of the Fu{\ss}--Catalan numbers for irreducible well-generated complex reflection groups \cite[(3.2),~Thm.~25]{MR3363972}.
\item A $q$-analogue of Narayana numbers \cite[Thm. 1.10]{MR3884758}.  
\item A $q$-analogue of the Hook Length Formula for standard Young tableaux \cite[Prop.~4.11]{stanley.1979},
\item A $q$-analogue of the Hook Content Formula for semi-standard
Young tableaux \cite[Thm.~7.21.2]{ec2}.
\item A $q$-hook length formula for linear extensions
of forests \cite{MR1022316}.

\item A $q$-analogue of the Weyl dimension formula for highest weight
modules of semisimple Lie algebras \cite[Lem.~2.5]{MR1262215}.

\item The $q$-analogues of the formulas enumerating alternating sign
matrices \cite[p.171]{MR3156682}, cyclically symmetric plane partitions
\cite{MR652647}, or totally symmetric plane partitions
\cite{MR2775659}.

\item Rank generating functions of Bruhat intervals $[\mathrm{id},w]$ for
permutations $w$ indexing smooth Schubert varieties in the complete
flag manifolds \cite{gasharov97,GR2000}.  Similar results hold in
other types as well.  See \cite{MR3406513} for an extensive overview
and recent results.

\item The statistic $\baj - \inv$ appeared in the context of extended
affine Weyl groups and Hecke algebras in the work of Iwahori and
Matsumoto in 1965 \cite{Iw.M}.  It is the Coxeter length generating function
restricted to coset representatives of the extended affine Weyl group
of type $A_{n-1}$ mod translations by coroots.  Stembridge and Waugh
\cite[Remarks 1.5 and 2.3]{MR1692145} give a careful overview of this
topic and further results.  In particular, they prove the
corresponding $q$-analogue of $n!$ is a cyclotomic generating function.
See Zabrocki \cite{math/0310301} for the nomenclature and
\cite[\S4]{BKS-asymptotics} for an asymptotic description.

\item Rank generating functions of Gaussian posets and $d$-complete
posets, with connections to Lie theory and order polynomials of
posets, \cite{MR782055,MR4038788,MR1262215}.  In recent work, Hopkins
has explored general properties of posets with order polynomial
product formulas in the context of cyclic sieving and other good
dynamical behavior such as promotion and rowmotion
\cite{Hopkins.2023}.  Furthermore, sometimes there is an associated
cyclic sieving phenomenon associated to these order polynomials.  These
well behaved order polynomials are often cyclotomic generating
functions.  See also Stanton's online notes \cite{Stanton.note} for
conjectures related to CGFs generalizing rank generating functions of
Gaussian posets, which he calls ``fake Gaussian sequences''.

\end{enumerate}

Classes of polynomials similar to the class of cyclotomic generating
functions have been studied for roughly a century in other contexts. A
polynomial $f \in \bC[q]$ of degree $n$ is \textbf{self-inversive} if
$f(q) = \alpha q^n f(q^{-1})$ where $|\alpha| = 1$. Equivalently, the
zeros of $f$ are symmetric about the unit circle. Self-inversive
polynomials were studied by A.~Cohn \cite{Cohn22} a century ago and by
Bonsall--Marden \cite{BonsallMorris} in the 1950s; they related them
to the complex roots of $f'$.

Kedlaya \cite{Kedlaya} and Hwang--Zacharovas \cite{HwangZacharovas}
refer to polynomials with roots all on the unit circle as
\textbf{root-unitary}. Note that basic CGFs are in particular
root-unitary with real coefficients.  We will review key details from
\cite{HwangZacharovas} in \Cref{sec:background} since they pertain to
our focus on basic CGFs.  In particular, they gave an elegant formula
for the cumulants of the real-valued discrete random variables whose
probability generating functions are basic CGFs and a test for
asymptotic normality using the fourth cumulants.  More recently, there
has been interest in certain polynomials associated to numerical
semigroups that have cyclotomic factorizations
\cite{MR3483156,MR4374779} and identifying criteria for central limit
theorems for polynomials based on the geometry of their roots in the
complex plane \cite{heerten2023probabilistic,Michelen-Sahasrabudhe.2019,michelen2019central}.

Another particularly well-known class of polynomials consists of
elements of $\bR_{\geq 0}[q]$ with all real roots \cite{Branden2015,Brenti1994,Pitman1997,MR1110850}. Stirling numbers of
the second kind are the motivating example \cite{Harper}. A famous
asymptotic characterization of their coefficients is due to Bender.

\begin{theorem}[{\cite[Thm.~2]{Bender}; see also \cite{Harper}}]\label{thm:bender}
  If $\cX_n$ is a sequence of real-valued discrete random variables
  whose probability generating functions are polynomials $f_n(q)$
  with all real roots and standard deviation $\sigma_n \to \infty$,
  then $\cX_n$ is asymptotically normal.
\end{theorem}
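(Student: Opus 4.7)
The plan is to realize each $\cX_n$ as a sum of independent Bernoulli random variables and then invoke the Lindeberg--Feller central limit theorem, whose hypothesis becomes trivial because Bernoulli variables are bounded.

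First I would analyze the root structure of $f_n$. Since $f_n$ is the probability generating function of $\cX_n$ up to the normalization $f_n(1)$, its coefficients are nonnegative reals. Combined with the hypothesis that all roots are real, this forces every root to lie in $(-\infty,0]$: for any $r>0$ one has $f_n(r)>0$ unless $f_n$ is identically zero. Writing $f_n(q)=c\prod_{i=1}^{d_n}(q+r_i)$ with $r_i\geq 0$, normalization gives
\[
  \frac{f_n(q)}{f_n(1)} = \prod_{i=1}^{d_n}\frac{q+r_i}{1+r_i} = \prod_{i=1}^{d_n}\bigl(p_i q + (1-p_i)\bigr),
\]
where $p_i = 1/(1+r_i)\in(0,1]$ (interpreting $r_i=0$ as $p_i=1$). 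Each factor is the probability generating function of a Bernoulli$(p_i)$ variable, so $\cX_n$ is equal in distribution to $B^{(n)}_1 + \cdots + B^{(n)}_{d_n}$, where the $B^{(n)}_i$ are independent Bernoulli$(p_i)$.

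Next I would write down the standard moments: $\mu_n = \bE[\cX_n] = \sum_i p_i$ and $\sigma_n^2 = \Var(\cX_n) = \sum_i p_i(1-p_i)$. To prove asymptotic normality of $(\cX_n-\mu_n)/\sigma_n$, I would verify Lindeberg's condition for the triangular array $\{(B^{(n)}_i - p_i)/\sigma_n : 1\leq i\leq d_n\}$: for every $\epsilon>0$,
\[
  L_n(\epsilon) \coloneqq \frac{1}{\sigma_n^2}\sum_{i=1}^{d_n}\bE\!\left[(B^{(n)}_i - p_i)^2\,\mathbbm{1}_{|B^{(n)}_i - p_i|>\epsilon\sigma_n}\right].
\]
Since $|B^{(n)}_i - p_i|\leq 1$ deterministically, the indicator vanishes as soon as $\epsilon\sigma_n\geq 1$. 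The hypothesis $\sigma_n\to\infty$ then gives $L_n(\epsilon)=0$ for all sufficiently large $n$, and Lindeberg--Feller yields convergence in distribution to $N(0,1)$.

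The proof has no real obstacle: the only potentially delicate step is ruling out positive roots and handling roots at $0$, but both follow immediately from nonnegativity of the coefficients. The deeper point is that the real-rootedness hypothesis is exactly what is needed to express $\cX_n$ as a sum of independent bounded summands, after which classical central limit machinery closes the argument with minimal work.
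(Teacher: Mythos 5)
Your proof is correct, and it is in fact the classical argument (this is essentially Harper's original proof, which the paper cites alongside Bender). Note, however, that the paper itself does not prove this theorem—it is stated as a known result with references—so there is no in-paper proof to compare against. The key steps are all sound: nonnegative coefficients rule out positive real roots (since $f_n(r) > 0$ for $r > 0$) and force $f_n(1) > 0$; the factorization into linear factors $q + r_i$ with $r_i \geq 0$ therefore exhibits $f_n(q)/f_n(1)$ as a product of Bernoulli probability generating functions; and the resulting triangular array of independent, uniformly bounded summands satisfies Lindeberg's condition trivially once $\sigma_n \geq 1/\epsilon$. One can even note that $|B_i^{(n)} - p_i| < 1$ strictly for $p_i \in (0,1)$ and equals $0$ when $p_i = 1$, so the indicators vanish identically for all large $n$, not merely in expectation.

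One point worth making explicit, since the theorem is stated for ``real-valued'' random variables: for the probability generating function to be a polynomial at all, $\cX_n$ must be supported on a finite set of nonnegative integers, so the integer-valuedness your decomposition relies on is automatic. Your approach has the virtue of being completely self-contained modulo Lindeberg--Feller; an alternative route in the same spirit is to observe that a finite sum of independent bounded random variables with $\sigma_n \to \infty$ has standardized cumulants of order $d \geq 3$ tending to zero (since $|\kappa_d^{\cX_n}| = O(d_n)$ while $\sigma_n^d$ grows), which is the cumulant-based criterion (\Cref{cor:cumulants}) the present paper favors throughout; both arguments close in one line once the Bernoulli decomposition is in hand.
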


In contrast to \Cref{thm:bender}, CGF polynomials and their associated
random variables have much more complex limiting behavior.  In
\cite{BS-asymptotics}, we initiated the study of the metric space of
all standardized CGF distributions in the L\'evy metric and their
asymptotic limits.  For some families of CGFs, there are statistics
that completely characterize their limiting behavior.  Asymptotic
normality is one common occurrence.  While finding a complete
description of the closure of all CGF distributions is still an open
problem \cite[Open Problem 1.19]{BS-asymptotics}, we did show that all
uncountably many \textbf{DUSTPAN distributions} (\textbf{d}istributions
associated to a \textbf{u}niform \textbf{s}um for $\mathbf{t}$ \textbf{p}lus \textbf{a} \textbf{n}ormal distribution) can
occur on the boundary.  Furthermore, many more multimodal limiting
distributions are possible, due to the following construction. Given a
CGF $f(q)$ and positive integer $N$, note that $(1+q^N) f(q)$ remains
a CGF. If $N$ is larger than the degree of $f(q)$, the result is a CGF
with two disjoint copies of the distribution of $f(q)$. In this way,
valid limits may be ``duplicated'' in a fractal pattern. The unimodal
and log-concave submonoids of $\Phi^{+}$ are not subject to this
construction.  We expect the unimodal basic CGFs will have the
simplest limiting behavior.

Towards addressing the problem of characterizing when a limiting
sequence of standardized CGF distributions in the L\'evy metric
approaches the standard normal distribution $\cN(0,1)$, we present a new
criterion in terms of the multisets in the rational presentation of
the CGF polynomials.  It is these multisets which have combinatorial
significance in applications.  We give a more complete
characterization in the special case of polynomials which are products
of $q$-integers, see \Cref{thm:q_prod_an}.  This approach builds on
the work of Hwang--Zacharovas \cite{HwangZacharovas}, but emphasizes
the role of the multisets of $q$-integers in the numerators.  It also
generalizes the work of Diaconis \cite[pp.128-129]{MR964069} who
showed that the coefficients of a sequence of $q$-binomials
$\binom{n}{k}_q$ are asymptotically normal provided both $k, n-k \to
\infty$.  See \Cref{sec:asymcgfs} on asymptotic considerations for the
proof.

\begin{theorem}\label{thm:diaconis}
  Let $f_N(q) = \prod_{a \in a^{(N)}} [a]_q/\prod_{b \in b^{(N)}}
[b]_q \in \Phi^{+}$ for $N=1,2,\dots$ be a sequence of basic
cyclotomic generating functions expressed in terms of their associated multisets
$a^{(N)}, b^{(N)}$.  For each $N$, let $\cX_N$ be the corresponding
CGF random variable with $\bE[q^{\cX_N}] = f_N(q)/f_N(1)$. If
  \begin{equation}\label{eq:diaconis.1}
\limsup_{N \to \infty} \frac{\sum_{b \in b^{(N)}} (b^2 - 1)} {\sum_{a
\in a^{(N)}} (a^2 - 1)} < 1 \end{equation}
and \begin{equation}\label{eq:diaconis.1b} \lim_{N \to \infty} \sum_{a
\in a^{(N)}} \left(\frac{a}{\max a^{(N)}}\right)^{4}  = \infty,
\end{equation}
then $\cX_1, \cX_2, \ldots$ is asymptotically normal.
\end{theorem}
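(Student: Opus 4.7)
The plan is to reduce the statement to the Hwang--Zacharovas fourth-cumulant criterion for root-unitary random variables recalled in \Cref{sec:background}, which says that $\sigma_N \to \infty$ together with $\kappa_4(\cX_N)/\sigma_N^4 \to 0$ suffices for asymptotic normality.

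For the cumulants themselves, I would use the probabilistic form in \Cref{thmdef:cgf_equivalences} to write $\cX_N$ as a signed independent combination of discrete uniforms, combined with the standard expansion $\kappa_r(\cU_a) = \frac{B_r}{r}(a^r - 1)$ obtained by expanding $\log\bigl((e^{at}-1)/(a(e^t-1))\bigr)$ in Bernoulli numbers. Additivity of cumulants over independent sums then gives
\[
\kappa_r(\cX_N) = \frac{B_r}{r}\left(\sum_{a \in a^{(N)}}(a^r - 1) - \sum_{b \in b^{(N)}}(b^r - 1)\right)
\]
for even $r \geq 2$, with all odd cumulants of order at least $3$ vanishing by palindromicity of basic CGFs.

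For the estimates, set $A_N := \max a^{(N)}$. Hypothesis \eqref{eq:diaconis.1} yields $\sigma_N^2 \geq c \sum_{a \in a^{(N)}} a^2$ for some $c > 0$ and $N$ large. A quick appeal to unique cyclotomic factorization in \Cref{thmdef:cgf_equivalences} forces $\max b^{(N)} \leq A_N$, since the top cyclotomic factor $\Phi_{\max b^{(N)}}$ in the denominator must be canceled by a numerator factor, so some $a_j$ is a multiple of $\max b^{(N)}$. Hence $x^4 \leq A_N^2 x^2$ for every $x \in a^{(N)} \cup b^{(N)}$, and another application of \eqref{eq:diaconis.1} produces
\[
|\kappa_4(\cX_N)| \leq C\left(\sum_a a^4 + \sum_b b^4\right) \leq C A_N^2 \left(\sum_a a^2 + \sum_b b^2\right) \leq C' A_N^2 \sum_{a \in a^{(N)}} a^2.
\]
Combining gives $|\kappa_4(\cX_N)|/\sigma_N^4 \leq C''/\sum_{a \in a^{(N)}}(a/A_N)^2$, which tends to $0$ by \eqref{eq:diaconis.1b} via the pointwise inequality $(a/A_N)^2 \geq (a/A_N)^4$. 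The same inequality also forces $\sigma_N^2 \geq c A_N^2 \sum_a (a/A_N)^4 \to \infty$, so $\sigma_N \to \infty$.

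With both ingredients in hand, Hwang--Zacharovas finishes the argument. I expect the main technical hurdle to be the arithmetic observation $\max b^{(N)} \leq \max a^{(N)}$, on which the clean bound of $\sum_b b^4$ by $A_N^2 \sum_a a^2$ depends; the second subtlety is using \eqref{eq:diaconis.1} both to lower-bound $\sigma_N^2$ and to replace $\sum_b b^2$ by $\sum_a a^2$ without losing constants. Everything else is either a direct application of results already stated in the excerpt or a routine inequality.
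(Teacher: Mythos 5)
Your proof is correct, and it takes a genuinely different route from the paper. The paper's proof is structural: it writes $\cX_N + \cB_N = \cA_N$ in distribution (where $\cA_N = \sum_{a \in a^{(N)}} \cU_a$ and $\cB_N = \sum_{b \in b^{(N)}} \cU_b$), passes to the standardized characteristic-function identity
\[
\phi_{\cX_N^*}(t) = \frac{\phi_{\cA_N^*}\!\left(t/\sqrt{1-c_N^2}\right)}{\phi_{\cB_N^*}\!\left(t\,c_N/\sqrt{1-c_N^2}\right)}, \qquad c_N := \sigma_{\cB_N}/\sigma_{\cA_N},
\]
applies \Cref{thm:q_prod_an}(i) to deduce that $\cA_N$ (and, when needed, $\cB_N$) is asymptotically normal, and then runs a case analysis on $\lim c_N \in [0,1)$ together with a subsequence argument via the L\'evy metric. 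You instead apply the Hwang--Zacharovas fourth-cumulant criterion (\Cref{thm:4thcumulant.test}) \emph{directly} to $\cX_N$, using \eqref{eq:prod_cumulants} to compute $\kappa_4(\cX_N)$ exactly and then estimating $|\kappa_4|/\sigma_N^4$ via the chain $b \le \max a^{(N)}$ (which is \Cref{lem:cgf_ineqs}\eqref{part:ends}, and which you correctly re-derive from the cyclotomic factor $\Phi_{\max b^{(N)}}$), the pointwise bound $x^4 \le (\max a^{(N)})^2 x^2$, and the lower bound on $\sigma_N^2$ coming from \eqref{eq:diaconis.1}. Your calculation is sound: since every $a \in a^{(N)}$ satisfies $a \ge 2$, one has $a^2 - 1 \ge \tfrac{3}{4}a^2$ and $|a^{(N)}| \le \tfrac{1}{4}\sum_a a^2$, which turns \eqref{eq:diaconis.1} into both $\sigma_N^2 \gtrsim \sum_a a^2$ and $\sum_b b^2 \lesssim \sum_a a^2$, and the final quotient tends to $0$ via $(a/A_N)^2 \ge (a/A_N)^4$ and \eqref{eq:diaconis.1b}.

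The trade-offs: your approach is shorter and more computational, bypasses \Cref{thm:q_prod_an} entirely, and avoids both the subsequence argument and the split into the $c>0$ and $c=0$ regimes, treating them uniformly in one inequality chain; it also makes the respective roles of the two hypotheses completely explicit (one gives the lower bound on $\sigma_N^2$ and lets you absorb $\sum_b b^2$; the other makes the normalized quotient vanish). The paper's approach is more modular---\Cref{thm:q_prod_an} is a standalone classification of limits for products of $q$-integers that is reused elsewhere---and the explicit decomposition into independent uniform sums is the same mechanism the paper uses to study degenerate (Irwin--Hall) limits when \eqref{eq:diaconis.1} fails. Both are valid proofs of the theorem as stated.
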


The ratios in \eqref{eq:diaconis.1} are related to the variances of
the random variables obtained from sums of uniform random variables
associated to the numerators and denominators respectively.  The sum
in \eqref{eq:diaconis.1b} is derived using the fourth cumulant test
due to Hwang--Zacharovas.  Hence the appearance of the quadratic and
quartic terms. When the limiting ratio in \eqref{eq:diaconis.1} is $1$,
massive cancellation occurs and obscures the asymptotic behavior.
More careful analysis may still reveal limit laws, such as in the
\textbf{Irwin--Hall} case of \cite[Thm.~1.7]{BKS-asymptotics}.

\begin{example}
  Continuing the plane partition example, suppose we have an infinite
sequence of triples $(x_{N}, y_{N}, z_{N})$ for $N=1,2,\ldots$. For
ease of notation, we will just write $(x, y, z)$ for a general element
in the sequence.    In MacMahon's formula \eqref{eq:PP}, we see
intuitively that the numerator dominates the denominator, which is
essentially condition \eqref{eq:diaconis.1}.  More rigorously, routine
calculations show that the ratio \eqref{eq:diaconis.1} is the rational
function \[ \frac{g(x, y)}{g(x, y)+z \cdot (x+y+z)}, \qquad\text{where
}g(x, y) = (2x^2+3xy+2y^2-7)/6. \] When $x \leq y \leq z$, which may
be arranged without loss of generality, and $z \to \infty$, we use the
rational function above to observe \eqref{eq:diaconis.1} holds. Similarly we find that
\eqref{eq:diaconis.1b} holds if $y \to \infty$ as well. Hence with
nothing more than basic computer algebra systems and routine
calculations, we find that size on plane partitions in a box is
asymptotically normal if $\mathrm{median}(x, y, z) \to \infty$,
recovering a result from \cite{BS-asymptotics}.  Indeed, this result
is sharp in the sense that, when $\mathrm{median}(x, y, z)$ is bounded
and $z \to \infty$, the possible limits are Irwin--Hall
distributions; see \cite{BS-asymptotics}.  The approach in this example
to proving central limit theorems is a kindred spirit to Zeilberger's
``automatic central limit theorem generator'' \cite{MR2683231}.
\end{example}

\begin{remark}
  While \Cref{thm:diaconis} provides a precise asymptotic conclusion, \cite[Thm.~1.4]{michelen2019central} states the following quantitative bound after a lengthy technical argument. If $f(q)$ is a polynomial which is zero-free in the sector $\{z \in \mathbb{C} : |\arg(q)| < \delta\}$ and if $\cX$ is a random variable with $\bE[q^{\cX}] = f(q)/f(1)$, then 
  \begin{equation}\label{eq:MS.1}
    \sup_{t \in \bR} |\bP(\cX^* \leq t) - \bP(\cZ \leq t)| \leq \frac{C}{\delta \sigma},
  \end{equation}
  where $C$ is an absolute constant and $\cZ$ is the standard normal random variable. In the situation of \Cref{thm:diaconis}, $\delta^{-1} = \max a^{(N)}/2\pi$ and $12\sigma^2 = \sum_{a \in a^{(N)}} a^2 - \sum_{b \in b^{(N)}} b^2$, so \eqref{eq:MS.1} becomes
  \begin{equation}\label{eq:MS.2}
    \sup_{t \in \bR} |\bP(\cX^* \leq t) - \bP(\bZ \leq t)| \leq \frac{C_0 \max a^{(N)}}{\sqrt{\sum_{a \in a^{(N)}} a^2 - \sum_{b \in b^{(N)}} b^2}}.
  \end{equation}
  Using MacMahon's plane partition formula \eqref{eq:PP}, $\max a^{(N)} = x+y+z-1$ and $12\sigma^2 = xyz(x+y+z)$. The right-hand side in \eqref{eq:MS.2} in this case may then be replaced with $C_0 \sqrt{\frac{x+y+z}{xyz}}$. Note that this expression tends to $0$ if and only if $\median(x, y, z) \to \infty$.
\end{remark}

Towards addressing the problem of characterizing all limiting
distributions for CGF random variables, we note the following special property. Roughly speaking, it states that the converse of the Frech\'et--Shohat Theorem holds for standardized CGF distributions. A proof is given in \Cref{sec:analytic}.  The same statement holds with moments replaced by cumulants, and in fact, this paper focuses on the cumulants as the key quantities characterizing a CGF distribution.  

\begin{theorem}\label{thm:FS_iff}
  Let $\cX_1, \cX_2, \ldots$ be a sequence of random variables
  corresponding to cyclotomic generating functions. Then the sequence
  of standardized random variables $\cX_1^*, \cX_2^*, \ldots$
  converges in distribution if and only if for all $d \in \bZ_{\geq
  1}$, the limit of the $d^{th}$ moments of the random variables  
    \[ \mu_d := \lim_{N \to \infty} \mu_d^{\cX_N^*} \]
exists and is finite. In this case, $\cX_n^* \Rightarrow \cX$ where
$\mu_d = \mu_d^{\cX}$ is the $d^{th}$ moment of $\cX$, $\cX$ is determined by its moments, and the moment-generating function of $\cX$ is entire.
\end{theorem}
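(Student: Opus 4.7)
The plan is to pass to cumulants and exploit their special CGF structure. By probabilistic form (v) with independent summands and additivity of cumulants, $\kappa_d(\cX_N) = \sum_j \kappa_d(\cU_{a_j^{(N)}}) - \sum_j \kappa_d(\cU_{b_j^{(N)}})$ for $d \geq 2$. The cumulant generating function of $\cU_a$ equals $\log\frac{e^{ta}-1}{ta}-\log\frac{e^t-1}{t}$, and expanding via the Bernoulli identity $\log\frac{e^x-1}{x}=x/2+\sum_{k\geq 1}\frac{B_{2k}}{2k(2k)!}x^{2k}$ yields $\kappa_{2k}(\cU_a) = B_{2k}(a^{2k}-1)/(2k)$ and $\kappa_{2k+1}(\cU_a)=0$ for $k\geq 1$. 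The $-1$ contributions cancel pairwise because both multisets in the rational form share cardinality $m$, so
\[
  \kappa_{2k+1}(\cX_N^*) = 0\quad(k\geq 1),\qquad
  \kappa_{2k}(\cX_N^*) = \frac{B_{2k}\bigl(\sum_j a_j^{2k}-\sum_j b_j^{2k}\bigr)}{2k\,\sigma_N^{2k}}.
\]
The $k=1$ case recovers $12\sigma_N^2 = \sum a^2 - \sum b^2$, which I will use repeatedly.

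For the ($\Leftarrow$) direction, assume all moments $\mu_d^{\cX_N^*}$ converge to finite $\mu_d$. The moment--cumulant polynomial bijection gives $\kappa_d(\cX_N^*)\to\kappa_d^\infty$, with $\kappa_{2k+1}^\infty=0$ for $k\geq 1$. I plan to invoke the Fr\'echet--Shohat theorem once I show the $\mu_d$ are moments of a unique distribution, which I will achieve by establishing that the tentative MGF $M_\cX(t) := \exp\!\bigl(t^2/2+\sum_{k\geq 2}\kappa_{2k}^\infty t^{2k}/(2k)!\bigr)$ is entire. Using the classical bound $|B_{2k}|\leq 2(2k)!/(2\pi)^{2k}$ applied to the cumulant formula, I reduce entirety to controlling $(\sum a^{2k}-\sum b^{2k})/\sigma_N^{2k}$, leveraging the convergence hypothesis in each degree together with the relation $12\sigma_N^2=\sum a^2-\sum b^2$.

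For the ($\Rightarrow$) direction, assume $\cX_N^*\Rightarrow\cX$. Unit-variance tightness is automatic, so the task reduces to uniform integrability of $|\cX_N^*|^d$ at every $d$, i.e., $\sup_N\mu_{2d}^{\cX_N^*}<\infty$. I will use the identity $\cX_N^*+V_N^{(b)}\stackrel{d}{=}V_N^{(a)}$, where $V_N^{(a)},V_N^{(b)}$ denote the standardized independent sums of centered uniforms corresponding to $a^{(N)},b^{(N)}$, together with Jensen's inequality $M_{V_N^{(b)}}(t)\geq 1$ to obtain $M_{\cX_N^*}(t)\leq M_{V_N^{(a)}}(t)$, then apply a Rosenthal- or Bennett-type inequality to $V_N^{(a)}$, exploiting the structural identity $\Var(V_N^{(a)})=1+\Var(V_N^{(b)})$ to control the moments of $V_N^{(a)}$ in terms of data that remain bounded along a weakly convergent subsequence. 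Uniform integrability then upgrades weak convergence to moment convergence.

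The principal obstacle will be the cumulant growth bound needed for entirety of $M_\cX$ in ($\Leftarrow$): a priori, $(\sum a^{2k}-\sum b^{2k})/\sigma_N^{2k}$ can grow faster than any polynomial in $k$, so combining the Bernoulli decay $(2\pi)^{-2k}$, the pointwise boundedness (from moment convergence), and the cancellation forced by $12\sigma_N^2=\sum a^2-\sum b^2$ into a summable-in-$k$ bound is the crux of the argument. Once entirety is in hand, both Carleman's criterion and standard analyticity yield moment-determinacy, Fr\'echet--Shohat completes ($\Leftarrow$), and uniform integrability closes ($\Rightarrow$).
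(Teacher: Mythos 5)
Your outline identifies the right crux but never actually resolves it. For the ($\Leftarrow$) direction you explicitly flag the cumulant growth bound as ``the principal obstacle\ldots the crux of the argument,'' and indeed it is: pointwise convergence $\mu_d^{\cX_N^*}\to\mu_d$ tells you each limiting cumulant $\kappa_{2k}^\infty$ is finite, but by itself says nothing about how $\kappa_{2k}^\infty$ grows in $k$, so entirety of $M_\cX$ (or Carleman's condition) does not follow. The paper closes this gap with a uniform input you don't have: Hwang--Zacharovas' bound $\bE[e^{t\cX^*}] \leq \exp\left(\tfrac{3}{2}t^2 e^{2t/\sigma}\right)$ for $t\geq 0$, combined with the observation (\Cref{lem:cgf_ineqs}(iii)) that every nonconstant $f\in\Phi^+$ has $\sigma^2\geq 1/4$. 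This gives a bound uniform over the entire family $\cC_{\CGF}$, and hence (via Montel) normality of $\cC_{\CGF}$ as a family of entire functions (\Cref{thm:normal}); once one has that, both directions of the theorem fall out by passing to locally uniformly convergent subsequences and applying L\'evy continuity. Equivalently, the paper proves $\log\phi_{\cX^*}$ is analytic on the uniform disk $|z|<\sqrt 2$, which gives $|\kappa_{2k}^{\cX_N^*}| \leq C(2k)! 2^{-k}$ uniformly in $N$ --- exactly the summable-in-$k$ bound you need but never derive.

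The ($\Rightarrow$) direction as you sketch it has a further problem. The bound $M_{\cX_N^*}(t)\leq M_{V_N^{(a)}}(t)$ via Jensen is correct, but $\Var(V_N^{(a)}) = 1/(1-c_N^2)$ where $c_N=\sigma_{\cB_N}/\sigma_{\cA_N}$, and nothing in weak convergence of $\cX_N^*$ prevents $c_N\to 1$ (the ``massive cancellation'' regime noted in the introduction). In that regime $M_{V_N^{(a)}}(t)$ is unbounded, so the Jensen bound is vacuous, and the identity $\Var(V_N^{(a)})=1+\Var(V_N^{(b)})$ cannot rescue a Rosenthal-type estimate because $\Var(V_N^{(b)})$ is also blowing up. The paper avoids this entirely: its \Cref{thm:frechet_shohat_converse} deduces moment convergence and moment-determinacy from the normal-family argument, not from a direct moment inequality on the summands. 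So the key missing ingredient in both directions is a quantitative bound on $M_{\cX_N^*}$ (or on the radius of analyticity of $\log\phi_{\cX_N^*}$) that is uniform in $N$; without it, your proposal is an accurate roadmap of what must be shown but not a proof.
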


The paper is organized as follows.  In \Cref{sec:background}, we state
our notation and give background details from prior work.
In \Cref{sec:algebraic,sec:asymcgfs,sec:analytic}, we describe the algebraic, asymptotic and
analytic properties of basic CGFs, their associated random variables,
and characteristic functions.  In \Cref{sec:examples}, we examine the
monoid of basic cyclotomic generating functions and several of its
submonoids including unimodal and log-concave CGFs, CGFs whose
rational form satisfies the Gale order, and CGFs that come from
Hilbert series of polynomial rings modded out by homogeneous systems of
parameters.  We include several open problems about cyclotomic
generating functions for future work based on experimentation.

\section{Background}\label{sec:background}

\subsection{Cyclotomic polynomials}

The \textbf{cyclotomic polynomials} are defined for all positive
integers $n\geq 1$ by
\begin{equation}\label{eq:1}
\Phi_{n}(q) = \prod_{\substack{1\leq k\leq n\\ \gcd(k,n)=1}}  (q - e^{2\pi i k/n}).
\end{equation}
Each $n^{th}$ root of unity $e^{2\pi i k/n}$ is in the cyclic group generated by
$e^{2\pi i/d}$ for $d \in \{1,2,\dots ,n \}$ where $d=n/\gcd(k,n)$.  Thus, for $n\geq
1$, we have 
\begin{equation}\label{eq:2}
q^{n}-1 = \prod_{k=1}^{n} (q - e^{2\pi i k/n}) = \prod_{d |n}
\Phi_{d}(q).  
\end{equation}
Since $q^{n}-1=(q-1)(1+q+q^{2}+ \cdots + q^{n-1})$, we have
\begin{equation}\label{eq:q-integers}
[n]_{q} = 1+q+q^{2}+ \cdots + q^{n-1} = \prod_{1<d |n} \Phi_{d}(q).
\end{equation}

Let $\mu(n)$ be the classical M\"obius function.  The M\"obius
function satisfies the recurrence $\sum_{d \mid n} \mu (n/d)=0$ for
all $n>1$.  Therefore, by M\"obius inversion, we also have the
identities for cyclotomic polynomials indexed by $n>1$,
\begin{equation}\label{eq:mobius.inversion}
\Phi_n(q) = \prod_{d \mid n} (q^d - 1)^{\mu(n/d)} = \prod_{d \mid n}
[d]_q^{\mu(n/d)}.
\end{equation}
It follows by induction that $\Phi_n(q)$ is monic with integer
coefficients.

The cyclotomic polynomials in the product \eqref{eq:2} are the
irreducible factors of $(q^{n}-1)$ over the integers. For example,
$\Phi_{1}(q)=q-1$, $\Phi_{2}(q)=q+1$, $\Phi_{3}(q)=1+q+q^{2}$,
$\Phi_{4}(q)=q^{2}+1$, and $\Phi_{27}(q)=q^{18}+q^{9}+1$.  See
\cite{wiki:cyclotomic} or \cite[Sect. 7.7]{Cohn} for many beautiful
properties of cyclotomic polynomials including the formula
\[
\Phi_{p^{k}}(q)= \sum_{j=0}^{p-1} q^{j\, p^{k-1}}.
\]
Also, $\Phi_{p}(q)=1+q + q^{2}+\cdots + q^{p-1}=[p]_{q}$ if and only if
$p$ is prime.  Note, $\Phi_{p^{k}}(1)=p$, otherwise if $n>1$ is not a
prime power, then $\Phi_{n}(1)=1$.  The number of complex roots and
therefore the degree of $\Phi_{n}(q)$ is given by Euler's totient
function, $\varphi(n)$, so for $n\geq 3$ the degree of $\Phi_{n}(q)$
is even and bounded by $\sqrt{n/2}\leq \varphi(n)\leq n-1$.  For
$n\geq 2$, the constant term is $\Phi_{n}(0)=1$, and the coefficients are
\textbf{palindromic} in the sense that $\Phi_{n}(q) = q^{\varphi (n)}
\Phi_{n}(q^{-1})$.

\subsection{Probabilistic generating functions and cumulants}\label{sub:prob.and.cum}

We now briefly review the background from probability related to
CGFs.  See \cite{MR1324786} for more details or
\cite[\S2]{BKS-asymptotics} for a review aimed at a combinatorial
audience.

The \textbf{probability generating function} of a discrete random
variable $\cX$ supported on $\bZ_{\geq 0}$ is
\[
G_{\cX}(q) \coloneqq \bE[q^{\cX}] = \sum_{k=0}^\infty P(\cX = k) q^k.
\]
If $\cX$ is a nonnegative-integer-valued statistic on a 
set $S$ which is sampled uniformly, then the probability generating
function of $\cX$ as a random variable is the same as the ordinary
generating function of $\cX$, up to a scale factor:
\[
G_{\cX}(q) = \frac{1}{\#S} \sum_{s \in S} q^{\cX(s)}.
\]
The \textbf{moment generating function} of $\cX$ is obtained by
substituting $q=e^{t}$ above so 
 \[ M_{\cX}(t) \coloneqq \bE[e^{t\cX}] = G_{\cX}(e^t) = \sum_{d=0}^\infty \mu_d \frac{t^d}{d!},
\]
where $\mu_d \coloneqq \bE[\cX^d]$ is the $d$th \textbf{moment}. The
\textbf{central moments} of $\cX$ are $\alpha_d \coloneqq \bE[(\cX -
\mu)^d]$, where $\mu = \mu_1$ is the mean and $\alpha_2 = \sigma^2$ is
the variance. The \textbf{characteristic function} of $\cX$ is
\[
\phi_{\cX}(t) \coloneqq \bE[e^{it\cX}] = M_{\cX}(it) = G_{\cX}(e^{it}).
\]
The characteristic function, for $t \in \bR$, exists for all random
variables and determines the distribution of $\cX$. While moment
generating functions in general are less well-behaved, all of the
moment generating functions we will encounter converge in a complex
neighborhood of $0$ and the distributions will be determined by their
moments.

The \textbf{cumulant generating function}, also known as the
\textbf{second characteristic function}, of $\cX$ is $\log
\phi_{\cX}(t) = \log M_{\cX}(it)$. The \textbf{cumulants} $\kappa_1,
\kappa_2, \ldots$ of $\cX$ are defined to be the coefficients of the
related exponential generating function \[ K_{\cX}(t) \coloneqq \log
M_{\cX}(t) = \log \bE[e^{t\cX}] = \sum_{d=1}^\infty \kappa_d
\frac{t^d}{d!}. \] Cumulants and moments are polynomials in each other
and are interchangeable for many purposes, though cumulants generally
have more convenient formal properties. For example, for independent
random variables $\cX$ and $\cY$, $\kappa_d^{\cX+\cY} = \kappa_d^{\cX}
+ \kappa_d^{\cY}$ for all positive integers $d$.

The normal distribution with mean $\mu$ and standard deviation
$\sigma$, denoted $\cN(\mu , \sigma )$ is central in this work.  It is
the unique distribution with cumumlants $\kappa_{1}=\mu,
\kappa_{2}=\sigma^{2}$, and $\kappa_{d}=0$ for all positive integers
$d\geq 3$.

The cumulants of random variables associated to cyclotomic generating
functions have the following simple, explicit form due to
Hwang--Zacharovas.  This builds on work of Chen--Wang--Wang
\cite[Thm. 3.1]{Chen-Wang-Wang.2008} and Sachkov
\cite[\S1.3.1]{MR1453118}. From their formula it is easy to derive the
formula for the moments and central moments of CGF random variables as
explained in \cite[\S2.3]{BKS-asymptotics}.

\begin{theorem}\cite[\S4.1]{HwangZacharovas}\label{thm:prod_cumulants}
  Suppose $\{a_1, \ldots, a_m\}$ and $\{b_1, \ldots, b_m\}$ are
  multisets of positive integers such that
  \[ f(q) = \frac{\prod_{k=1}^m [a_k]_q}{\prod_{k=1}^m [b_k]_q}
    =\sum_{k=0}^{n} c_k q^k
    \in
    \Phi^{+}, \]
  so in particular each $c_k \in \bZ_{\geq 0}$ and $n=\sum a_{k}-b_{k}\geq 0$.
  Let $\cX$ be a discrete random variable with
  $\bP[\cX=k]=c_k/f(1)$. Then the $d$th
  cumulant of $\cX$ is
  \begin{equation}\label{eq:prod_cumulants}
    \kappa_d^{\cX} =  \frac{B_d}{d} \sum_{k=1}^m (a_k^d - b_k^d),
  \end{equation}
  where $B_d$ is the $d$th Bernoulli number (with $B_1 =
  \frac{1}{2}$).
  Moreover, the $d$th central moment of $\cX$ is
  \begin{equation}\label{eq:prod_central_moments}
    \alpha_d = \sum_{\substack{|\lambda|= d \\ \text{has all parts even}}}
      \frac{d!}{z_\lambda}\, \prod_{i=1}^{\ell(\lambda)}
        \frac{B_{\lambda_i}}{\lambda_i!}
        \left[\sum_{k=1}^m \left(a_k^d - b_k^d\right)\right],
  \end{equation}
  and the $d$th moment of $\cX$ is
  \begin{equation}\label{eq:prod_moments}
    \mu_d = \sum_{\substack{|\lambda| = d \\ \text{has all parts either}
      \\ \text{even or size $1$}}}
      \frac{d!}{z_\lambda} \, \prod_{i=1}^{\ell(\lambda)}
        \frac{B_{\lambda_i}}{\lambda_i!}
        \left[\sum_{k=1}^m \left(a_k^d - b_k^d\right)\right].
  \end{equation}
\end{theorem}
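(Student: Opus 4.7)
The plan is to reduce everything to two ingredients: the distributional identity for $\cX$ coming from the probabilistic form in Theorem/Definition~\ref{thmdef:cgf_equivalences}, and an explicit computation of the cumulants of the discrete uniform random variables $\cU_a$. For a basic CGF we have $\alpha=1$ and $\beta=0$, so that form reads
\[
\cX + \cU_{b_1} + \cdots + \cU_{b_m} \stackrel{d}{=} \cU_{a_1} + \cdots + \cU_{a_m},
\]
with all summands independent. Cumulants are additive on sums of independent random variables, so
\[
\kappa_d^{\cX} = \sum_{k=1}^m\bigl(\kappa_d^{\cU_{a_k}} - \kappa_d^{\cU_{b_k}}\bigr),
\]
and everything reduces to computing $\kappa_d^{\cU_a}$.

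To obtain $\kappa_d^{\cU_a}$, I would write the moment generating function as
\[
M_{\cU_a}(t) = \frac{1}{a}\cdot\frac{e^{at}-1}{e^t-1},
\]
so that $K_{\cU_a}(t) = F(at) - F(t)$ where $F(t)\coloneqq \log\bigl((e^t-1)/t\bigr)$. Differentiating and using the standard Bernoulli generating function $t/(1-e^{-t}) = \sum_{d\ge 0} B_d\, t^d/d!$ with the convention $B_1 = 1/2$, a short manipulation gives $F'(t) = e^t/(e^t-1) - 1/t = \sum_{d\ge 1} B_d\, t^{d-1}/d!$. Integrating term by term yields
\[
F(t) = \sum_{d\ge 1} \frac{B_d}{d}\cdot\frac{t^d}{d!},
\qquad\text{hence}\qquad
\kappa_d^{\cU_a} = (a^d-1)\,\frac{B_d}{d}.
\]
Substituting back into the additivity identity, the $-1$'s cancel and I obtain \eqref{eq:prod_cumulants}.

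For the central moment and moment formulas, the key step is the classical exponential (set-partition) formula: expanding $M_{\cX}(t) = \exp K_{\cX}(t)$ yields
\[
\mu_d = \sum_{\lambda \vdash d}\frac{d!}{z_\lambda}\prod_{i=1}^{\ell(\lambda)}\frac{\kappa_{\lambda_i}}{(\lambda_i-1)!},
\]
after the standard bookkeeping identity $z_\lambda \prod_i (\lambda_i-1)! = \prod_i \lambda_i! \prod_j m_j!$. Plugging in \eqref{eq:prod_cumulants} for $\kappa_{\lambda_i}$ gives \eqref{eq:prod_moments}, and the vanishing $B_r=0$ for odd $r\ge 3$ restricts the sum to partitions whose parts are either even or equal to $1$. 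The central moment formula follows from the same computation applied to $\cX - \mu$, for which $\kappa_1 = 0$ (so parts of size $1$ are excluded) while $\kappa_d$ agrees with $\kappa_d^{\cX}$ for $d\ge 2$, leaving only partitions with all parts even.

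The only real obstacle is the cumulant-of-uniform calculation in the middle paragraph: one needs to match conventions carefully so that the $B_1 = 1/2$ branch of the Bernoulli series appears (rather than $-1/2$), since this is what makes the mean $\kappa_1^{\cU_a} = (a-1)/2$ come out correctly and produces the uniform formula $(a^d-1)B_d/d$ valid for all $d \ge 1$. Once that is in place the rest is formal manipulation with the exponential formula, and the vanishing of odd Bernoulli numbers automatically delivers the parity restrictions stated in \eqref{eq:prod_central_moments} and \eqref{eq:prod_moments}.
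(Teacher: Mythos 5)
Your derivation is correct and self-contained; the paper only cites this theorem to Hwang--Zacharovas~\cite[\S4.1]{HwangZacharovas} and does not supply an internal proof, so there is no in-paper argument to compare against. Your three-step reduction is sound: the probabilistic form~(v) of Theorem/Definition~\ref{thmdef:cgf_equivalences} specialized to basic CGFs ($\alpha = 1$, $\beta = 0$) gives the distributional identity; additivity of cumulants over independent sums reduces everything to computing $\kappa_d^{\cU_a}$; and writing $K_{\cU_a}(t) = F(at) - F(t)$ with $F(t) = \log\bigl((e^t-1)/t\bigr) = \sum_{d \geq 1}\frac{B_d}{d}\frac{t^d}{d!}$ correctly matches the $B_1 = +\tfrac{1}{2}$ convention, giving $\kappa_d^{\cU_a} = (a^d-1)B_d/d$ (so $\kappa_1^{\cU_a} = (a-1)/2$ and $\kappa_2^{\cU_a} = (a^2-1)/12$, as they must be). The bookkeeping identity $z_\lambda \prod_i(\lambda_i-1)! = \prod_i \lambda_i!\,\prod_j m_j!$ converting the set-partition exponential formula into the $d!/z_\lambda$ form is also right, and the parity restrictions in \eqref{eq:prod_central_moments} and \eqref{eq:prod_moments} then follow from $B_r = 0$ for odd $r \geq 3$, with $\kappa_1 = 0$ additionally excluding parts of size $1$ in the centered case. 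One small remark on the target display: as printed, \eqref{eq:prod_central_moments} and \eqref{eq:prod_moments} show $a_k^d - b_k^d$ inside the product over $i$, whereas the substitution you correctly carry out, $\kappa_{\lambda_i} = \frac{B_{\lambda_i}}{\lambda_i}\sum_k\bigl(a_k^{\lambda_i}-b_k^{\lambda_i}\bigr)$, produces exponent $\lambda_i$ in the $i$th factor; the displayed $d$ there is evidently a typo for $\lambda_i$, and your derivation gives the intended formula.
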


\begin{definition}
  Let $\cX_1, \cX_2, \ldots$ and $\cX$ be real-valued random variables
  with cumulative distribution functions $F_1, F_2, \ldots$ and $F$,
  respectively. We say $\cX_1, \cX_2, \ldots$ \textbf{converges in distribution}
  to $\cX$, written $\cX_n \Rightarrow \cX$, if for all $t \in \bR$ at
  which $F$ is continuous we have
    \[ \lim_{n \to \infty} F_n(t) = F(t). \]
\end{definition}

For any real-valued random variable $\cX$ with mean $\mu$ and variance
$\sigma^2 > 0$, the corresponding \textbf{standardized random variable}
is \[ \cX^* \coloneqq \frac{\cX-\mu}{\sigma}. \] Observe that $\cX^*$
has mean $\mu^* = 0$ and variance ${\sigma^*}^2 = 1$. The moments and
central moments of $\cX^*$ agree for $d \geq 2$ and are given
by \[\mu_d^* = \alpha_d^* = \alpha_d/\sigma^d. \] Similarly, the
cumulants of $\cX^*$ are given by $\kappa_1^* = 0$, $\kappa_2^* = 1$,
and $\kappa_d^* = \kappa_d/\sigma^d$ for $d \geq 2$.

\begin{definition}\label{def:asymptotically.normal}
  Let $\cX_1, \cX_2, \ldots$ be a sequence of real-valued random variables.
  We say the sequence is \textbf{asymptotically normal} if
  $\cX_n^* \Rightarrow \cN(0, 1)$.
\end{definition}

We next describe two standard criteria for establishing
asymptotic normality or more generally convergence in
distribution of a sequence of random variables.

\begin{theorem}[L\'evy's Continuity Theorem, {\cite[Theorem~26.3]{MR1324786}}]\label{thm:levy}
  A sequence $\cX_1, \cX_2, \ldots$ of real-valued random variables
  converges in distribution to a real-valued random variable $\cX$
  if and only if, for all $t \in \bR$,
    \[ \lim_{n \to \infty} \bE[e^{it\cX_n}] = \bE[e^{it\cX}]. \]
\end{theorem}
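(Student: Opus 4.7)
The plan is to prove the two directions separately. The forward direction is essentially immediate from the definition of weak convergence, while the reverse direction is the substantive one and requires a tightness argument combined with a subsequence principle.

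Forward direction: suppose $\cX_n \Rightarrow \cX$. By the portmanteau characterization of weak convergence, $\bE[f(\cX_n)] \to \bE[f(\cX)]$ for every bounded continuous function $f \colon \bR \to \bC$. Applying this separately to $f(x) = \cos(tx)$ and $f(x) = \sin(tx)$, which are bounded and continuous for each fixed $t \in \bR$, and recombining gives $\phi_{\cX_n}(t) \to \phi_{\cX}(t)$ for each $t \in \bR$.

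Reverse direction, tightness: suppose $\phi_{\cX_n} \to \phi_{\cX}$ pointwise. The heart of the matter is to show that $\{\cX_n\}$ is a tight family. I would use the classical estimate
\[ \bP(|\cY| > 2/u) \;\leq\; \frac{1}{u}\int_{-u}^{u} (1 - \Re \phi_{\cY}(t))\, dt \]
valid for any random variable $\cY$ and any $u > 0$. This follows from Fubini's theorem applied to $\frac{1}{u}\int_{-u}^{u} (1 - \cos(tx))\, dt = 2(1 - \sin(ux)/(ux))$ together with the elementary bound $1 - \sin(y)/y \geq 1/2$ for $|y| \geq 2$. Since $\phi_{\cX}$ is continuous at $0$ with $\phi_{\cX}(0) = 1$, given $\epsilon > 0$ we can select $u > 0$ so that the right-hand side for $\cY = \cX$ is smaller than $\epsilon/2$. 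Pointwise convergence of $\phi_{\cX_n}$ to $\phi_{\cX}$, combined with the dominated convergence theorem on the bounded interval $[-u,u]$, shows that the analogous bound holds for $\cY = \cX_n$ up to $\epsilon$ once $n$ is large; the finitely many remaining $n$ are individually tight. Uniform tightness of $\{\cX_n\}$ follows.

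Reverse direction, conclusion: tightness together with Prokhorov's theorem (equivalently, Helly selection) guarantees that every subsequence of $\{\cX_n\}$ admits a further subsequence $\cX_{n_k}$ converging in distribution to some random variable $\cY$. Applying the already-proved forward direction to $\cX_{n_k} \Rightarrow \cY$ gives $\phi_{\cX_{n_k}}(t) \to \phi_{\cY}(t)$ for every $t$. Combined with the standing hypothesis $\phi_{\cX_{n_k}}(t) \to \phi_{\cX}(t)$, this forces $\phi_{\cY} = \phi_{\cX}$, and by the uniqueness theorem for characteristic functions $\cY$ and $\cX$ have the same distribution. The subsequence principle then upgrades this to $\cX_n \Rightarrow \cX$.

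The main obstacle is the tightness step: without it a subsequence could escape mass to infinity and there would be no honest limiting distribution. The tightness estimate is the single point where the continuity of $\phi_{\cX}$ at the origin (automatic in the present formulation, because $\cX$ is assumed to be a bona fide random variable) enters essentially; everything else is a mechanical assembly of the portmanteau theorem, Prokhorov's theorem, uniqueness of characteristic functions, and the subsequence principle.
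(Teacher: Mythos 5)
The paper does not prove this statement; it cites it as Theorem~26.3 of Billingsley's \emph{Probability and Measure} and uses it as a black box. Your proof is correct and is in fact the standard argument found in that reference: the forward implication via the portmanteau theorem, and the reverse via the truncation inequality $\bP(|\cY| > 2/u) \leq \frac{1}{u}\int_{-u}^{u}(1-\Re\phi_{\cY}(t))\,dt$ to establish tightness, followed by Prokhorov/Helly selection, uniqueness of characteristic functions, and the subsequence principle. The one place worth being explicit is that dominated convergence on $[-u,u]$ uses the uniform bound $|1 - \Re\phi_{\cX_n}(t)| \leq 2$, which you have implicitly; otherwise the argument is complete and matches the cited source.
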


\begin{theorem}[Frech\'et--Shohat Theorem,
  {\cite[Theorem~30.2]{MR1324786}}]\label{thm:moments}
  Let $\cX_1, \cX_2, \ldots$ be a sequence of real-valued random variables,
  and let $\cX$ be a real-valued random variable. Suppose
  the moments of $\cX_n$ and $\cX$ all exist and the moment
  generating functions all have positive radius of convergence. If
  \begin{equation}\label{eq:moments_criterion}
    \lim_{n \to \infty} \mu_d^{\cX_n} = \mu_d^{\cX} \hspace{.5cm} \forall
     d \in \bZ_{\geq 1},
  \end{equation}
  then $\cX_1, \cX_2, \ldots$ converges in distribution to $\cX$.
\end{theorem}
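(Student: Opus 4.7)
The theorem is an iff statement. The $(\Leftarrow)$ direction (moment convergence $\Rightarrow$ convergence in distribution) reduces to Fr\'echet--Shohat (\Cref{thm:moments}), while the harder $(\Rightarrow)$ direction requires the analytic structure of CGFs because converses to Fr\'echet--Shohat fail in general.

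For $(\Leftarrow)$, assume $\mu_d := \lim_N \mu_d^{\cX_N^*}$ exists for all $d$. The Hamburger moment problem produces a random variable $\cX$ with $\bE[\cX^d] = \mu_d$ since positive semidefiniteness of the finite-$N$ Hankel matrices passes to limits. To invoke \Cref{thm:moments} I need $\cX$ to be moment-determined with entire MGF, which I plan to establish via a growth bound $\mu_{2d} = O((Cd)^d)$. \Cref{thm:prod_cumulants} expresses central moments as polynomials in the standardized cumulants $\kappa_{2k}^{\cX_N^*} = (B_{2k}/(2k))\,P_{2k,N}/\sigma_N^{2k}$; combining $|B_{2k}| \sim 2(2k)!/(2\pi)^{2k}$ with the uniform power-sum bound described below yields the desired growth, which passes to $\mu_d$ and implies both Carleman's condition and the entireness of $M_\cX$.

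For $(\Rightarrow)$, assume $\cX_N^* \Rightarrow \cX$. By L\'evy (\Cref{thm:levy}), $\phi_{\cX_N^*}(t) \to \phi_\cX(t)$ pointwise on $\bR$. The plan is to upgrade this to convergence of the Taylor coefficients of $\log \phi_{\cX_N^*}$ at $0$, which by direct calculation equal (up to explicit constants) the standardized cumulants of \Cref{thm:prod_cumulants}; cumulant convergence then yields moment convergence via the same theorem. Combining \Cref{thmdef:cgf_equivalences}(v) with the identity $\phi_{\cU_a}(t) = e^{i(a-1)t/2}\sinc(at/2)/\sinc(t/2)$ yields the explicit forms
\[
\phi_{\cX_N^*}(t) = \prod_{k=1}^{m_N} \frac{\sinc(a_{k,N}\,t/(2\sigma_N))}{\sinc(b_{k,N}\,t/(2\sigma_N))}, \qquad -\log \phi_{\cX_N^*}(t) = \sum_{j \ge 1} \frac{\zeta(2j)}{j\pi^{2j}} \cdot \frac{P_{2j,N}}{(2\sigma_N)^{2j}}\, t^{2j}.
\]

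The crucial structural lemma is that every coefficient in this Taylor expansion is nonnegative: by M\"obius inversion, $P_{d,N} = \sum_{n \ge 1} J_d(n)\,\nu_n^{(N)}$ where $J_d$ is the Jordan totient and $\nu_n^{(N)} = |\{k : n \mid a_{k,N}\}| - |\{k : n \mid b_{k,N}\}| \ge 0$ by the cyclotomic form \Cref{thmdef:cgf_equivalences}(iii). Taking $n = \max_k a_{k,N}$, where in reduced form no $b_{k,N}$ can be a positive multiple (since $\max_k a_{k,N} > \max_k b_{k,N}$, or else $\Phi_{\max_k b_{k,N}}$ would produce a pole in $f_N$), gives $\nu^{(N)}_{\max_k a_{k,N}} \ge 1$; combined with $J_2(n) \ge 6n^2/\pi^2$ this yields the uniform bound $\max_k a_{k,N}/\sigma_N \le \pi\sqrt{2}$. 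Hence the first zero of $\phi_{\cX_N^*}$ satisfies $|t| \ge \sqrt{2}$ uniformly in $N$, and $-\log \phi_{\cX_N^*}$ is analytic on $|t| < \sqrt{2}$ with nonnegative coefficients. A standard Abel/tail argument then transfers pointwise convergence of $-\log \phi_{\cX_N^*}(t) \to -\log \phi_\cX(t)$ on a real interval into coefficientwise convergence: individual coefficients $A_{k,N}$ are bounded by $F_N(t_0)/t_0^{2k}$ at an interior point $t_0$, partial-sum tails are controlled by a factor $(t/t_0)^{2(K+1)}$, and diagonal extraction plus uniqueness of power series representations gives $A_{k,N} \to A_k$ for every $k$. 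The main obstacle is securing this uniform complex disk of analyticity, which rests on the Jordan totient bound and the reduced-form cyclotomic characterization; once cumulants converge, moments converge via \Cref{thm:prod_cumulants}, and the Carleman analysis from the $(\Leftarrow)$ direction identifies $\cX$ as moment-determined with entire MGF.
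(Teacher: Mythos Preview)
Your proposal does not address the stated theorem. \Cref{thm:moments} is the classical Fr\'echet--Shohat Theorem, a one-directional implication (moment convergence $\Rightarrow$ convergence in distribution) for general random variables whose limit is moment-determined. The paper quotes it from Billingsley as background and gives no proof of its own. Your write-up instead argues an ``iff'' statement specific to CGF random variables; you are attempting to prove \Cref{thm:FS_iff}, not \Cref{thm:moments}.

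If your intended target is \Cref{thm:FS_iff}, your route differs substantially from the paper's. The paper handles both directions through the normal-families machinery of \Cref{thm:normal}: it shows the family $\cC_{\CGF}$ of standardized CGF characteristic functions is locally bounded on $\bC$ (via the Hwang--Zacharovas estimate $\bE[e^{t\cX^*}] \leq \exp(\tfrac{3}{2}t^2 e^{2t/\sigma})$ together with $\sigma \geq 1/2$), applies Montel to extract entire subsequential limits, and then identifies the limit using L\'evy continuity and Lukacs' theorem on analytic characteristic functions; moment-determinacy and entireness of $M_\cX$ fall out immediately. You instead work directly with the Taylor expansion of $-\log\phi_{\cX_N^*}$: you use the cyclotomic-multiplicity inequality \eqref{eq:multiplicity_dominance} to show all coefficients are nonnegative, and the Jordan-totient lower bound $J_2(n) \geq 6n^2/\pi^2$ to obtain a uniform radius of analyticity $\sqrt{2}$ (this is essentially the content of the proposition on the $\sqrt{2}$ disk in \Cref{sec:analytic}, which you have rediscovered). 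Your passage from real pointwise convergence of $-\log\phi_{\cX_N^*}$ to coefficientwise convergence is plausible but needs one more ingredient you have not supplied: you must first secure a neighborhood of $0$ on which the limit $\phi_\cX(t) \neq 0$, so that $\log\phi_{\cX_N^*} \to \log\phi_\cX$ holds pointwise there; only then does the uniform coefficient bound plus subsequential extraction plus uniqueness of Taylor series close the argument. The paper's Montel approach is cleaner because it delivers entireness of $\phi_\cX$ in one stroke and avoids this real-analytic-to-coefficient step; your approach is more hands-on about the CGF structure and stays closer to the explicit cumulant formulas.
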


By \Cref{thm:levy}, we may test for asymptotic normality by
checking if the standardized characteristic functions tend pointwise to
the characteristic function of the standard normal. Likewise
by \Cref{thm:moments} we may instead perform the check on the
level of individual standardized moments, which is often referred to as the
\textbf{method of moments}. By the polynomial relationship between moments and
cumulants, we may further replace the moment condition
\eqref{eq:moments_criterion} with the cumulant condition
\begin{equation}\label{eq:cumulants_criterion}
  \lim_{n \to \infty} \kappa_d^{\cX_n} = \kappa_d^{\cX}.
\end{equation}
For instance, we have the following explicit criterion.  
  
\begin{corollary}\label{cor:cumulants}
  A sequence $\cX_1, \cX_2, \ldots$ of real-valued
  random variables on finite sets is asymptotically normal
  if for all $d \geq 3$ we have
  \begin{equation}\label{eq:cumulants_criterion2}
    \lim_{n \to \infty} \frac{\kappa^{\cX_n}_d}{(\sigma^{\cX_n})^d} = 0.
  \end{equation}
\end{corollary}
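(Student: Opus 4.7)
The plan is to deduce this directly from the cumulant reformulation of the Fr\'echet--Shohat theorem discussed in the paragraph preceding the corollary, once we observe that the hypothesis is precisely the statement that the standardized cumulants converge to those of $\cN(0,1)$.

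First I would record the cumulants of the standard normal: $\kappa_1^{\cN(0,1)} = 0$, $\kappa_2^{\cN(0,1)} = 1$, and $\kappa_d^{\cN(0,1)} = 0$ for all $d \geq 3$. By the definition of the standardized random variable $\cX_n^*$, its cumulants are $\kappa_1^{\cX_n^*} = 0$, $\kappa_2^{\cX_n^*} = 1$, and $\kappa_d^{\cX_n^*} = \kappa_d^{\cX_n}/(\sigma^{\cX_n})^d$ for $d \geq 3$. Thus the hypothesis \eqref{eq:cumulants_criterion2} is equivalent to
\[
  \lim_{n \to \infty} \kappa_d^{\cX_n^*} = \kappa_d^{\cN(0,1)} \quad \text{for all } d \geq 1,
\]
which is the cumulant analogue \eqref{eq:cumulants_criterion} of the Fr\'echet--Shohat hypothesis with limiting variable $\cX = \cN(0,1)$.

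Next I would verify the technical hypotheses of \Cref{thm:moments}. Since each $\cX_n$ is supported on a finite set, $\cX_n^*$ is bounded, so its moment generating function $M_{\cX_n^*}(t) = \bE[e^{t\cX_n^*}]$ is entire and in particular has positive radius of convergence. The normal distribution has entire moment generating function as well. Using the standard polynomial expressions that convert between the first $d$ moments and the first $d$ cumulants, convergence of cumulants to $\kappa_d^{\cN(0,1)}$ implies convergence of the moments $\mu_d^{\cX_n^*}$ to $\mu_d^{\cN(0,1)}$ for every $d \geq 1$.

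Having established both the convergence of all moments and the analyticity condition on the moment generating functions, I would invoke \Cref{thm:moments} to conclude that $\cX_n^* \Rightarrow \cN(0,1)$, which by \Cref{def:asymptotically.normal} is precisely the assertion that $\cX_1, \cX_2, \ldots$ is asymptotically normal. There is no real obstacle here; the only thing that requires a moment's care is the finiteness hypothesis, which is exactly what supplies the positive radius of convergence needed to apply Fr\'echet--Shohat. (In fact the same argument works whenever one has uniform control on the tails sufficient to guarantee the limiting moment problem is determinate, but the finite-support assumption is the cleanest way to secure this for the CGF applications of interest.)
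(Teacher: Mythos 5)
Your proposal is correct and follows exactly the route the paper intends: translate the hypothesis into convergence of standardized cumulants to those of $\cN(0,1)$, convert cumulant convergence to moment convergence via the polynomial relations, note that finite support gives the required positive radius of convergence, and invoke Fr\'echet--Shohat (\Cref{thm:moments}). The paper presents the corollary as an immediate consequence of the preceding paragraph without a separate proof, and your reconstruction supplies precisely the argument that paragraph is pointing to.
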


In fact, the converse of the Frech\'et--Shohat theorem holds for
cyclotomic generating functions. See
\Cref{thm:frechet_shohat_converse} below, which builds on \cite[Lem.~2.8]{HwangZacharovas}.  
Furthermore, we have the following simplified test for asymptotic normality due
to Hwang and Zacharovas.

\begin{theorem}\label{thm:4thcumulant.test}\cite[Thm. 1.1]{HwangZacharovas}
  Let $f_1(q), f_2(q), \ldots$ be a sequence of cyclotomic generating
functions. Let $\cX_1, \cX_2, \ldots$ be a corresponding sequence of
random variables with $\bE[q^{\cX_N}] = f_N(q)/f_N(1)$. Then, $\cX_1,
\cX_2, \ldots$ is asymptotically normal
if and only if the standardized fourth cumulants approach 0, 
  \begin{equation}\label{eq:cumulants_criterion3}
    \lim_{n \to \infty} \frac{\kappa^{\cX_n}_4}{(\sigma^{\cX_n})^4} = 0.
  \end{equation}
\end{theorem}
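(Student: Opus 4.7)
The plan is to use \Cref{thm:prod_cumulants} throughout. Write $S_d^{(N)} := \sum_k ((a_k^{(N)})^d - (b_k^{(N)})^d)$, so that $\kappa_d^{\cX_N} = (B_d/d)\,S_d^{(N)}$ and $\sigma_N^2 = S_2^{(N)}/12$. Consequently
\[ \frac{\kappa_d^{\cX_N}}{\sigma_N^d} \;=\; 12^{d/2}\, \frac{B_d}{d}\cdot \frac{S_d^{(N)}}{(S_2^{(N)})^{d/2}}. \]

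\emph{Forward direction.} Assume $\cX_N^* \Rightarrow \cN(0,1)$. Then \Cref{thm:FS_iff} yields convergence of every standardized moment of $\cX_N^*$ to the corresponding moment of $\cN(0,1)$, and the polynomial interchange between moments and cumulants upgrades this to convergence of every standardized cumulant to the cumulants of $\cN(0,1)$, which vanish for $d \geq 3$. In particular $\kappa_4^{\cX_N}/\sigma_N^4 \to 0$.

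\emph{Reverse direction.} Assume $\kappa_4^{\cX_N}/\sigma_N^4 \to 0$. By \Cref{cor:cumulants} it suffices to prove $\kappa_d^{\cX_N}/\sigma_N^d \to 0$ for every $d \geq 3$. For odd $d$, $B_d = 0$ and the quotient vanishes. For even $d = 2k \geq 6$, the claim reduces to $|S_{2k}^{(N)}|/(S_2^{(N)})^k \to 0$. The pointwise engine, valid for positive reals $a, b$ and $k \geq 2$, is
\[ |a^{2k}-b^{2k}| \;\leq\; k\, \max(a,b)^{2k-4}\, |a^4-b^4|, \]
obtained by factoring both differences through $|a^2-b^2|$ and comparing the resulting $k$-term and $2$-term sums of monomials in $a^2, b^2$. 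Integrality of $f_N \in \bZ[q]$ forces $\max_j b_j^{(N)} \leq M_N := \max_j a_j^{(N)}$, since $\Phi_{\max b_j}$ in the denominator must be cancelled by a $[a_j]_q$ in the numerator, so the bound applies uniformly with $M_N$ in place of $\max(a,b)$.

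To lift this from individual terms to the cancelling alternating sum $S_{2k}^{(N)}$ without triangle-inequality losses, I pass to the characteristic function. The probabilistic form of \Cref{thmdef:cgf_equivalences} gives
\[ |\phi_{\cX_N}(t)| \;=\; \prod_j \frac{\sinc(a_j^{(N)} t/2)}{\sinc(b_j^{(N)} t/2)},\qquad \sinc(x) := \frac{\sin x}{x}. \]
Combined with Euler's product $\sinc(x) = \prod_{n \geq 1}(1 - x^2/(n\pi)^2)$ and $\log\sinc(x) = -\sum_{\ell \geq 1}\zeta(2\ell)\,x^{2\ell}/(\ell\pi^{2\ell})$ this yields, after standardization,
\[ \log|\phi_{\cX_N^*}(t)| \;=\; -\frac{t^2}{2}\; -\; \sum_{\ell \geq 2} c_\ell\,\frac{S_{2\ell}^{(N)}}{(S_2^{(N)})^\ell}\, t^{2\ell}, \]
for explicit positive constants $c_\ell$ independent of $N$. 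Applying the pointwise inequality inside each factor of the Weierstrass product dominates the $\ell \geq 3$ summands by the $\ell = 2$ summand, which tends to $0$ by hypothesis. Pointwise convergence $\phi_{\cX_N^*}(t) \to e^{-t^2/2}$ together with L\'evy's continuity theorem (\Cref{thm:levy}) completes the proof.

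The main obstacle is exactly this lift from the pointwise bound to the alternating sum $S_{2k}^{(N)}$. A naive termwise triangle inequality controls $|S_{2k}^{(N)}|$ only by the unsigned sum $\sum_j |(a_j^{(N)})^{2k}-(b_j^{(N)})^{2k}|$, which can exceed $|S_4^{(N)}|$ by an uncontrolled factor owing to sign cancellations among the pairs $(a_j,b_j)$. Hwang--Zacharovas sidestep this by performing the estimate inside the Weierstrass product for $\sinc$, which encodes the cancellations analytically and removes any need for a combinatorial Gale-type dominance pairing of the two multisets.
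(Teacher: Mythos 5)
This statement is cited in the paper directly from Hwang--Zacharovas \cite[Thm.~1.1]{HwangZacharovas} and carries no internal proof, so there is no paper argument to compare against; what follows is an assessment of your proposal on its own merits.

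Your forward direction is correct and uses the paper's machinery appropriately: convergence $\cX_N^* \Rightarrow \cN(0,1)$ plus \Cref{thm:frechet_shohat_converse} (the converse of Fr\'echet--Shohat for CGFs) gives convergence of all standardized moments, hence of all standardized cumulants, and in particular $\kappa_4^{\cX_N}/\sigma_N^4 \to 0$.

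The reverse direction has a genuine gap, and you have located it precisely but not filled it. You need $S_{2\ell}^{(N)}/(S_2^{(N)})^\ell \to 0$ for every $\ell \ge 3$ given only $S_4^{(N)}/(S_2^{(N)})^2 \to 0$, where $S_d^{(N)} = \sum_j\bigl((a_j^{(N)})^d - (b_j^{(N)})^d\bigr)$. Your pointwise bound $|a^{2k}-b^{2k}| \le k\,\max(a,b)^{2k-4}|a^4-b^4|$ is correct, but as you yourself note, summing it termwise bounds $|S_{2k}^{(N)}|$ only by $kM_N^{2k-4}\sum_j|a_j^4-b_j^4|$, and the unsigned sum $\sum_j|a_j^4-b_j^4|$ can be arbitrarily larger than $S_4^{(N)}$ when some $a_j < b_j$ (which does occur, cf.\ \Cref{rem:cgf_not_pointwise}). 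The proposed fix --- ``applying the pointwise inequality inside each factor of the Weierstrass product'' --- does not actually escape this. Expanding $\log\sinc$ via the Euler product gives
\[
-\log\bigl|\phi_{\cX_N^*}(t)\bigr| - \frac{t^2}{2} \;=\; \sum_{\ell\ge 2} c_\ell\,\frac{S_{2\ell}^{(N)}}{(S_2^{(N)})^\ell}\,t^{2\ell},
\qquad c_\ell > 0,
\]
and any attempt to dominate the $\ell\ge 3$ terms by the $\ell=2$ term through the Weierstrass factorization still reduces, after summing over $j$ and over the Weierstrass index $n$, to controlling $\sum_j|a_j^4-b_j^4|$ by $S_4^{(N)}$: the sign-cancellation problem is invariant under moving the estimate inside or outside the product. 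What is actually needed is a quantitative self-bounding inequality of the form
\[
\frac{\bigl|\kappa_{2\ell}^{\cX_N}\bigr|}{\sigma_N^{2\ell}} \;\le\; C_\ell\left(\frac{\kappa_4^{\cX_N}}{\sigma_N^4}\right)^{\ell-1}\qquad(\ell\ge 2),
\]
which is the technical heart of Hwang--Zacharovas and is established there by a different route --- working with the roots $e^{i\theta_k}$ of the polynomial directly rather than with the multisets $a^{(N)}, b^{(N)}$, and exploiting the nonnegativity of the polynomial's coefficients. Asserting that passing to the $\sinc$ product ``encodes the cancellations analytically'' names the desired conclusion without supplying the step that achieves it; as written the reverse implication is not proved. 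Your observation that $\max_j b_j^{(N)} \le \max_j a_j^{(N)}$ is correct and is precisely \Cref{lem:cgf_ineqs}(ii), which you should cite rather than re-derive.
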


\subsection{Formal cumulants}\label{ssec:formal.cum}

We may extend the notions of cumulants and moments to power series
even when they do not necessarily have associated discrete random
variables.  Suppose that $f(q) \in R[[q]]$ is a formal power series
with coefficients in a (commutative, unital) ring $R$ of
characteristic $0$. If $f(1)=1$, one may define the \textbf{formal
cumulants} of $f$ by the coefficients in the expansion of the
generating function 
\begin{equation}
\log f(e^t) = \sum_{d=1}^\infty \kappa_d(f) \frac{t^d}{d!} 
\end{equation}
where $e^t \coloneqq \sum_{k=0}^\infty \frac{t^n}{n!} \in R[[t]]$.
See \cite{Pistone.Wynn.1999} or \cite[\S2]{BKS-asymptotics} for more
details.  If $f$ is clear from context, we will often just write
$\kappa_{d}$ for $ \kappa_d(f)$. Similarly, we write $\mu =
\kappa_{1}(f)$ and $\sigma^{2}=\kappa_{2}(f)$. If $f(1) \neq 0$ is invertible, we use
$\kappa_d(f) \coloneqq \kappa_d(f/f(1))$. If $R=\mathbb{C}$, we
may also define the \textbf{formal characteristic function} of $f$ by
$\phi_{f}(t) \coloneqq f(e^{it})/f(1)$.

For example, if $f(q) = \frac{\prod_{k=1}^m
[a_k]_q}{\prod_{k=1}^m [b_k]_q} \in \mathbb{Z}[[q]]$, the cumulant formula
\eqref{eq:prod_cumulants} remains valid, so
\begin{equation}\label{eq:formal.cum}
 \kappa_d(f) = \frac{B_d}{d} \left(\sum_{k=1}^m a_k^d - \sum_{k=1}^m
b_k^d\right). 
\end{equation}
We also have two factored forms for the formal characteristic function
\begin{equation}\label{eq:cgf_cf}
  \phi_{f}(t) \coloneqq f(e^{it})/f(1) = \frac{\prod_{k=1}^m
[a_k]_{e^{it}}/a_k} {\prod_{k=1}^m [b_k]_{e^{it}}/b_k} = e^{-it\mu}
\prod_{j=1}^m \frac{\sinc(a_j t/2)}{\sinc(b_j t/2)},
\end{equation}
where $[a]_{e^{it}}=(1+e^{it}+e^{2it}+ \cdots + e^{(a-1)it}),$\, $\sinc(x) \coloneqq \frac{\sin x}{x}$ and $\mu = \kappa_1(f)$.
This coincides with the actual characteristic function
$\phi_\cX(t)$ when $f(q)$ is a cyclotomic generating
function with corresponding random variable $\cX$.

In particular, the formal mean of the cyclotomic polynomial
$\Phi_{n}(q)$ is
$$
\mu = \kappa_{1}(\Phi_n(q))= B_1 \sum_{k \mid n} \mu(n/k) k
= B_{1}\ n\prod_{\substack{\text{$p$ prime} \\ p \mid n}} \left(1 -
\frac{1}{p}\right) =\varphi (n)/2
$$
for any $n>1$ by known formulas of the M\"obius function and Euler's
totient function $\varphi (n)$.  Thus, $\mu$ is half the degree of
$\Phi_n(q)$ as expected by \eqref{eq:1}. More generally, the formal
cumulants of cyclotomic polynomials are closely related to Jordan's
generalization of the Euler totient formula, given by 
\[
J_{d}(n)=n^{k}\prod_{\substack{\text{$p$ prime} \\ p \mid n}} \left(1 -
\frac{1}{p^{k}}\right). 
\]

\begin{lemma}\label{lem:formal.cums.cyc.ps}
The formal cumulants of the cyclotomic polynomials for $n > 1$ 
satisfy
\begin{equation}\label{eq:cyclotomic.cum.1}
\kappa_d(\Phi_{n}(q)) = \frac{B_d}{d} \sum_{k \mid n} \mu(n/k) k^d = \frac{B_d}{d} J_{d}(n).
\end{equation}
 
\begin{proof}
By \eqref{eq:mobius.inversion}, the cyclotomic polynomials for $n>1$
can be expressed in rational form as a ratio of $q$-integers so one
can use \eqref{eq:formal.cum} to compute $\kappa_d (\Phi_n(q))=
\frac{B_d}{d} \sum_{k \mid n} \mu(n/k) k^d$.  This sum factors as
\begin{equation}\label{eq:cyclotomic.cum.3} \sum_{k \mid n}
\mu(n/k) k^d = n^d \prod_{\substack{\text{$p$ prime} \\ p \mid n}}
\left(1 - \frac{1}{p^{d}}\right).
\end{equation}
Indeed, \eqref{eq:cyclotomic.cum.3} can be easily verified when $n$ is
a prime power, and it is straightforward to check that both sides are
multiplicative functions.  Thus, the second equality in \eqref{eq:cyclotomic.cum.1} holds.
\end{proof}
\end{lemma}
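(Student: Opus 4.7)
The plan is to combine the Möbius-inversion rational form of $\Phi_n(q)$ from \eqref{eq:mobius.inversion} with the general cumulant formula \eqref{eq:formal.cum} to get the first equality, and then recognize the resulting arithmetic sum as $J_d(n)$ by a standard multiplicativity argument (which the authors sketch, so I would just fill in the details). Since the needed machinery is already in place, the proof is essentially bookkeeping.

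First I would use \eqref{eq:mobius.inversion} to write, for $n>1$,
\[
\Phi_n(q) = \prod_{k \mid n} [k]_q^{\mu(n/k)},
\]
and split the divisors $k \mid n$ according to the sign of $\mu(n/k)$: put the $k$ with $\mu(n/k)=+1$ into a numerator multiset and those with $\mu(n/k)=-1$ into a denominator multiset (the $k$ with $\mu(n/k)=0$ contribute a factor of $1$ and can be ignored). This expresses $\Phi_n(q)$ in the rational form of the Rational item of \Cref{thmdef:cgf_equivalences}, so the formal cumulant formula \eqref{eq:formal.cum} applies and gives
\[
\kappa_d(\Phi_n(q)) = \frac{B_d}{d}\Bigl(\sum_{\mu(n/k)=+1} k^d - \sum_{\mu(n/k)=-1} k^d\Bigr) = \frac{B_d}{d}\sum_{k \mid n} \mu(n/k)\, k^d,
\]
which is the first equality in \eqref{eq:cyclotomic.cum.1}.

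For the second equality, I would verify \eqref{eq:cyclotomic.cum.3} as indicated in the excerpt. Define $f(n) \coloneqq \sum_{k \mid n} \mu(n/k) k^d$. Since $f$ is the Dirichlet convolution of the multiplicative functions $\mu$ and $k \mapsto k^d$, it is multiplicative. The function $J_d(n) = n^d \prod_{p \mid n}(1 - p^{-d})$ is clearly multiplicative as well. It therefore suffices to check agreement on prime powers. For $n = p^e$ with $e \geq 1$, the only divisors $k$ of $p^e$ with $\mu(p^e/k) \neq 0$ are $k = p^e$ and $k = p^{e-1}$, yielding
\[
f(p^e) = p^{ed} - p^{(e-1)d} = p^{ed}\bigl(1 - p^{-d}\bigr) = J_d(p^e).
\]
Combining this with multiplicativity gives \eqref{eq:cyclotomic.cum.3} and hence the second equality in \eqref{eq:cyclotomic.cum.1}.

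There is no serious obstacle: the only mild point of care is that the rational form in \eqref{eq:formal.cum} is a priori stated for disjoint numerator and denominator multisets, whereas after splitting by the sign of $\mu(n/k)$ the resulting multisets already have disjoint supports (each divisor $k$ occurs with multiplicity at most one, and is placed in exactly one of the two sides). So no cancellation between numerator and denominator is needed, and the substitution into \eqref{eq:formal.cum} is straightforward.
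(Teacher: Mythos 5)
Your proof is correct and follows essentially the same route as the paper: invoke the M\"obius-inversion rational form \eqref{eq:mobius.inversion}, apply the formal cumulant formula \eqref{eq:formal.cum}, then identify the resulting divisor sum as $J_d(n)$ via multiplicativity plus a prime-power check. You have simply filled in the details the paper leaves as ``easily verified'' and ``straightforward to check,'' including the useful remark that the numerator and denominator multisets are automatically disjoint.
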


We will use the next corollary which was first noted by
Hwang--Zacharovas for root unitary polynomials.  It applies to all
polynomials which can be expressed as rational products of
$q$-integers.  The proof relies on \eqref{eq:formal.cum} and the fact
that the Bernoulli numbers $B_{n}$ vanish for odd $n>1$.

\begin{corollary}\cite[Cor.~3.1]{HwangZacharovas}\label{cor:kappa_alternates} Let $f \in
\bR_{\geq 0}[q]$ be any nonzero polynomial such that all of its complex roots
have modulus $1$.  The corresponding odd
cumulants $\kappa_{2d-1}(f)$ vanish after the first.  The corresponding
even cumulants $\kappa_{2d}(f)$ alternate in sign to make 
$(-1)^{d-1} \kappa_{2d}(f) \geq 0$.
\end{corollary}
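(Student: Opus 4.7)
The plan is to treat the odd-cumulant vanishing and the even-cumulant sign pattern separately. The first is a quick consequence of palindromicity; the second uses a Weierstrass-product representation of the characteristic function and the trivial fact that $\log(1-t^2/c^2)$ has nonpositive Taylor coefficients in $t^{2m}$ for real $c \neq 0$.

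First, since $f \in \bR_{\geq 0}[q]$ is nonzero we have $f(1) > 0$, so $1$ is not a root. Combined with real coefficients and all roots on the unit circle, the root multiset is closed under $\alpha \mapsto \bar\alpha = \alpha^{-1}$, hence $f(q) = q^n f(q^{-1})$ with $n = \deg f$, i.e., $f$ is palindromic. Setting $q = e^t$ gives $\log f(e^t) = nt + \log f(e^{-t})$, and comparing coefficients in $\log(f(e^t)/f(1)) = \sum_{d \geq 1} \kappa_d t^d/d!$ yields $\kappa_1 = n/2$ and $\kappa_d = (-1)^d \kappa_d$ for all $d \geq 2$, forcing $\kappa_{2d-1} = 0$ for $d \geq 2$.

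Second, for the sign of $\kappa_{2d}$, I would factor $f$ over $\bR$ into a positive constant times copies of $(q+1)$ (from $-1$ roots) and real quadratics $q^2 - 2\cos\theta_j\, q + 1$ (from conjugate pairs $e^{\pm i\theta_j}$ with $\theta_j \in (0,\pi)$). A direct calculation using $\cos t - \cos\theta = 2\sin\tfrac{\theta+t}{2}\sin\tfrac{\theta-t}{2}$ then gives
\[
\phi_f(t) = \frac{f(e^{it})}{f(1)} = e^{i\mu t}\, R(t), \qquad R(t) \coloneqq \prod_{(-1)\text{-roots}} \cos\tfrac{t}{2} \,\cdot\, \prod_{\text{pairs }j} \frac{\cos t - \cos\theta_j}{1 - \cos\theta_j},
\]
with $R(t) \in \bR$. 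Applying the classical Weierstrass products
\[
\cos\tfrac{t}{2} = \prod_{k \geq 1}\left(1 - \frac{t^2}{(2k-1)^2\pi^2}\right), \quad \frac{\cos t - \cos\theta}{1 - \cos\theta} = \Bigl(1 - \tfrac{t^2}{\theta^2}\Bigr)\prod_{k \geq 1}\Bigl(1 - \tfrac{t^2}{(2\pi k + \theta)^2}\Bigr)\Bigl(1 - \tfrac{t^2}{(2\pi k - \theta)^2}\Bigr),
\]
expresses $R(t) = \prod_\ell (1 - t^2/c_\ell^2)$ for an explicit sequence of nonzero real $c_\ell$.

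Finally, $\log R(t) = -\sum_\ell \sum_{m \geq 1} t^{2m}/(m\, c_\ell^{2m})$ has nonpositive Taylor coefficient at every $t^{2m}$, $m \geq 1$. Matching $\log \phi_f(t) = i\mu t + \log R(t)$ with the cumulant expansion $\sum_{d \geq 1} \kappa_d (it)^d/d!$ and separating real and imaginary parts recovers the odd-vanishing already obtained and yields the key identity $(-1)^m \kappa_{2m}/(2m)! = [t^{2m}]\log R(t) \leq 0$, equivalently $(-1)^{m-1}\kappa_{2m} \geq 0$. The main technical hurdle is justifying the Weierstrass product for $(\cos t - \cos\theta)/(1 - \cos\theta)$ and the termwise logarithmic manipulations, but this reduces to the absolutely convergent Euler product for $\sin$ via the sine-factorization of $\cos t - \cos\theta$ displayed above.
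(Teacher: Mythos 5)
Your proof is correct, and it actually establishes the corollary in its stated generality (arbitrary unit-modulus roots, not only roots of unity). This is worth emphasizing, because the route the paper gestures at is different and strictly narrower. The paper does not supply a proof; it cites Hwang--Zacharovas and remarks that ``the proof relies on \eqref{eq:formal.cum} and the fact that the Bernoulli numbers $B_n$ vanish for odd $n>1$.'' That remark applies only when $f$ is a rational product of $q$-integers (roots are roots of unity), for which $\kappa_d = (B_d/d)\sum_k (a_k^d - b_k^d)$: the odd vanishing is immediate from $B_{2d+1}=0$, and the even-cumulant sign follows from $\operatorname{sgn} B_{2d} = (-1)^{d-1}$ \emph{once one knows} $\sum_k a_k^{2d} \geq \sum_k b_k^{2d}$. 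But the paper establishes that inequality only later, in \Cref{lem:cgf_ineqs}(v), and does so by invoking this very corollary --- so read literally, the Bernoulli route is circular, and in any case it cannot reach polynomials whose roots are on the unit circle but are not roots of unity (e.g.\ $q^2 - 2\cos\theta\,q + 1$ with $\theta/\pi$ irrational). Your Weierstrass-product argument sidesteps both issues: palindromicity (which you correctly pin down via $f(1)>0$) kills the odd cumulants, and writing $\phi_f(t) = e^{i\mu t}\prod_\ell(1 - t^2/c_\ell^2)$ with real nonzero $c_\ell$ forces $(-1)^m\kappa_{2m}/(2m)! = [t^{2m}]\log\prod_\ell(1-t^2/c_\ell^2) \leq 0$ termwise. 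This is a genuinely more self-contained and more general argument, and it is much closer in spirit to what Hwang--Zacharovas must actually do in the cited Cor.~3.1. The only minor point to tighten is the deduction ``roots closed under inversion $\Rightarrow f(q)=q^n f(1/q)$,'' which requires noting that the reversal and $f$ share a root multiset and hence agree up to a constant fixed to be $1$ by evaluating at $q=1$; you have the ingredients for this, just state the evaluation step.
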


\subsection{Generalized uniform sum distributions}\label{sub:uniform.sums}

Here we recall some of the notation of $p$-norms, the
decreasing sequence space with finite $p$-norm, and the
generalized uniform sum random variables.  See \cite{BS-asymptotics}
for more details.

\begin{definition}\label{def:p-norm}
  Let $\mathbf{t} = (t_1, t_2, \ldots)$ be a sequence of
  nonnegative real numbers. For $p \in \bR_{\geq 1}$, the
  \textbf{$p$-norm} of $\mathbf{t}$ is
  $|\mathbf{t}|_p \coloneqq \left(\sum_{k=1}^\infty t_k^p\right)^{1/p}$.
  We also set $|\mathbf{t}|_\infty \coloneqq \sup_k t_k$.
\end{definition}

The $p$-norm has many nice properties. 
It is well-known (e.g.~\cite[Ex.~7.3, p.58]{MR1483073})
that if $1 \leq p \leq q \leq \infty$, then
$|\mathbf{t}|_p \geq |\mathbf{t}|_q$, and that if $|\mathbf{t}|_p <
\infty$, then $\lim_{q \to \infty} |\mathbf{t}|_q =
|\mathbf{t}|_\infty$. Thus, if $\mathbf{t}$ is
weakly decreasing, $|\mathbf{t}|_\infty = \sup_k t_k = t_1$.

The \textbf{sequence space with finite $p$-norm} $\mathbf{\ell}_p
\coloneqq \{\mathbf{t} = (t_1, t_2, \ldots) \in \bR_{\geq 0}^{\bN} : |\mathbf{t}|_p <
\infty\}$ is commonly used in functional analysis and statistics.
Here we define a related concept for analyzing sums of central
continuous uniform random variables.

\begin{definition}
  The \textbf{decreasing sequence space with finite $p$-norm} is \[
\widetilde{\ell}_p \coloneqq \{\mathbf{t} = (t_1, t_2,
\ldots) : t_1 \geq t_2 \geq \cdots \geq 0, |\mathbf{t}|_p <
\infty\}. \] 
\end{definition}

The elements of $\widetilde{\ell}_p$ may equivalently be thought of as
the set of \textbf{countable multisets of nonnegative real numbers
with finite $p$-norm}. Any finite multiset of nonnegative real
numbers can be considered as an element of $\widetilde{\ell}_p$ with
finite support by sorting the multiset and appending $0$'s.  We write
$\tilde{\ell}_{\leq m}$ for the resulting collection of decreasing
sequences with at most $m$ nonzero entries. The
multisets in $\widetilde{\ell}_{p}$ are uniquely determined by their
$p$-norms.  In fact, any sequence of $p$-norm values injectively
determines the multiset provided the sequence goes to infinity.

\begin{definition}\label{def:generalized.uniform.sum}\cite[\S 3]{BS-asymptotics}
A \textbf{generalized uniform sum distribution} is any distribution
associated to a random variable with finite mean and variance given as a
countable sum of independent continuous uniform random variables.
Such random variables are given by a constant
overall shift plus a \textbf{uniform sum random variable}
\[ \cS_{\mathbf{t}} \coloneqq  \cU\left[-\frac{t_1}{2},
\frac{t_1}{2}\right] + \cU\left[-\frac{t_2}{2}, \frac{t_2}{2}\right] +
\cdots \] for some $\mathbf{t}=(t_1, t_2, \ldots) \in
\widetilde{\ell}_2$. 
\end{definition}

The uniform sum random variables have nice cumulant formulas which are
similar to the CGF distributions.   It was shown in
\cite[Lem. 3.11]{BS-asymptotics} that for  $d\geq 2$ and $\mathbf{t} =
(t_{1},t_{2},\ldots) \in \widetilde{\ell}_2$, we have
\begin{equation}\label{eq:Ucont.cumulants.2}
\kappa_d^{\cS_{\mathbf{t}}} = \frac{B_d}{d} \sum_{k=1}^\infty (t_k)^d =
\frac{B_d}{d} |\mathbf{t}|_d^d.
\end{equation}

\begin{theorem}\label{prop:multiset_infinite}\cite[Thm 3.13]{BS-asymptotics}
Generalized uniform sum distributions are bijectively parameterized by 
$\mathbb{R} \times \widetilde{\ell}_2.$ 
In particular, if
$\mathbf{t}, \mathbf{u} \in \widetilde{\ell}_2$ with $\mathbf{t} \neq
\mathbf{u}$, then $\cS_{\mathbf{t}} \neq \cS_{\mathbf{u}}$.
Furthermore, $\cS_{\mathbf{t}}^* = \cS_{\mathbf{u}}^*$ if and only if
$\mathbf{t}, \mathbf{u}$ differ by a scalar multiple.
\end{theorem}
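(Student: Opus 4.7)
The plan is to decompose the bijection claim into its two natural parts and reduce each to statements about cumulants via \eqref{eq:Ucont.cumulants.2}, combined with the observation that a decreasing sequence in $\widetilde{\ell}_2$ is determined by its $p$-norms along any sequence $p \to \infty$.

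First, the map $(c, \mathbf{t}) \mapsto c + \cS_\mathbf{t}$ from $\mathbb{R} \times \widetilde{\ell}_2$ onto the set of generalized uniform sum distributions is surjective by \Cref{def:generalized.uniform.sum}. (Almost-sure convergence of the defining infinite sum follows from Kolmogorov's two-series theorem, since $\sum \Var(\cU[-t_k/2,t_k/2]) = |\mathbf{t}|_2^2/12 < \infty$.) For injectivity, the shift parameter $c$ is recovered as the overall mean, since each $\cU[-t_k/2, t_k/2]$ has mean zero; so the question reduces to showing that equality in distribution of $\cS_\mathbf{t}$ and $\cS_\mathbf{u}$ forces $\mathbf{t} = \mathbf{u}$.

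For this reduction, identically distributed random variables have equal cumulants, and by \eqref{eq:Ucont.cumulants.2} the $d$th cumulant of $\cS_\mathbf{t}$ equals $\frac{B_d}{d}|\mathbf{t}|_d^d$ for $d \geq 2$. Since $B_{2k} \neq 0$ for every $k \geq 1$, we obtain $|\mathbf{t}|_{2k} = |\mathbf{u}|_{2k}$ for all $k \geq 1$. The key sequence-space lemma is then: a decreasing nonnegative sequence in $\widetilde{\ell}_2$ is uniquely determined by any infinite subsequence of its $p$-norms with $p \to \infty$. I would prove this by recovering the entries from largest to smallest: since $\mathbf{t}$ is decreasing, $\lim_{p \to \infty} |\mathbf{t}|_p = t_1$; the multiplicity $m$ of $t_1$ is finite (otherwise $|\mathbf{t}|_2 = \infty$) and is recovered as $m = \lim_{p \to \infty} |\mathbf{t}|_p^p / t_1^p$; then $|\mathbf{t}|_p^p - m t_1^p = \sum_{k>m} t_k^p$ is the $p$th power of the $p$-norm of the tail, and the argument iterates. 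Applied to $\mathbf{t}$ and $\mathbf{u}$, this lemma yields $\mathbf{t} = \mathbf{u}$.

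For the standardized claim, note that $\sigma_{\cS_\mathbf{t}}^2 = |\mathbf{t}|_2^2/12$ and standardization scales the $d$th cumulant by $\sigma^{-d}$. Thus equality in distribution of $\cS_\mathbf{t}^*$ and $\cS_\mathbf{u}^*$ implies $|\mathbf{t}|_d/|\mathbf{t}|_2 = |\mathbf{u}|_d/|\mathbf{u}|_2$ for every even $d \geq 2$; setting $c := |\mathbf{u}|_2/|\mathbf{t}|_2$, this reads $|c\mathbf{t}|_d = |\mathbf{u}|_d$ for all even $d$, and the previous paragraph forces $\mathbf{u} = c\mathbf{t}$. Conversely, if $\mathbf{u} = c\mathbf{t}$, then $\cS_\mathbf{u}$ equals $c \cdot \cS_\mathbf{t}$ in distribution (rescale each uniform summand), so the two standardizations coincide. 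The main technical obstacle is executing the sequence-space lemma rigorously: controlling the tail contribution at each stage via the bound $\sum_{k>j} t_k^p \leq t_{j+1}^{p-2} |\mathbf{t}|_2^2$ for $p \geq 2$ shows the tail is $o(t_\ell^p)$ whenever $t_\ell < t_{j+1}$ is the next distinct value, which makes the inductive recovery precise and handles the countably-infinite support cleanly.
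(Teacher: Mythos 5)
Your argument is correct and uses exactly the tools the paper itself assembles in Section~2.4 for this purpose: the cumulant formula $\kappa_d^{\cS_{\mathbf{t}}} = \frac{B_d}{d}|\mathbf{t}|_d^d$ from \eqref{eq:Ucont.cumulants.2}, and the fact (stated just before Definition~\ref{def:generalized.uniform.sum}) that a multiset in $\widetilde{\ell}_p$ is determined by any sequence of its $p$-norms going to infinity. Since \Cref{prop:multiset_infinite} is cited from \cite{BS-asymptotics} and the paper does not reproduce a proof, there is no in-text proof to compare against, but your route---recover the shift as the mean, read off $|\mathbf{t}|_{2k}$ from the nonvanishing even cumulants, then invoke the iterative $p$-norm recovery of a decreasing sequence---is the natural one given the surrounding material and is what the citation points to.

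One small caution: your tail bound $\sum_{k>j} t_k^p \leq t_{j+1}^{p-2}|\mathbf{t}|_2^2$ is right, but the phrase ``the tail is $o(t_\ell^p)$ whenever $t_\ell < t_{j+1}$'' mislabels the roles of $t_{j+1}$ and $t_\ell$. What you actually want at each stage is: having identified and stripped off the top value $v$ and its (finite) multiplicity $m$, the remainder $R(p) := |\mathbf{t}|_p^p - m v^p$ equals $\sum_{k>m} t_k^p$, whose largest entry is the next distinct value $w < v$ with some multiplicity $m'$; then $R(p) = m' w^p + O(w'^{\,p-2})$ for the following distinct value $w' < w$, so $w = \lim_p R(p)^{1/p}$ and $m' = \lim_p R(p)/w^p$. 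Stating it that way makes the induction airtight. You should also flag the degenerate case $|\mathbf{t}|_2 = 0$ in the standardized claim, since $\cS_{\mathbf{0}}$ is a point mass and has no standardization; the bijectivity and the scalar-multiple statement are vacuous there.

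Minor remarks: the existence of all cumulants (and hence the validity of extracting $|\mathbf{t}|_d$ from them) does not require the distribution to be moment-determinate---you are only using that equal distributions have equal cumulants, which is automatic once the cumulants are finite, and \eqref{eq:Ucont.cumulants.2} guarantees finiteness because $|\mathbf{t}|_d \leq |\mathbf{t}|_2 < \infty$ for $d \geq 2$. Your appeal to Kolmogorov's two-series theorem for a.s.\ convergence is correct and worth keeping, since $\mathbf{t} \in \widetilde{\ell}_2$ does not give $\sum t_k < \infty$ and hence the support may be unbounded.
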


It is not known what all the possible limiting distributions of
families of CGF polynomials are.  By \cite[Cor. 3.17]{BS-asymptotics},
we know that all standardized uniform sum distributions
$\cS_{\mathbf{t}}^*$ do occur as limiting distributions coming from
the hook length formulas for linear extensions of forests due to
Bj\"orner and Wachs \cite{MR1022316}. In fact, all standardized
\textbf{DUSTPAN distributions} can occur as limits of CGF
distributions.  These are distributions of the form
$\cS_{\mathbf{t}}+\cN(0, \sigma^{2})$, assuming the two random
variables are independent, $\mathbf{t} \in\widetilde{\ell}_2$, and
$\sigma^{2} \coloneqq 1 - |\mathbf{t}|_2^2/12 \in \mathbb{R}_{\geq
0}$.

\section{Algebraic considerations}\label{sec:algebraic}

\subsection{Equivalent characterizations of CGFs}

One algebraic justification for studying cyclotomic generating
functions as a special class of polynomials is the following classical
result of Kronecker from the 1850s. We include a proof similar to
Kronecker's for completeness. See \cite{mo:Kronecker} for further
references.

\begin{theorem} {\cite{Kronecker.1857}}\label{thm:Kronecker}
  Suppose $f(q) \in \bZ[q]$ is monic and all of its complex roots have
modulus at most $1$.  Then the roots of $f(q)$ are each either a root of
unity or $0$.
\end{theorem}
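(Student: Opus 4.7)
The plan is to follow Kronecker's original pigeonhole argument. Let $\alpha_1, \ldots, \alpha_n \in \bC$ be the roots of $f$ listed with multiplicity, so $f(q) = \prod_{i=1}^n (q - \alpha_i)$ with each $|\alpha_i| \leq 1$. For each positive integer $k$, consider the auxiliary polynomial
\[ f_k(q) \coloneqq \prod_{i=1}^n (q - \alpha_i^k). \]
The first step is to show that $f_k \in \bZ[q]$. The coefficients of $f_k$ are, up to sign, the elementary symmetric polynomials evaluated at $\alpha_1^k, \ldots, \alpha_n^k$; these are symmetric polynomials in $\alpha_1, \ldots, \alpha_n$ with integer coefficients, hence by the fundamental theorem of symmetric polynomials they are integer polynomials in $e_1(\alpha), \ldots, e_n(\alpha)$. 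Since $f \in \bZ[q]$ and $f$ is monic, each $e_j(\alpha) \in \bZ$, so each coefficient of $f_k$ is an integer.

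Next I would bound the coefficients of $f_k$ uniformly in $k$. Since $|\alpha_i^k| \leq 1$ for all $i$ and $k$, the $j$th elementary symmetric function of $\alpha_1^k, \ldots, \alpha_n^k$ has modulus at most $\binom{n}{j}$. Therefore the set $\{f_k : k \geq 1\}$ consists of monic integer polynomials of degree $n$ with bounded coefficients, and so is finite. By the pigeonhole principle there exist $k_1 < k_2 < \cdots$ with $f_{k_1} = f_{k_2} = \cdots$, and in particular the multiset $\{\alpha_1^{k_j}, \ldots, \alpha_n^{k_j}\}$ takes only finitely many values as $j$ varies over all positive integers.

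Finally, fix any root $\alpha_i$ with $\alpha_i \neq 0$. Across all $k \geq 1$, the power $\alpha_i^k$ belongs to the finite union of the multisets of roots of the $f_k$'s, hence lies in a finite subset of $\bC$. Therefore there exist integers $k < \ell$ with $\alpha_i^k = \alpha_i^\ell$, which gives $\alpha_i^{\ell - k} = 1$ since $\alpha_i \neq 0$, so $\alpha_i$ is a root of unity. This shows every root of $f$ is either $0$ or a root of unity, as required.

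The main obstacle is really just the symmetric function bookkeeping of step one, i.e., the verification that the $f_k$ have integer coefficients; once that is in place the counting argument and the conclusion are immediate. Everything else is routine.
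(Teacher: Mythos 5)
Your proof is correct and follows the same Kronecker pigeonhole strategy as the paper: form $f_k(q) = \prod_i(q-\alpha_i^k)$, show each $f_k \in \bZ[q]$ with coefficients bounded by binomial coefficients, conclude there are only finitely many distinct $f_k$, and then extract the root-of-unity conclusion. The two places where you diverge are both small improvements in rigor rather than a different route. First, to get $f_k \in \bZ[q]$ you invoke the fundamental theorem of symmetric polynomials directly, expressing the coefficients of $f_k$ as integer polynomials in $e_1(\alpha),\ldots,e_n(\alpha) \in \bZ$; the paper instead argues the coefficients lie in $\bQ$ via Galois-invariance and separately that they are algebraic integers, which is also fine but requires tacitly passing to the splitting field and handling inseparability. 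Second, and more substantively, for the final step the paper says ``we may as well suppose $f = f_1$ is repeated'' so that some $k$th-power map permutes the roots, and then iterates the permutation $n!$ times; justifying that WLOG reduction takes a little care (e.g.\ one should pass to powers of a fixed prime). You avoid that issue entirely: you observe that for each fixed nonzero root $\alpha_i$, the orbit $\{\alpha_i^k : k \ge 1\}$ sits inside the finite union of root-multisets of the finitely many distinct $f_k$'s, so two powers coincide and $\alpha_i$ is a root of unity. That is a cleaner finish and does not require identifying a permutation structure. In short: same method, marginally tighter bookkeeping.
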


\begin{proof}
  If $f(q)=\prod_{j=1}^n (q-z_j)$ for $z_1, \ldots, z_n \in \bC$,
define $f_k(q) = \prod_{j=1}^n (q-z_j^k)$ for each positive integer
$k$. The coefficients of $f_k(q)$ are elementary symmetric polynomials
in the $z_j^k$ and each $|z_j^k| \leq 1$ by hypothesis, so the
coefficients of $f_k(q)$ are bounded in modulus by binomial
coefficients. These coefficients are also symmetric functions of
roots of $f$, so they belong to the fixed field of the Galois group of $f$,
namely the base field $\bQ$. Since $f$ is monic, its roots
$z_1, \ldots, z_n$ are algebraic integers as are all sums of products
of the $z_{i}$'s.  Therefore, the coefficients of each $f_{k}(q)$ are
also algebraic integers, and since the only algebraic integers in
$\bQ$ are the integers themselves, we observe $f_k(q) \in \bZ[q]$. The
list $f_1, f_2, \ldots$ must thus eventually repeat. We may as well
suppose $f=f_{1}$ is repeated, so that taking $k$th powers for some
$k>1$ permutes the $z_{i}$'s. Hence, taking $k$th powers $n!$ times
implies $z_{i}=z_{i}^{kn!}$ for all $i$, and the result follows.
\end{proof}

We now prove the six equivalent characterizations of a CGF from the
introduction.  These results follow closely from properties of
cyclotomic polynomials reviewed in \Cref{sec:background}.

\begin{proof}[Proof of {\Cref{thmdef:cgf_equivalences}}] By the
definition of cyclotomic polynomials, (i) $\Leftrightarrow$ (iii) and
(iv) $\Rightarrow$ (i). For (iii) $\Rightarrow$ (iv), recall the
identity for cyclotomic polynomials $\Phi_n(q) = \prod_{d \mid n} (q^d
- 1)^{\mu(n/d)}$ from \eqref{eq:mobius.inversion}.  Furthermore, if
$\Phi_{n}(q)$ is a divisor of $f \in \Phi^{+}$, then we can assume
$n>1$ since $f(1)$ is positive. By the well-known recurrence $\sum_{d
\mid n} \mu (n/d)=0$ for all $n>1$, the number of factors in the
numerator and denominator of $\Phi_n(q) = \prod_{d \mid n} (q^d -
1)^{\mu(n/d)}$ are equal so
\begin{equation}\label{eq:cyc.prod.mobius}
\Phi_n(q) = \prod_{d \mid n} (q^d - 1)^{\mu(n/d)} = \prod_{d \mid n}
[d]_q^{\mu(n/d)} = \prod_{j=1}^m \frac{1-q^{a_j}}{1-q^{b_j}} =
\prod_{j=1}^m \frac{[a_j]_q}{[b_j]_q}
\end{equation}
for some multisets $\{a_1, \ldots, a_m\}$ and $\{b_1, \ldots, b_m\}$
of positive integers. Hence (iv) is equivalent to (i) and (iii).  The
uniqueness claim follows from the uniqueness of polynomial
factorizations.

The equivalence of (i) and (iii) implies (ii) because the cyclotomic
polynomials are all monic.  In the other direction, after dividing
through by the leading coefficient which is also the greatest common
divisor of all the coefficients by hypothesis, we can assume $f(q)$ is a monic
polynomial with nonnegative integer coefficients.  If $f(q)$ also has
all of its complex roots with modulus at most $1$, then the roots of
$f$ are each either a root of unity or $0$ by
\Cref{thm:Kronecker}. Hence, (ii) implies (i).

The equivalence of (iv) and (v) follows from the polynomial identity
\[
       f(q) \prod_{j=1}^m {[b_j]_q} = \alpha q^\beta \cdot
       \prod_{j=1}^m {[a_j]_q}. 
\]
Up to a choice of $\alpha$ and $\beta$, the
equivalence of (i) and (vi) follows since the roots of unity $e^{2\pi
i k/n}$ which are zeros of $f(q)/f(1) = \bE[q^{\cX}]$ are all
determined by rational numbers $k/n$ which give rise to all zeros of
$\bE[e^{2\pi it\cX}]$.
\end{proof}

\subsection{Rational products of \texorpdfstring{$q$,q}-integers}

Consider the general class of rational products of $q$-integers of the
form
\begin{equation}\label{eq:ab_quotient}
  f(q) = \frac{\prod_{k=1}^m [a_k]_q}{\prod_{k=1}^m [b_k]_q} = \sum_{k=0}^\infty c_k q^k
\end{equation}
as formal power series in $\bZ[[q]]$. Such rational products include
the set of basic CGFs.   We examine several properties of such
products.

In the next lemma, we state an explicit formula for the coefficients
of the expansion of \eqref{eq:ab_quotient} generalizing work of Knuth
for the number of permutations with $k\leq n$ inversions in $S_n$ in
\cite[p.16]{Knuth}.  See also \cite[Ex.~1.124]{ec1} and
\cite[A008302]{oeis}, and the application to standard Young tableaux
in \cite{BKS-zeroes}.  Here we use \[ \binom{x}{k} \coloneqq
\frac{x(x-1)\cdots (x-k+1)}{k!} \] for all $k \in \bZ_{\geq 0}$ and $x
\in \bZ$, including $x<0$.  For all $x \in \mathbb{Z}$, define the
empty product $\binom{x}{0} = 1$.

\begin{lemma}\label{lem:kth_term}
  Assume $f(q) = \frac{\prod_{k=1}^m [a_k]_q}{\prod_{k=1}^m [b_k]_q} =
\sum_k c_k q^k \in \bZ[[q]]$ for multisets of positive integers $\{a_{1},\ldots,a_m
\}$ and $\{b_{1},\ldots, b_m \}$. Set \[ M_i \coloneqq \#\{k : b_k =
i\} - \#\{k : a_k = i\}. \] Then, for every $k$, the coefficient $c_{k}$ is a 
polynomial in $M_{1}, \ldots, M_k$ given by 
\begin{equation}\label{eq:kth_term} c_k = \sum_{|\mu| = k}
\prod_{\substack{i \geq 1 \\ m_i(\mu) > 0}} \binom{M_i + m_i(\mu) -
1}{m_i(\mu)} \end{equation} where $m_i(\mu)$ is the number of parts of
$\mu$ of size $i$.  Moreover, we may restrict the sum in
\eqref{eq:kth_term} to only those $|\mu| = k$ where for all $i \geq 1$, either $M_i >0$ or
$m_i(\mu) \leq |M_i|$.  
  
  \begin{proof}
By definition,    we have after cancellation 
      \[ f(q) = \prod_{M_{i}\neq 0} (1-q^i)^{-M_i}. \]
    Equation \eqref{eq:kth_term} follows using the expansion
    $(1 - q^i)^{-j} = \sum_{n=0}^\infty \binom{j+n-1}{n} q^{in}$,
    multiplication of ordinary generating functions, and the fact that
    $\binom{x}{0} = 1$. For the restriction,
    consider a term indexed by $\mu$ with $M_i \leq 0$ and
    $m_i(\mu) > |M_i| = -M_i$. It follows that
    $M_i \leq 0 \leq M_i + m_i(\mu) - 1$, so that
      \[ m_i(\mu)! \binom{M_i + m_i(\mu) - 1}{m_i(\mu)}
          = (M_i + m_i(\mu) - 1) \cdots (M_{i}+2)(M_{i}+1)  M_i = 0. \]
  \end{proof}
\end{lemma}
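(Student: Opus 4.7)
The plan is to reduce the claim to a routine coefficient extraction after regrouping the factors of $f(q)$ by the common exponents appearing in the $q^i - 1$ terms.

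First, I would rewrite $f(q)$ as a product of $(1-q^i)$'s: since $[a]_q = (1-q^a)/(1-q)$, the $(1-q)$ factors cancel in pairs, yielding
\[
  f(q) = \prod_{k=1}^m \frac{1-q^{a_k}}{1-q^{b_k}}.
\]
Grouping numerator and denominator factors by the value of the exponent and using the definition of $M_i$, this telescopes to
\[
  f(q) = \prod_{i \geq 1} (1-q^i)^{-M_i},
\]
where factors with $M_i = 0$ contribute $1$ and may be dropped.

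Next, I would apply the negative binomial expansion, which is valid termwise in $\mathbb{Z}[[q]]$ regardless of the sign of $M_i$ (since $\binom{-M_i}{n}(-1)^n = \binom{M_i+n-1}{n}$ and these are integers):
\[
  (1-q^i)^{-M_i} = \sum_{n \geq 0} \binom{M_i + n - 1}{n}\, q^{in}.
\]
Multiplying these series over all $i$, the coefficient of $q^k$ is a sum over tuples $(n_1, n_2, \ldots)$ of nonnegative integers with $\sum_i i \cdot n_i = k$. Such tuples are in bijection with partitions $\mu \vdash k$ via $n_i = m_i(\mu)$, so
\[
  c_k = \sum_{|\mu|=k} \prod_{i \geq 1} \binom{M_i + m_i(\mu) - 1}{m_i(\mu)}.
\]
Since $\binom{M_i - 1}{0} = 1$, the product may be restricted to those $i$ with $m_i(\mu) > 0$, giving the stated formula.

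For the final restriction on the summation, I would expand the binomial as the falling-product
\[
  m_i(\mu)!\, \binom{M_i + m_i(\mu) - 1}{m_i(\mu)} = (M_i + m_i(\mu) - 1)(M_i + m_i(\mu) - 2) \cdots (M_i + 1)\, M_i.
\]
This product vanishes exactly when one of the consecutive integers $M_i, M_i+1, \ldots, M_i + m_i(\mu) - 1$ is zero, which happens precisely when $M_i \leq 0$ and $m_i(\mu) \geq -M_i + 1$, i.e.\ $m_i(\mu) > |M_i|$. Hence partitions $\mu$ violating the restriction contribute $0$ and may be discarded.

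There is no real obstacle here; the only minor point requiring care is keeping track of signs when expanding $(1-q^i)^{-M_i}$ in the case $M_i < 0$ so that one recovers $\binom{M_i + n - 1}{n}$ uniformly, and confirming that the generalized binomial coefficient $\binom{x}{k}$ for negative $x$ correctly encodes when the series terminates, which is precisely the content of the vanishing observation at the end.
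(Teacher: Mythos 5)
Your proof is correct and follows essentially the same route as the paper: rewrite $f(q) = \prod_{i}(1-q^i)^{-M_i}$, expand each factor via $\binom{M_i+n-1}{n}$, identify partitions via $n_i = m_i(\mu)$, and observe that the falling product $(M_i + m_i(\mu)-1)\cdots(M_i+1)M_i$ vanishes exactly when $M_i \leq 0 \leq M_i + m_i(\mu) - 1$. The only difference is that you spell out the $\binom{-M_i}{n}(-1)^n = \binom{M_i+n-1}{n}$ identity explicitly, which the paper leaves implicit.
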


\begin{remark}
  The binomial coefficients $\binom{M_i + m_i(\mu) - 1}{m_i(\mu)}$
appearing in \eqref{eq:kth_term} are positive when $M_i > 0$. When
$M_i \leq 0$ and $m_i(\mu) \leq |M_i|$, the binomial coefficient is
nonzero and has sign $(-1)^{m_i(\mu)}$. Thus the negative summands
in \eqref{eq:kth_term} are precisely those for which $\mu$ has an odd
number of row lengths $i$ such that $M_i \leq 0$.
\end{remark}

\begin{remark}
  In the particular case when $f(q)$ is a basic cyclotomic generating
function, the fact that the expression in \eqref{eq:kth_term} is
nonnegative, symmetric, and eventually $0$ is quite remarkable.  Is
there a simplification of the expression in \eqref{eq:kth_term} that
would directly imply nonnegativity of the coefficients for any $f(q)
\in \Phi^{+}$?  Even the case of $[n]_{q}!$ or any of the examples
CGFs mentioned in the Introduction would be of interest.
\end{remark}

\subsection{Necessary conditions for \texorpdfstring{$q$}-integer ratios to be polynomial}\label{ssec:rational_props}
The most obvious necessary and sufficient condition for rational
products of $q$-integers of the form
\begin{equation}\label{eq:q_quotient}
f(q)=\frac{\prod_{k=1}^m [a_k]_q}{\prod_{k=1}^m
[b_k]_q} 
\end{equation}
to yield a polynomial (not necessarily with positive coefficients) is
for the multiplicity of each primitive $d$th root of unity to be
nonnegative. However, both numerator and denominator can be factored
uniquely into cyclotomic polynomials, so the \textbf{polynomiality criterion} is
equivalent to
\begin{equation}\label{eq:multiplicity_dominance}
  \#\{k : \ell \mid a_k\} \geq \#\{k : \ell \mid b_k\} \hspace{.1in}
  \qquad\forall \ \ell \in \bZ_{\geq 2}.
\end{equation}

\begin{example}\label{ex:hooks}
Recall from the introduction that Stanley's $q$-analogue of the hook
length formula
$$
q^{b(\lambda)} \frac{[n]_q!}{\prod_{c \in \lambda} [h_c]_q} = \sum_{T
\in \mathrm{SYT}(\lambda )} q^{\maj (T)}, $$ gives an important family
of cyclotomic generating functions.  In this case, the inequality
\eqref{eq:multiplicity_dominance} reduces to the fact that the
$\ell$-quotient of any partition $\lambda$ with $|\lambda| = n$ has no
more than $\lfloor n/\ell\rfloor$ cells. In particular, this gives a
quick proof that $[n]_q!/\prod_{c \in \lambda} [h_c]_q \in \bZ[q]$,
though the stronger result that the coefficients are nonnegative is
not clear from this approach.  See \cite[Prop.~4.11]{stanley.1979} and
\cite[Cor. 21.5]{ec2} for more details and proofs for this equality.
 \cite{BKS-asymptotics} for

\end{example}

\subsection{Necessary conditions for a \texorpdfstring{$q$}-integer ratio to be a CGF}\label{ssec:cgf_props}
In addition to satisfying the polynomiality conditions in
\eqref{eq:multiplicity_dominance} and \eqref{eq:ineq_d_powers}, what
further restrictions on $\{a_1, \ldots, a_m\}$ and $\{b_1, \ldots,
b_m\}$ are required for a rational product of $q$-integers as in
\eqref{eq:q_quotient} to yield a cyclotomic generating function?  Say
$f(q)=\prod_{k=1}^m [a_k]_q/\prod_{k=1}^m [b_k]_q = \sum c_{k}
q^{k}$.  Using the expression for $c_k$ in \eqref{eq:kth_term}, $c_k
\geq 0$ and $\lim_{k \to \infty} c_k = 0$ are by definition necessary and
sufficient to prove $f(q) \in \Phi^{+}$, though these conditions are
difficult to use for families of such rational products in practice.
We will outline some more direct necessary conditions here.
We begin with complete classifications for $m=1, 2$.

\begin{lemma}\label{lem:m=1}
  A rational product of $q$-integers $[a]_q/[b]_q$ is a CGF if and only if $b \mid a$.
\end{lemma}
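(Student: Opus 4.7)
My plan is to handle the two implications separately, with the reverse direction following almost immediately from the polynomiality criterion and the forward direction from a direct factorization.

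For the forward direction (assume $b \mid a$), I would write $a = bc$ for some positive integer $c$ and exhibit the explicit factorization
\[
  \frac{[a]_q}{[b]_q} = \frac{1-q^{bc}}{1-q^b} = 1 + q^b + q^{2b} + \cdots + q^{(c-1)b}.
\]
This is visibly a polynomial with nonnegative integer coefficients whose complex roots are the $a$-th roots of unity that are not $b$-th roots of unity, so by the complex form (i) of \Cref{thmdef:cgf_equivalences} it lies in $\Phi^{+}$. (Alternatively, in the cyclotomic form (iii), this is $\prod_{d \mid a,\, d \nmid b} \Phi_d(q)$.)

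For the reverse direction (assume $[a]_q/[b]_q \in \Phi^{+}$), I would apply the polynomiality criterion \eqref{eq:multiplicity_dominance} with the singleton multisets $\{a_1\} = \{a\}$ and $\{b_1\} = \{b\}$, which forces, for every integer $\ell \geq 2$,
\[
  \ell \mid b \implies \ell \mid a.
\]
If $b = 1$ then $b \mid a$ trivially; otherwise take $\ell = b \geq 2$ to conclude $b \mid a$ directly.

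There is essentially no obstacle here: both directions are immediate once the polynomiality criterion is in hand. The only thing worth being slightly careful about is the edge case $b = 1$, which is handled separately in the reverse direction, and the observation that \eqref{eq:multiplicity_dominance} is by itself enough (we do not need to separately invoke nonnegativity of coefficients, since the explicit factorization in the forward direction already produces a visibly nonnegative polynomial).
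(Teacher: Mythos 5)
Your proof is correct. The forward direction is essentially the paper's argument: your explicit sum $1 + q^b + \cdots + q^{(c-1)b}$ is exactly $[a/b]_{q^b}$, which the paper cites directly. The reverse direction differs in mechanism: you invoke the polynomiality criterion \eqref{eq:multiplicity_dominance} with the choice $\ell = b$, whereas the paper simply observes that any CGF must evaluate to a positive integer at $q = 1$, and $[a]_1/[b]_1 = a/b$, giving $b \mid a$ immediately. Both are one-liners once the relevant fact is in hand; the paper's route is marginally more elementary since it avoids unpacking the cyclotomic-root-multiplicity interpretation underlying \eqref{eq:multiplicity_dominance}, while your route is slightly more uniform in spirit with the subsequent $m=2$ analysis, which leans on the same divisibility criterion. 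One small stylistic note: for the forward direction you do not need to separately verify that the roots are roots of unity---the displayed factorization already exhibits a polynomial in $\bZ_{\geq 0}[q]$ whose rational form is $[a]_q/[b]_q$, so form~(iv) applies directly.
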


\begin{proof}
  If the ratio is a CGF, it must evaluate to a positive integer at $q=1$, so $b \mid a$
  is necessary. It is also sufficient since when $b \mid a$, we have $[a]_q/[b]_q = [a/b]_{q^b}$, which is a CGF.
\end{proof}

\begin{lemma}
  Consider a rational product of $q$-integers $f(q) = [a_1]_q [a_2]_q / [b_1]_q [b_2]_q$.
  \begin{enumerate}
  \item Then, $f(q)$ is a polynomial if and only if
  \begin{enumerate}[(i)]
    \item $b_1 \mid a_1$ and $b_2 \mid a_2$; or
    \item $b_1 \mid a_2$ and $b_2 \mid a_1$; or
    \item $b_1, b_2 \mid a_1$ and $\gcd(b_1, b_2) \mid a_2$; or
    \item $b_1, b_2 \mid a_2$ and $\gcd(b_1, b_2) \mid a_1$.
  \end{enumerate}
  \item If $f(q)$ is a power series with nonnegative coefficients,
  then $a_1, a_2 \in \Span_{\bZ_{\geq 0}}\{b_1, b_2\}$.
  \item Moreover, $f(q) \in \Phi^{+}$ if and only if both the divisibility and span conditions hold.
  \end{enumerate}

\end{lemma}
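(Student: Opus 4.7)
The plan is to address the three parts in order and combine them for the ``Moreover.'' For part (1), I would apply the polynomiality criterion \eqref{eq:multiplicity_dominance} at $n = b_1$, $n = b_2$, and $n = \gcd(b_1, b_2)$. Testing at $n = b_1$ forces $b_1 \mid a_1$ or $b_1 \mid a_2$, and symmetrically at $n = b_2$, producing four subcases indexed by which $a_i$ receives each divisibility. Two subcases correspond directly to (i) and (ii); in the two subcases where both $b_j$ target a common $a_i$, the test at $n = \gcd(b_1, b_2)$ supplies the residual $\gcd(b_1, b_2) \mid a_{3-i}$ that completes (iii) or (iv). The converse is routine casework: under each of (i)--(iv), I would verify \eqref{eq:multiplicity_dominance} for every $n \geq 2$ by distinguishing whether $n$ divides $0$, $1$, or both of $b_1, b_2$.

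For part (2), I would expand
\[ f(q) = (1 - q^{a_1})(1 - q^{a_2}) \sum_{n_1, n_2 \geq 0} q^{n_1 b_1 + n_2 b_2} \]
and write $r(k)$ for the number of representations $k = n_1 b_1 + n_2 b_2$ with $n_1, n_2 \in \bZ_{\geq 0}$. Assuming $a_1 \leq a_2$ without loss of generality, the coefficient of $q^{a_1}$ simplifies to $r(a_1) - 1$, whose nonnegativity yields $a_1 \in \Span_{\bZ_{\geq 0}}\{b_1, b_2\}$. For $a_2$, I would split on whether $a_2 - a_1 \in \Span$: if so, then $a_2 = (a_2-a_1) + a_1$ is already a nonnegative combination; otherwise $r(a_2-a_1) = 0$, so the coefficient of $q^{a_2}$ reduces to $r(a_2) - 1$ (or $r(a_2) - 2$ when $a_1 = a_2$), and nonnegativity again forces $a_2 \in \Span$.

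The forward direction of part (3) is immediate from (1) and (2). For the backward direction, I would split according to (i)--(iv) and produce $f$ as a product or sum of polynomials manifestly in $\Phi^+$. Cases (i) and (ii) follow directly from \Cref{lem:m=1}, since $f$ factors as a product of two single-quotient CGFs. The main obstacle will be cases (iii) and (iv). For (iii), I would use the span decomposition $a_2 = xb_1 + yb_2$ together with the identity $[xb_1 + yb_2]_q = [x]_{q^{b_1}}[b_1]_q + q^{xb_1}[y]_{q^{b_2}}[b_2]_q$ to obtain
\[ f(q) = [a_1/b_2]_{q^{b_2}} \cdot [x]_{q^{b_1}} + q^{xb_1} \cdot [a_1/b_1]_{q^{b_1}} \cdot [y]_{q^{b_2}}, \]
exhibiting $f$ as a sum of two products of $q$-integers with nonnegative coefficients; the divisibilities $b_1, b_2 \mid a_1$ are precisely what allow the denominators to cancel in each summand, and the span hypothesis on $a_2$ is what makes the identity applicable. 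Case (iv) follows by the symmetric argument after interchanging $a_1$ and $a_2$.
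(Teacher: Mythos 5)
Your proof is correct, and while Part 1 matches the paper's approach (the polynomiality criterion \eqref{eq:multiplicity_dominance} is exactly the cyclotomic-factor-cancellation argument the paper runs), Parts 2 and 3 take genuinely different and arguably cleaner routes. For Part 2, the paper defers to \Cref{lem:cgf_ineqs}(i), which compares the support of the power series $\prod 1/(1-q^{b_k})$ with that of $f(q)\prod 1/(1-q^{a_k})$ all at once; you instead extract just the coefficients of $q^{a_1}$ and $q^{a_2}$ from the expansion and read off the span condition directly. Both work; the paper's is more uniform in $m$, while yours is more elementary and self-contained for $m=2$. The real divergence is in Part 3, case (iii). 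The paper first reduces (without fully justifying the step) to $\gcd(b_1,b_2)=1$ and $a_1=b_1b_2$, then carries out a lattice-point count: it writes the coefficient of $q^n$ as $N(n;0,0;b_2)-N(n;u,v;b_2)$ and analyzes the integer solutions of $xb_1+yb_2=n$ to conclude nonnegativity. Your argument bypasses all of that by applying the identity $[m+n]_q = [m]_q + q^m[n]_q$ to $a_2 = xb_1 + yb_2$, yielding
\[
  f(q) = [a_1/b_2]_{q^{b_2}}\,[x]_{q^{b_1}} + q^{xb_1}\,[a_1/b_1]_{q^{b_1}}\,[y]_{q^{b_2}},
\]
which is manifestly a sum of two products of polynomials with nonnegative coefficients, with no normalization or Diophantine analysis needed. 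This is both shorter and more transparent: the divisibility hypotheses $b_1, b_2 \mid a_1$ make the two quotients into honest $q$-integers, and the span hypothesis on $a_2$ is precisely what licenses the splitting identity. One small note of imprecision: you say the coefficient of $q^{a_1}$ ``simplifies to $r(a_1)-1$,'' which is only exact when $a_1 < a_2$ (when $a_1=a_2$ it is $r(a_1)-2$), though either way the conclusion $r(a_1)\geq 1$ follows; you already flag the analogous case split for $a_2$, so this is cosmetic.
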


\begin{proof}
  First suppose $f(q)$ is a polynomial. If either $b_{i}=1$, then
$[1]_{q}=1$ and the conditions follow easily by \eqref{eq:q-integers}, so
assume each $b_{i}>1.$ By \eqref{eq:q-integers}, $\Phi_{b_i}(q)$
divides $[b_{i}]$. Since $\Phi_{b_i}(q)$ appears in the denominator of
$f(q)$, it must be canceled in the numerator, forcing $b_i \mid a_1$
or $a_2$ for $i=1, 2$. If we do not have cases (i) or (ii), we have
the first clause of (iii) or (iv). Without loss of generality suppose
$b_1, b_2 \mid a_1$ and $\gcd(b_1, b_2) > 1$. Then, $\Phi_d(q)$ for
$d=\gcd(b_1, b_2)$ appears twice in the denominator, and so must
divide both $[a_1]_q$ and $[a_2]_q$ in order for $f(q)$ to be
polynomial, giving $d \mid a_2$ by \eqref{eq:q-integers} again.
  
 Conversely, suppose one of (i)-(iv) holds. Then, $f(q)$ is a
polynomial if conditions (i) or (ii) hold by \Cref{lem:m=1}, and (iv)
is equivalent to (iii), so it suffices to assume (iii) holds. Use the
cyclotomic expansion of $[n]_{q}$ again from \eqref{eq:q-integers}.
Since (iii) holds, every cyclotomic divisor of either $[b_1]_{q}$ or
$[b_2]_{q}$ but not both is canceled by a cyclotomic divisor of
$[a_1]_{q}$, and every divisor of both $[b_1]_{q}$ and $[b_2]_{q}$ is
canceled by divisors of $[a_1]_{q}$ and $[a_2]_{q}$ together.  Thus,
(1) holds.

The span condition in statement (2) is proved for all basic CGFs in
\Cref{lem:cgf_ineqs} below. To prove (3), observe that $f(q) \in
\Phi^{+}$ implies both the divisibility and span conditions hold by
(1) and (2).  Conversely, if any one of the divisibility
conditions hold, then $f(q)$ is a polynomial with integer
coefficients. If divisibility conditions (i) or (ii) hold, then $f(q)
\in \Phi^{+}$ by \Cref{lem:m=1}, so suppose divisibility condition
(iii) and the span condition in (2) holds.  Again the case (iv) is
similar.

By canceling out common cyclotomic factors, we may suppose without
loss of generality that $\gcd(b_1, b_2) = 1$ and that $a_1 = b_1
b_2$. Since $a_{2} \in \Span_{\bZ_{\geq 0}}\{b_1, b_2\}$, there exist
positive integers $u,v$ such that $a_{2}= u b_1 + v b_2$.  So, we must
show that the polynomial $f(q)=[b_1 b_2]_q [u b_1 + v b_2]_q / [b_1]_q
[b_2]_q$ has nonnegative coefficients.  

Given the assumptions above, $f(q)$ may be expressed as
\[
f(q)=(1-q^{b_1 b_2})(1-q^{u
b_1 + v
b_2})(1+q^{b_1}+q^{2b_1}+\cdots)(1+q^{b_2}+q^{2b_2}+\cdots).
\]
Let $N(n) = \#\{x, y \in \bZ_{\geq 0} : xb_1 + yb_2 = n\}$. The
coefficient of $q^n$ in $f(q)$ is hence
\[
N(n) - N(n - b_1 b_2) - N(n - ub_1 - vb_2) + N(n - b_1 b_2 - ub_1 - vb_2).
\]
Let $N(n; \alpha, \beta) = \#\{\alpha \leq x, \beta \leq y : xb_1 +
yb_2 = n\}$. The previous expression becomes \[ N(n; 0, 0) - N(n; b_2,
0) - N(n; u, v) + N(n; u+b_2, v). \] Furthermore, let $N(n; \alpha,
\beta; \delta) = \#\{\alpha \leq x < \alpha+\delta, \beta \leq y :
xb_1 + yb_2 = n\}$. Thus, the expression for the coefficient of $q^n$
in $f(q)$ becomes \[ N(n; 0, 0; b_2) - N(n; u, v; b_2). \]

Assume there exist $(x_0, y_0) \in \bZ^{2}$ such that $x_{0}b_1 +
y_{0}b_2 = n$, since otherwise the coefficient of $q^n$ in $f(q)$ is $0$.
Then, all integer solutions of $xb_1 + yb_2 = n$ are of the form
$x=x_0-tb_2, y=y_0+tb_1$ for some $t \in \bZ$ by the theory of linear
Diophantine equations.  In particular, there exist unique solutions
$(x_1, y_1), (x_2, y_2) \in \bZ^{2} $ with $0 \leq x_1 < b_2$ and $u
\leq x_2 < u+b_2$. Hence, $N(n; 0, 0; b_2) = \delta_{y_1 \geq 0}$ and
$N(n; u, v; b_2) = \delta_{y_2 \geq v}$. Since $x_1 \leq x_2$, we have
$y_1 \geq y_2$ as solutions to the linear equation. Thus, $\delta_{y_2
\geq v} = 1 \Rightarrow \delta_{y_1 \geq 0} = 1$.  Therefore, $ N(n;
0, 0; b_2) - N(n; u, v; b_2)$ is nonnegative, which completes the
proof.
\end{proof}

\begin{example}
For $m=2$, the polynomiality condition alone does not imply
nonnegative integer coefficients.  For example,
$[1]_{q}[6]_{q}/[2]_{q}[3]_{q}=\Phi_{6}(q)=q^{2}-q+1$.  Here $6 \in
\Span_{\bZ_{\geq 0}}\{2,3\}$, but $1$ is not.  For $m=3$ and higher,
the polynomiality and span conditions do not imply nonnegative
integer coefficients. For example,
\begin{equation}\label{eq:m3}
\frac{[4]_q[4]_q[15]_q}{[2]_q[3]_q[5]_q} =
q^{13} + q^{11} + q^{10}  -q^{9} + 2q^{8} + 2q^{5}  -q^{4} + q^{3} + q^{2} + 1.
\end{equation}
However, we do have the following necessary conditions for basic CGFs
related to spans and sums.  
\end{example}

In the next lemma, we will use the big Theta notation for asymptoticly
bounding a function.  If $f(n)$ is a function of $n$, we write
$f=\Theta(g)$ provided there exist constants $c,d$, a function $g(n)$,
and a positive integer $n_{0}$ such that for all $n\geq n_{0}$ we have
$cg(n)\leq f(n) \leq dg(n)$.  Thus, the function $g$ is a reasonable
approximation to $f$ for large $n$.  We will extend the big Theta
notation to other families of functions not necessarily indexed by
positive integers if such constants exist.

\begin{lemma}\label{lem:cgf_ineqs}
  If  $f(q)=\prod_{k=1}^m [a_k]_q/[b_k]_q \in \Phi^{+}$,  then the following hold.
  \begin{enumerate}[(i)]
    \item For each $1\leq  k\leq m$,\ $a_k \in \Span_{\bZ_{\geq 0}}\{b_1, \ldots, b_m\}$.
     \item\label{part:ends} If $a_1 \leq \cdots \leq a_m$ and $b_1 \leq \cdots \leq b_m$,
           then $a_1 \geq  b_1\geq 1$ and $a_m \geq  b_m$.  

    \item  Let $\cX$ be the random variable corresponding to $f(q)/f(1)$
    with mean $\mu =\frac{1}{2}\sum_{k=1}^m(a_k - b_k)$ and variance $\sigma^{2}=\frac{1}{6}\sum_{k=1}^m(a_k^{2} - b_k^{2})$. Then, $\log\sigma = \Theta(\log\mu)$. More precisely,
      \[ \mu/2 \leq \sigma^2 \leq \mu^2. \]

    \item We have \[ \frac{\sum_{i=1}^m
(a_i^4 - b_i^4)} {\left(\sum_{i=1}^m (a_i^2 - b_i^2)\right)^2} \leq
\frac{5}{3}. \]

\item For all positive even integers $d$ and $d=1$, we have 
\begin{equation}\label{eq:ineq_d_powers}
\sum_{k=1}^m a_k^d \geq
\sum_{k=1}^m b_k^d,
\end{equation}
where the inequality is strict if the polynomial is non-constant.  For
example, when $d=1$ this inequality is equivalent to noting the degree
of $f(q)$ is $\sum_{k=1}^m a_k - b_k \geq 0$.
\end{enumerate}
  
  \begin{proof}
    (i) From the definition of $f(q)$, we have 
      \[ \prod_{k=1}^m 1/(1-q^{b_k}) = f(q) \prod_{k=1}^m 1/(1-q^{a_k}). \]
    The support of the power series $\prod_{k=1}^m 1/(1-q^{b_k})$
    is $\Span_{\bZ_{\geq 0}} \{b_1, \ldots, b_m\}$. Since $f(q)$
    has nonnegative coefficients and constant term $1$, comparing supports gives
      \[ \Span_{\bZ_{\geq 0}} \{a_1, \ldots, a_m\} \subset \Span_{\bZ_{\geq 0}} \{b_1, \ldots, b_m\}, \]
    which yields (i).
    
    (ii) From (i), it follows that $a_1 \geq \min\{b_1, \ldots, b_m\} = b_1$.
    From \eqref{eq:multiplicity_dominance} at $\ell=b_m$, there is
    some $a_k$ such that $b_m \mid a_k \leq a_m$, so $b_m \leq a_m$.

    (iii) The argument in \cite[Lem.~2.5]{HwangZacharovas} applies here.

    (iv) By definition, the \textit{kurtosis} of $\cX$ is
$\bE[(\frac{X-\mu}{\sigma})^{4}]$. Jensen's Inequality for convex
functions gives the inequality of central moments
\[
\alpha_2^2 = \bE[(\cX - \mu)^2]^2 \leq \bE[(\cX - \mu)^4] = \alpha_4.
\]
In terms of cumulants, this says $\kappa_2^2 \leq \kappa_4 + 3
\kappa_2^2$, which simplifies to the stated expression by
\eqref{eq:prod_cumulants}.

(v) The inequality follows directly from the formula for cumulants of
a rational product of $q$-integers \eqref{eq:prod_cumulants} and the
sign constraints in \Cref{cor:kappa_alternates}.
  \end{proof}
\end{lemma}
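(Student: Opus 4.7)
The plan is to treat the five parts in order of increasing technicality, exploiting that (i) and (ii) are algebraic statements about multisets while (iii)--(v) reduce to computations with the cumulant formula \eqref{eq:prod_cumulants} and the sign information in \Cref{cor:kappa_alternates}.

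For part (i), my first move is to rewrite the definition of $f$ as the power series identity
\[
\prod_{k=1}^m \frac{1}{1-q^{b_k}} \;=\; f(q) \prod_{k=1}^m \frac{1}{1-q^{a_k}}.
\]
Both inverse products on either side are ordinary generating functions for nonnegative integer combinations of the $b_k$ and $a_k$ respectively, so their supports are exactly $\Span_{\bZ_{\geq 0}}\{b_1,\ldots,b_m\}$ and $\Span_{\bZ_{\geq 0}}\{a_1,\ldots,a_m\}$. Since $f(q)$ has nonnegative coefficients with constant term $1$ (basic CGFs are monic and have $f(0)=1$), multiplying by $f(q)$ can only enlarge the support. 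Comparing supports on the two sides therefore forces every $a_k$ to lie in $\Span_{\bZ_{\geq 0}}\{b_1,\ldots,b_m\}$. Part (ii) then follows immediately: the lower bound $a_1 \geq b_1$ is a consequence of (i) applied to the smallest $a$, and the inequality $a_m \geq b_m$ follows by applying the polynomiality/cyclotomic divisibility criterion \eqref{eq:multiplicity_dominance} with $\ell = b_m$, which forces some $a_k$ (and hence $a_m$) to be a multiple of $b_m$.

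Parts (iii)--(v) I would route entirely through \Cref{thm:prod_cumulants}, using the values $B_1 = \tfrac12$, $B_2 = \tfrac16$, $B_4 = -\tfrac{1}{30}$ and the sign pattern of the higher $B_{2d}$. For (iii), the formulas read $\mu = \tfrac12 \sum(a_k - b_k)$ and $\sigma^2 = \tfrac{1}{12}\sum(a_k^2 - b_k^2)$. The upper bound $\sigma^2 \leq \mu^2$ is a general fact from the palindromicity of basic CGFs: the support is contained in $\{0,1,\dots,2\mu\}$, so $|X-\mu|\leq \mu$ pointwise and hence $\sigma^2 = \bE[(X-\mu)^2] \leq \mu^2$. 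The lower bound $\sigma^2 \geq \mu/2$ is the delicate inequality; I would prove this following the Hwang--Zacharovas argument in their Lemma~2.5 (which uses only the root-unitary hypothesis together with a careful estimate on the cumulant generating function at small $t$), and then assemble the $\Theta(\log \mu)$ statement by taking logs of the sandwich. This is the step I expect to be the main obstacle, since it is the only place one must work beyond formal cumulant algebra and leverage the root-unitary property itself.

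Part (iv) I would derive from Jensen's inequality applied to the convex function $x \mapsto x^2$: $\bE[(X-\mu)^2]^2 \leq \bE[(X-\mu)^4]$, i.e.\ $\alpha_2^2 \leq \alpha_4$. Converting to cumulants via $\alpha_2 = \kappa_2$ and $\alpha_4 = \kappa_4 + 3\kappa_2^2$ yields $\kappa_4 \geq -2\kappa_2^2$. Substituting $\kappa_2 = \tfrac{1}{12}\sum(a_i^2-b_i^2)$ and $\kappa_4 = -\tfrac{1}{120}\sum(a_i^4-b_i^4)$ and simplifying produces precisely the ratio bound $\sum(a_i^4-b_i^4)/(\sum(a_i^2-b_i^2))^2 \leq 5/3$. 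Finally, part (v) is immediate from \Cref{cor:kappa_alternates}: one checks that $B_{2d}$ has sign $(-1)^{d+1}$ while the corollary forces $(-1)^{d-1}\kappa_{2d} \geq 0$, and these two signs cancel to give $\sum a_k^{2d} \geq \sum b_k^{2d}$ for all $d \geq 1$; the case $d=1$ is just the observation that $\deg f = \sum(a_k-b_k) \geq 0$, with strictness when $f$ is non-constant.
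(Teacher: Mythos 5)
Your proposal is correct and follows essentially the same route as the paper's proof: (i) via the support comparison of the power series $\prod 1/(1-q^{b_k}) = f(q)\prod 1/(1-q^{a_k})$, (ii) via (i) together with the multiplicity-dominance criterion at $\ell=b_m$, (iv) via Jensen applied to $\alpha_2^2 \le \alpha_4$ and the conversion $\alpha_4 = \kappa_4 + 3\kappa_2^2$, and (v) directly from the cumulant formula and the sign constraints of \Cref{cor:kappa_alternates}. For (iii) you defer, as the paper does, to the Hwang--Zacharovas Lemma~2.5 argument for the nontrivial lower bound $\sigma^2 \ge \mu/2$; your extra observation that the easy upper bound $\sigma^2 \le \mu^2$ follows from palindromicity and the pointwise bound $|X-\mu|\le\mu$ is a nice elementary elaboration but not a different method. (Incidentally, your value $\sigma^2 = \tfrac{1}{12}\sum(a_k^2-b_k^2)$ is the one consistent with \eqref{eq:prod_cumulants}; the coefficient $\tfrac{1}{6}$ printed in the lemma statement appears to be a typo.)
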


\begin{remark}
  Warnaar--Zudilin \cite[Conj.~1]{MR2773098} conjecture that
  a family of inequalities similar to \eqref{eq:multiplicity_dominance}
  for ratios of $q$-factorials implies positivity. They prove the conjecture
  for $q$-super Catalan numbers.
\end{remark}

\begin{remark}\label{rem:cgf_not_pointwise}
One might wonder if $f(q)=\prod_{k=1}^m [a_k]_q/[b_k]_q \in \Phi^{+}$
implies $a_k \geq b_k$ for all $k$ when sorted as in
\Cref{lem:cgf_ineqs}(ii). However, this is \textbf{false}, though
empirically counterexamples are rare. For instance, \[
\frac{[2]_q[3]_q[3]_q[8]_q[12]_q}{[1]_q[1]_q[4]_q[4]_q[6]_q} = q^{12}
+ 2 q^{11} + 2 q^{10} + 2 q^{7} + 4 q^{6} + 2 q^{5} + 2 q^{2} + 2 q +
1 \] is one of only three counterexamples with $5$ $q$-integers in the
numerator and denominator and maximum entry $\leq 14$, out of $956719$
cyclotomic generating functions satisfying these conditions. On the other
hand, we propose the following.
\end{remark}

\begin{conjecture}\label{conj:majorization}
Suppose $f(q)=\prod_{k=1}^m [a_k]_q/[b_k]_q \in \Phi^{+}$,\ $a_1
\leq \cdots \leq a_m$, \ $b_1 \leq \cdots \leq b_m$.  Then
$\sum_{k=1}^\ell a_k \geq \sum_{k=1}^\ell b_k$ and $\sum_{k=\ell}^m
a_k \geq \sum_{k=\ell}^m b_k$ for all $\ell$. That is, $\{a_1,
\ldots, a_m\}$ (weakly) \textbf{majorizes} $\{b_1, \ldots, b_m\}$ from
both sides.
\end{conjecture}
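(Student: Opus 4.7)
The plan is to first reformulate the conjecture as a statement about tail counts. Writing $N^a_\ell = \#\{k : a_k \geq \ell\}$, and $N^b_\ell$ similarly, the identity $\sum_{k=j}^m a_k = \sum_{\ell \geq 1} \min(m-j+1,\ N^a_\ell)$ for sorted $(a_k)$ rewrites the top partial-sum inequality as $\sum_\ell \min(r, N^a_\ell) \geq \sum_\ell \min(r, N^b_\ell)$ for all $1 \leq r \leq m$, while the bottom inequality becomes the analogous statement with $\min(r, \cdot)$ replaced by $\max(0, \cdot - (m-j))$. Thus the conjecture asks for a uniform dominance of $N^a$ over $N^b$ when integrated against a family of concave and convex cutoffs applied to the tail counts.

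The two main tools for attacking this are the polynomiality criterion \eqref{eq:multiplicity_dominance}, which says $\#\{k : \ell \mid a_k\} \geq \#\{k : \ell \mid b_k\}$ for all $\ell \geq 2$, and the power-sum inequalities of \Cref{lem:cgf_ineqs}(v), which give $\sum a_k^d \geq \sum b_k^d$ for $d=1$ and all positive even $d$. My first attempt would be a Farkas-type argument: express the cutoff function $\min(r, N)$ (respectively $\max(0, N-s)$), restricted to nonnegative integer inputs bounded by $m$, as a nonnegative combination of the monomials $N, N^2, N^4, \ldots$ together with divisibility indicators suggested by \eqref{eq:multiplicity_dominance}. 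If such combinations exist that translate correctly between $a$-multiplicities and $N$-counts, the conjecture follows. A parallel approach would be to induct on $\sum a_k + \sum b_k$, peeling off a matched pair $(a_j, b_k)$ identified via the largest prime power dividing some $b_k$; by \eqref{eq:multiplicity_dominance} such a pair always exists, and the residual quotient remains in $\Phi^+$.

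The main obstacle is that pointwise (Gale-order) dominance fails, per \Cref{rem:cgf_not_pointwise}, so no greedy pairing argument can succeed directly. In that example one must compensate for $a_3 = 3 < b_3 = 4$ using surplus at the extremes, and the conjecture precisely captures the minimal aggregate form of this compensation. A secondary difficulty is the absence of odd power-sum inequalities for $d \geq 3$: Karamata's inequality on the cone of all convex increasing functions is therefore unavailable, and bridging the gap requires exploiting integrality of the $a_k, b_k$ and the explicit nonnegativity of specific coefficients $c_k$ from \Cref{lem:kth_term}. In view of these obstructions, I would also verify the conjecture on the principal families listed in the introduction—MacMahon's plane partition boxes, hook-length $q$-analogues, Hilbert series of coinvariant rings—to look for a common structural mechanism (such as a distinguished sign pattern among cumulants, or a monomial-ordering compatible with the Gale order on a quotient by a subgroup) that may generalize to all of $\Phi^+$.
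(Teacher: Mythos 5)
This statement is labeled a \emph{conjecture} in the paper, and the authors do not prove it. What the paper offers is only partial evidence: the $\ell=1$ case follows from \Cref{lem:cgf_ineqs}(\ref{part:ends}) (namely $a_1 \geq b_1$), the $\ell=m$ case from the $d=1$ instance of \eqref{eq:ineq_d_powers} (namely $\sum a_k \geq \sum b_k$), and the rest rests on extensive computational verification (all $f$ of degree $\leq 42$, and all pairs of $7$-element multisets with entries $\leq 15$). There is no argument to compare yours against.

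Your proposal is also not a proof: you sketch two strategies (a Farkas-type expansion of the cutoff functions in a cone generated by even power sums and divisibility indicators, and an inductive peeling argument) but execute neither, and you then explain why each is likely to fail in its naive form. Those diagnoses are correct and align with what the paper's evidence suggests. The failure of Gale (pointwise) dominance, per \Cref{rem:cgf_not_pointwise}, does rule out any greedy pairing argument, and the absence of odd power-sum inequalities for $d \geq 3$ does mean Karamata applied to the known cumulant inequalities will not reach the full cone of convex cutoffs needed for majorization. Your observation that the conjecture is precisely the minimal aggregate compensation for failures of Gale order is a useful heuristic, but the inductive peeling idea in particular needs more care: peeling off a single matched pair $(a_j, b_k)$ does not obviously leave a quotient in $\Phi^+$, since nonnegativity of coefficients can fail even when the polynomiality criterion \eqref{eq:multiplicity_dominance} is preserved (cf.\ \eqref{eq:m3}). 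If you want to pursue this, the honest next step is to treat it as the open problem it is; verifying on the families you name is reasonable groundwork, but none of what you've written closes the gap for general $f \in \Phi^+$.
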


  The majorization inequalities hold for $\ell=1$ by
  \Cref{lem:cgf_ineqs}(ii) and $\ell=m$ by
\eqref{eq:ineq_d_powers}.  These majorization 
inequalities have also been checked for multisets with $7$ elements
and largest entry at most $15$. Out of the $\binom{15+7-1}{7}^2 =
13521038400$ pairs of multisets, $70653669$ or roughly $0.5\%$ yield
cyclotomic generating functions.  Of these, $2713$ do not satisfy $a_k
\geq b_k$ for all $k$, or roughly $0.004\%$ of the cyclotomic
generating functions. Each of these nonetheless satisfy the
majorization inequalities in \Cref{conj:majorization}. Moreover,
\Cref{conj:majorization} holds for all $f$ of degree $\leq 42$,
of which there are $10439036$. Finally, the majorization condition
is preserved under multiplication.

\begin{remark}
  The affirmative answer to \Cref{conj:majorization} together with
  Karamata's inequality would give
  $\sum_{k=1}^\ell \psi(a_k) \geq \sum_{k=1}^\ell \psi(b_k)$
  and $\sum_{k=\ell}^m \psi(a_k) \geq \sum_{k=\ell}^m \psi(b_k)$
  for every non-decreasing convex function $\psi \colon [1, \infty) \to \bR$,
  in particular strengthening \eqref{eq:ineq_d_powers} and leading to
  more necessary conditions for CGFs.
\end{remark}

\subsection{CGFs with a fixed denominator}\label{ssec:CGF_denom}

In a different direction, one may fix the denominator
$\prod_{k=1}^m [b_k]_q$ and consider which
numerators $\prod_{k=1}^m [a_k]_q$
yield cyclotomic generating functions. Fixing the multiset
$B=\{b_1, \ldots, b_m\}$, let $G_{B}$ be the graph whose
vertices are multisets $\{a_1, \ldots, a_m\}$ for which
$\prod_{k=1}^m [a_k]_q/[b_k]_q \in \Phi^{+}$
and two vertices are connected by an edge if their
multisets differ by a single element.

\begin{lemma}\label{lem:cgf_graph}
  Fix a multiset $B=\{b_1, \ldots, b_m\}$ of positive integers.
  Suppose $\{a_1, \ldots, a_m\}$ and $\{a_1', \ldots, a_m'\}$
  are multisets of positive integers such that both 
  $\prod_{k=1}^m [a_k]_q/[b_k]_q$ and   $\prod_{k=1}^m [a_k']_q/[b_k]_q
  \in \Phi^{+}$ are cyclotomic generating functions.
  Then there exists a multiset $\{h_1, \ldots, h_m\}$ of positive
  integers such that for all $1 \leq i \leq m$,
    \[ \frac{\prod_{k=1}^i [h_k]_q \prod_{k=i+1}^m [a_k]_q}
               {\prod_{k=1}^m [b_k]_q} \in \Phi^{+} \qquad \text{and} \qquad
        \frac{\prod_{k=1}^i [h_k]_q \prod_{k=i+1}^m [a_k']_q}
               {\prod_{k=1}^m [b_k]_q} \in \Phi^{+}. \]
  
  \begin{proof} By long division, one may observe $[xy]_q/[y]_q =
[x]_{q^y} \in \Phi^{+}$.  So, we may replace any $[a_{k}]_q$ in the
numerator of a cyclotomic generating function with $[\ell a_{k}]_q$
for any positive integer $\ell$ and get another cyclotomic generating
function. Hence, defining $h_k = a_k a_k'$ for $1\leq k\leq m$ has the
desired property.  \end{proof}
\end{lemma}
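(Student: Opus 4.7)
The plan is to exhibit a single multiset $\{h_1, \ldots, h_m\}$ that simultaneously interpolates between the two given numerators, and then verify that each intermediate ratio remains in $\Phi^+$ by multiplicative closure.

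The key building block is the telescoping identity
\[
  [xy]_q = \frac{1-q^{xy}}{1-q} = \frac{1-q^x}{1-q}\cdot\frac{1-q^{xy}}{1-q^x} = [x]_q \cdot [y]_{q^x}
\]
valid for any positive integers $x,y$. The second factor $[y]_{q^x} = 1 + q^x + q^{2x} + \cdots + q^{(y-1)x}$ has all complex roots on the unit circle (they are precisely the $xy$-th roots of unity that are not $x$-th roots of unity), so by characterization (i) of \Cref{thmdef:cgf_equivalences}, $[y]_{q^x} \in \Phi^+$.

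With this in hand, I would set $h_k := a_k a_k'$ and prove both families of claimed ratios lie in $\Phi^+$ by induction on $i$. At step $i$ of the first family, I would multiply the CGF at step $i-1$ by $[a_i']_{q^{a_i}} \in \Phi^+$; by the identity above, this converts the numerator factor $[a_i]_q$ into $[a_i]_q \cdot [a_i']_{q^{a_i}} = [a_i a_i']_q = [h_i]_q$, and the resulting polynomial remains in $\Phi^+$ since $\Phi^+$ is closed under multiplication (the condition that every complex root is a root of unity or zero is preserved under products, and products of polynomials with nonnegative integer coefficients remain so). The second family follows by the symmetric argument, starting from $\prod_{k=1}^m [a_k']_q/[b_k]_q$ and multiplying at step $i$ by $[a_i]_{q^{a_i'}}$ to introduce $[h_i]_q$ on that side.

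There is no significant obstacle: the content of the lemma reduces to the symmetric choice $h_k = a_k a_k'$ together with the two elementary facts that $[y]_{q^x} \in \Phi^+$ and that $\Phi^+$ is multiplicatively closed. Geometrically, the lemma says that any two vertices of $G_B$ are connected through a common ``upper'' vertex, and the constructive choice $h_k = a_k a_k'$ makes this completely transparent.
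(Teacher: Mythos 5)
Your proof is correct and uses essentially the same approach as the paper: the key observation $[xy]_q/[x]_q = [y]_{q^x} \in \Phi^+$, the choice $h_k = a_k a_k'$, and multiplicative closure of $\Phi^+$. You spell out the inductive step more explicitly than the paper's terse "we may replace any $[a_k]_q$ with $[\ell a_k]_q$," but the mathematical content is the same.
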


\begin{corollary}
The graph $G_{B}$ is nonempty, connected and has diameter at most $2m$.
\end{corollary}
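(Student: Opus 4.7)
The plan is short since almost all of the work has already been done in \Cref{lem:cgf_graph}. First I would dispatch nonemptiness: taking $a_k = b_k$ for each $k$ gives the multiset $B$ itself, for which $\prod_{k=1}^m [a_k]_q/[b_k]_q = 1 \in \Phi^+$, so $B$ is a vertex of $G_B$.

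For connectedness and the diameter bound, fix arbitrary vertices $A = \{a_1, \ldots, a_m\}$ and $A' = \{a_1', \ldots, a_m'\}$ of $G_B$. I would apply \Cref{lem:cgf_graph} to produce a common ``meeting point'' $H = \{h_1, \ldots, h_m\}$ (the lemma's proof shows $h_k = a_k a_k'$ works). The lemma guarantees that for every $0 \leq i \leq m$, both of the multisets
\[ A^{(i)} := \{h_1, \ldots, h_i, a_{i+1}, \ldots, a_m\} \qquad \text{and} \qquad {A'}^{(i)} := \{h_1, \ldots, h_i, a_{i+1}', \ldots, a_m'\} \]
give rational products that lie in $\Phi^+$, hence are vertices of $G_B$. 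Since $A^{(i)}$ and $A^{(i+1)}$ differ only by replacing $a_{i+1}$ with $h_{i+1}$, they are adjacent in $G_B$, so $A = A^{(0)} \sim A^{(1)} \sim \cdots \sim A^{(m)} = H$ is a path of length $m$ in $G_B$. An analogous path of length $m$ runs from $H$ to ${A'}^{(0)} = A'$, and concatenating the two yields a walk of length at most $2m$ from $A$ to $A'$. This simultaneously establishes connectedness and the diameter bound.

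There is no real obstacle here: the only thing I would verify carefully is that consecutive vertices in the chain genuinely differ in exactly one multiset entry (which is immediate from the construction, since we swap out one $a_{i+1}$ for one $h_{i+1}$ and leave the rest untouched), and that the walk may be shortened to a simple path by deleting any repeated vertices without increasing its length, so the diameter bound of $2m$ continues to hold.
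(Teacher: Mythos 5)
Your proof is correct and is exactly what the paper intends (the paper offers no written proof of this corollary, treating it as an immediate consequence of \Cref{lem:cgf_graph}). Your nonemptiness argument via $a_k = b_k$ is the natural one, your chaining of $A = A^{(0)}, A^{(1)}, \ldots, A^{(m)} = H = A'^{(m)}, \ldots, A'^{(0)} = A'$ uses the lemma precisely as designed, and you correctly flag the one subtlety — consecutive multisets may coincide if $h_{i+1} = a_{i+1}$, in which case the walk is not a path but can be collapsed to one of no greater length, so the diameter bound still holds.
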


\section{Asymptotic considerations}\label{sec:asymcgfs}

Given a sequence of cyclotomic generating functions and their
corresponding random variables, we can put their distributions in one
common frame of reference by standardizing them to have mean 0 and
standard deviation 1.  Under what conditions does such a sequence
converge?  Can we classify all possible standardized limiting
distributions of random variables coming from CGFs?  Both problems
have been completely solved for certain families of CGFs coming from
$q$-hook length formulas, but the complete classification is not known,
see \cite[Open Problem 1.19]{BS-asymptotics}.  In several ``generic''
regimes, they are asymptotically normal
\cite[Thm.~1.7]{BKS-asymptotics}, \cite[Thm.~1.13]{BS-asymptotics}. In
mildly degenerate regimes, they are related to independent sums of
uniform distributions \cite[Thm.~1.8]{BS-asymptotics}.  Our aim in
this section is to give another ``generic'' asymptotic normality
criterion for sequences of CGFs, which is sufficient to quickly
identify the limit in many cases of interest.

Throughout the rest of the section, we will use the following
notation.  Let $a^{(N)}$ and $b^{(N)}$ for $N=1,2,\dots$ denote two
sequences of multisets of positive integers of the same size.
By \Cref{lem:cgf_ineqs}(ii), we can always assume the values in
$a^{(N)}$ are strictly greater than 1.  The
multisets $b^{(N)}$ may contain 1's in order to have the same size as
$a^{(N)}$.  Let $f_{N}(q)= \prod_{a \in a^{(N)}} [a]_q/\prod_{b \in
b^{(N)}} [b]_q$, and let $\cX_N$ be the random variable associated to
$\bE[q^{\cX_N}] =f_{N}(q)/f_{N}(1)$.  We will denote the $d$th moment
and cumulant of $\cX_{N}$ by $\mu_d^{(N)}$ and $\kappa_{d}^{(N)}$ for
$N=1,2,\ldots$ respectively.  Similarly, the standard deviation and
mean of $\cX_{N}$ are denoted by $\sigma^{(N)}$ and $\mu^{(N)}$.
Recall the notation for uniform sum distributions $\cS_{\mathbf{t}}$
and $p$-norms from \Cref{sub:uniform.sums}.

\begin{theorem}\label{thm:q_prod_an}
 Let $a^{(N)}$ be a sequence of multisets of positive integers where
$a \geq 2$ for each $a \in a^{(N)}$.  Let $f_N(q)= \prod_{a \in
a^{(N)}} [a]_q$ and $\cX_{N}$ be the associated sequence of CGF
polynomials and random variables.

\begin{enumerate}[(i)]
    \item The sequence of random variables $\cX_1, \cX_2, \ldots$ is
      asymptotically normal if
      \begin{equation}\label{eq:q_prod_an}
        \sum_{a \in a^{(N)}} \left(\frac{a}{\max a^{(N)}}\right)^4 \to \infty.
      \end{equation}
    \item Suppose the cardinality of the multisets $a^{(N)}$ is
    bounded by some $m$ and
      $\max a^{(N)} \to \infty$. Then $\cX_1^*, \cX_2^*, \ldots$
      converges in distribution if and only if the rescaled multisets
      $a^{(N)}/|a^{(N)}|_2 \in \widetilde{\ell}_{\leq m}$
      converge pointwise to some multiset $\mathbf{t} \in \widetilde{\ell}_{\leq m}$
      in the sense of \cite[\S3.2]{BS-asymptotics}. In that case,
      the limiting distribution is the standardized uniform sum $\cS_{\mathbf{t}}^*$.  
    \item Suppose the cardinality of $a^{(N)}$ is bounded and $\max a^{(N)}$
      is also bounded. Then $\cX_1^*, \cX_2^*, \ldots$ converges in distribution if
      and only if $a^{(N)}$ is eventually a constant multiset.
  \end{enumerate}

  \begin{proof} The polynomial $f_N(q) = \prod_{a \in a^{(N)}} [a]_q$
can also be expressed in rational form as $=\prod_{a \in a^{(N)}}
[a]_q/[1]_{q}$. Thus, we obtain the corresponding sequence of
multisets $b^{(N)}=(1,1,\dots,1)$ from the denominators where the size
of $b^{(N)}$ equals the size of $a^{(N)}$ as multisets.  By
\Cref{thm:prod_cumulants} and the scaling property of cumulants, the
standardized cumulants of $\cX_N$ are given by
\begin{equation}\label{eq:seq.norm.cum}
      (\kappa_d^{(N)})^*
        = \frac{\kappa_d^{(N)}}{(\sigma^{(N)})^{d/2}}
        = \frac{\frac{B_d}{d} \sum_{a \in a^{(N)}} (a^d - 1)}
           {\left(\frac{B_2}{2} \sum_{a \in a^{(N)}} (a^2 -
	   1)\right)^{d/2}}.
\end{equation}
Since $a \geq 2$ for all $a \in a^{(N)}$, $\frac{1}{2} a^d \leq a^d -
1 \leq a^d$, so $a^d - 1 = \Theta(a^d)$ uniformly. Hence for $d \geq
2$ even,
\[
(\kappa_d^{(N)})^* = \Theta\left(\frac{\sum_{a \in
a^{(N)}} a^d}{(\sum_{a \in a^{(N)}} a^2)^{d/2}}\right) =
\Theta\left(\frac{|a^{(N)}|_d^d}{|a^{(N)}|_2^d}\right) =
\Theta\left(\frac{|a^{(N)}|_d}{|a^{(N)}|_2}\right)^d.
\]

Therefore by \Cref{thm:4thcumulant.test}, asymptotic normality is
equivalent to $|a^{(N)}|_4/|a^{(N)}|_2 \to 0$, or equivalently
\[
\frac{|a^{(N)}|_2}{|a^{(N)}|_4} \to \infty.
\]
By \cite[(3.7)]{BS-asymptotics},
\[
|a^{(N)}|_4 \leq |a^{(N)}|_\infty^{1/2}\ |a^{(N)}|_2^{1/2}.
\]
Hence, squaring both sides and rearranging the factors we have 
\[
\frac{|a^{(N)}|_4}{|a^{(N)}|_\infty} \leq
\frac{|a^{(N)}|_2}{|a^{(N)}|_4}.
\]
Thus, $|a^{(N)}|_4/|a^{(N)}|_\infty \to \infty$ implies asymptotic
normality.  By Definition~\ref{def:p-norm}
\[
\left(|a^{(N)}|_4/|a^{(N)}|_\infty \right)^{4} =   \sum_{a \in a^{(N)}} \left(\frac{a}{\max a^{(N)}}\right)^4
\]
so (i) holds.

For (ii), let $\hat{a}^{(N)} \coloneqq \sqrt{12} \cdot
a^{(N)}/|a^{(N)}|_2$ be the rescaled multiset such that
$\cS_{\hat{a}^{(N)}}$ is standardized for each $N$ according to \eqref{eq:Ucont.cumulants.2}. Since $|a^{(N)}|_2
\geq |a^{(N)}|_\infty := \max a^{(N)} \to \infty$, we have by \eqref{eq:seq.norm.cum} that
\begin{align*}
(\kappa_d^{(N)})^* &=
\frac{\frac{B_d}{d} \sum_{a \in a^{(N)}} (a^d-1)}
           {\left(\frac{B_2}{2} \sum_{a \in a^{(N)}} (a^2 -1)
	   \right)^{d/2}} \\
&\sim \frac{\frac{B_d}{d} \sum_{a \in a^{(N)}} (a/|a^{(N)}|_2)^d}
{\left(\frac{B_2}{2} \sum_{a \in a^{(N)}}
(a/|a^{(N)}|_2)^2\right)^{d/2}} =
\frac{\frac{B_d}{d}}{\left(\frac{B_2}{2}\right)^{d/2}}
\frac{|\hat{a}^{(N)}|_d^d}{|\hat{a}^{(N)}|_2^{d/2}}.
\end{align*}
Here and elsewhere we write $f(N) \sim g(N)$ to mean $\lim_{N \to \infty} f(N)/g(N) = 1$.
This last expression is the $d$th cumulant of $\cS_{\hat{a}^{(N)}}$ by
\eqref{eq:Ucont.cumulants.2}.  Therefore, by the cumulant version of
\Cref{thm:moments} and \Cref{thm:frechet_shohat_converse}, we know
$\cX_1^*, \cX_2^*, \ldots$ converges in distribution if and only if
the sequence $\cS_{\hat{a}^{(N)}}$ converges in distribution, and to
the same limit. By \cite[Thm.~1.15]{BS-asymptotics},
$\cS_{\hat{a}^{(N)}}$ converges in distribution if and only if the
multisets $\hat{a}^{(N)}$ converge pointwise in the sense of
\cite[\S3]{BS-asymptotics} to some $\mathbf{t} \in \tilde{\ell}_2$, with
limiting distribution $\cS_{\mathbf{t}} + \cN(0, \sigma^2)$ where $\sigma =
\sqrt{1 - |\mathbf{t}|_2^2/12}$. Since the cardinality of $a^{(N)}$ is bounded,
$|\hat{a}^{(N)}|_2^2 = 12$ for each $N$, so $|\mathbf{t}|_2^2 = \lim_{N \to
\infty} |a^{(N)}|_2^2 = 12$.  This implies $\sigma =0$ by definition,
so the limiting distribution of the standardized random variables
$\cX_N^*$ is just the standardized generalized uniform sum
$\cS_{\mathbf{t}}=\cS_{\mathbf{t}}^*$ as given in the statement.

    For (iii), if both the cardinality of $a^{(N)}$ and
$|a^{(N)}|_\infty$ are bounded, there are only finitely many possible
choices for $a^{(N)}$.  Hence, convergence in distribution happens if
and only if the sequence $a^{(N)}$ is eventually constant.
\end{proof}
\end{theorem}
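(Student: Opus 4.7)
The plan is to reduce everything to the cumulant formula of \Cref{thm:prod_cumulants} combined with the fourth-cumulant test \Cref{thm:4thcumulant.test} for part (i), and to a careful comparison with uniform sums for part (ii). Writing $f_N(q) = \prod_{a \in a^{(N)}} [a]_q$ with formal denominator multiset $b^{(N)} = (1, \ldots, 1)$ of matching cardinality, the standardized $d$-th cumulant becomes
\[
  (\kappa_d^{(N)})^{*} = \frac{\tfrac{B_d}{d}\sum_{a \in a^{(N)}}(a^d-1)}{\left(\tfrac{B_2}{2}\sum_{a \in a^{(N)}}(a^2-1)\right)^{d/2}}.
\]
Since $a \geq 2$ forces $a^d - 1 = \Theta(a^d)$ uniformly, for even $d$ the standardized cumulants satisfy $(\kappa_d^{(N)})^{*} = \Theta\bigl((|a^{(N)}|_d/|a^{(N)}|_2)^d\bigr)$, so \Cref{thm:4thcumulant.test} reduces asymptotic normality to the condition $|a^{(N)}|_4/|a^{(N)}|_2 \to 0$.

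For (i), I would apply the elementary interpolation bound $|a^{(N)}|_4^4 = \sum a^4 \leq |a^{(N)}|_\infty^2\, |a^{(N)}|_2^2$, which rearranges to $|a^{(N)}|_4/|a^{(N)}|_\infty \leq |a^{(N)}|_2/|a^{(N)}|_4$. The hypothesis $\sum_{a \in a^{(N)}} (a/\max a^{(N)})^4 \to \infty$ is exactly the statement $|a^{(N)}|_4/|a^{(N)}|_\infty \to \infty$, so the bound forces $|a^{(N)}|_2/|a^{(N)}|_4 \to \infty$ and hence asymptotic normality.

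For (ii), I would introduce the rescaled multiset $\hat{a}^{(N)} \coloneqq \sqrt{12}\, a^{(N)}/|a^{(N)}|_2 \in \widetilde{\ell}_{\leq m}$, chosen so that $\cS_{\hat{a}^{(N)}}$ is standardized by \eqref{eq:Ucont.cumulants.2}. Because $\max a^{(N)} \to \infty$ and cardinalities are bounded we have $|a^{(N)}|_2 \to \infty$, so the ``$-1$'' corrections in the cumulant formula are asymptotically negligible and a term-by-term comparison yields $(\kappa_d^{(N)})^{*} \sim \kappa_d^{\cS_{\hat{a}^{(N)}}}$ for every $d \geq 2$. Invoking the converse Fréchet--Shohat result \Cref{thm:FS_iff} on the CGF side together with its uniform-sum analogue implicit in \cite[Thm.~1.15]{BS-asymptotics}, convergence in distribution of $\cX_N^{*}$ is equivalent to convergence in distribution of $\cS_{\hat{a}^{(N)}}$. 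By \cite[Thm.~1.15]{BS-asymptotics} the latter is equivalent to pointwise convergence $\hat{a}^{(N)} \to \mathbf{t}$ in $\widetilde{\ell}_{\leq m}$. Bounded cardinality forces $|\hat{a}^{(N)}|_2^2 = 12$ exactly, so $|\mathbf{t}|_2^2 = 12$, the residual Gaussian term has variance $1 - |\mathbf{t}|_2^2/12 = 0$, and the limiting distribution is precisely $\cS_{\mathbf{t}}^{*} = \cS_{\mathbf{t}}$.

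Part (iii) then follows by a finite-state compactness argument: only finitely many multisets $a^{(N)}$ are possible, so only finitely many standardized CGF distributions appear, and weak convergence in a discrete set of probability measures forces eventual constancy of the distributions; distinct multisets yield distinct CGF polynomials and hence distinct standardized distributions by the uniqueness portion of \Cref{thmdef:cgf_equivalences}, so $a^{(N)}$ itself must be eventually constant. The main obstacle is part (ii): one must carefully justify the two-sided transfer of convergence in distribution between the CGF random variables and their uniform-sum approximants, which depends on applying the converse Fréchet--Shohat phenomenon on both sides and identifying the limit via pointwise convergence of rescaled multisets.
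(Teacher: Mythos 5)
Your proposal follows the paper's own proof essentially step for step: the rational form with denominator multiset $(1,\ldots,1)$, the standardized-cumulant formula from \Cref{thm:prod_cumulants}, the $\Theta$-estimate from $a\geq 2$, the fourth-cumulant test \Cref{thm:4thcumulant.test}, the interpolation bound $|a^{(N)}|_4 \leq |a^{(N)}|_\infty^{1/2}|a^{(N)}|_2^{1/2}$ for part (i), the rescaled multisets $\hat{a}^{(N)} = \sqrt{12}\,a^{(N)}/|a^{(N)}|_2$ and two-sided transfer via Fr\'echet--Shohat and \cite[Thm.~1.15]{BS-asymptotics} for part (ii), and the finiteness argument for part (iii). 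One small caveat: in part (iii) you cite the uniqueness portion of \Cref{thmdef:cgf_equivalences} to conclude distinct multisets give distinct standardized distributions, but that uniqueness only concerns the rational-form factorization of a fixed polynomial, not the injectivity of the multiset-to-standardized-distribution map; the latter is better justified by noting that the standardized cumulants $\kappa_d/\sigma^d$ determine the power sums $\sum a^d$ up to a common scale and hence (for bounded positive-integer multisets) determine the multiset itself, which is the implicit step the paper leaves unstated.
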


Building on the case of finite products of $q$-integers above, we can now
address the characterization of asymptotic normality for basic CGFs
stated in the introduction. Intuitively, the idea is that if the
numerator of the rational form of the CGF leads to asymptotic
normality, then the rational form does as well since it is dominated
by its numerator.

\begin{proof}[Proof of \Cref{thm:diaconis}] Observe that for any positive
integer $n$, the polynomial $[n]_q/n$ is the probability generating
function of a discrete uniform random variable $\cU_n$ supported on
$\{0, 1, \ldots, n-1\}$.  Let $\cA_{N} = \sum_{a \in a^{(N)}} \cU_a$\
and $\cB_{N} = \sum_{b \in b^{(N)}} \cU_b$ denote the CGF
distributions corresponding to the numerator and denominator of
$f_N(q)= \prod_{a \in a^{(N)}} [a]_q/\prod_{b \in b^{(N)}} [b]_q$.  By
\Cref{thm:prod_cumulants}, $\sigma_{\cA_{N}}^2 = \frac{1}{12}\sum_{a \in a^{(N)}}
(a^2 - 1)$ and $\sigma_{\cB_{N}}^2 = \frac{1}{12}\sum_{b \in b^{(N)}} (b^2 -
1)$.

Let $c_N \coloneqq \sigma_{\cB_{N}}/\sigma_{\cA_{N}}$, so each
$c_{N}\geq 0$.  By the hypothesis in \eqref{eq:diaconis.1},
$$
\limsup_{N \to \infty} c_N^{2} = \limsup_{N \to \infty} 
\frac{\sum_{b \in b^{(N)}} (b^2 - 1)} {\sum_{a \in a^{(N)}} (a^2 - 1)}
< 1.
$$
Recall that convergence in distribution
of real-valued random variables can be metrized using the L\'evy
metric. Therefore, it suffices to show that every subsequence of
$\cX_1, \cX_2, \ldots$ itself has an asymptotically normal
subsequence.  Hence, without loss of generality, we may assume that
the sequence $c_N$ converges to some $0 \leq c < 1$.

By construction, we have the following equality in distribution:
\begin{equation}\label{eq:di.sums}
\cX_N + \cB_{N} = \cA_{N},
\end{equation}
where all summands are independent.  It follows from independence that
\[ \sigma_{\cX_N}^2 =
\sigma_{\cA_{N}}^2 - \sigma_{\cB_{N}}^2.
\]
By standardizing random variables and simplifying \eqref{eq:di.sums}, we
have
\begin{equation}\label{eq:diaconis.2} \cX_N^*\ \sqrt{1-c_N^2} + \cB_{N}^*\ c_N = \cA_{N}^*.
\end{equation}
In terms of characteristic functions, \eqref{eq:diaconis.2}
gives \begin{equation}\label{eq:diaconis.3} \phi_{\cX_N^*}(t) =
\frac{\phi_{\cA_{N}^*}\left(t\, /\sqrt{1-c_N^2} \right)}
{\phi_{\cB_{N}^*}\left(t \, c_N/\sqrt{1-c_N^2} \right)}.
\end{equation}

Since the hypothesis in \eqref{eq:diaconis.1b} implies
\eqref{eq:q_prod_an} holds, \Cref{thm:q_prod_an} says the sequence
$\cA_{N}$ is asymptotically normal, so by L\'evy's continuity
\Cref{thm:levy}, \[ \lim_{N \to \infty} \phi_{\cA_{N}^*}(t) = \exp(-t^2/2) \]
for all $t \in \bR$. This convergence is in fact uniform on bounded
subsets of $\bR$ (see e.g.~\cite[Exercise~26.15(b)]{MR1324786}).
Consider the cases for $0\leq c<1$.

    \begin{enumerate}[(i)]
      \item Suppose $c>0$. By \Cref{lem:cgf_ineqs}(\ref{part:ends}), we have
        $|b^{(N)}|_{\infty } \leq |a^{(N)}|_{\infty}$. This observation and the fact that $c>0$
        imply that \eqref{eq:diaconis.1b} holds with $a^{(N)}$ replaced by $b^{(N)}$.
        Hence
          \[ \lim_{N \to \infty} \phi_{\cB_{N}^*}(t) = \exp(-t^2/2). \]
        Since $c<1$ and convergence is uniform on bounded subsets,
        \eqref{eq:diaconis.3} gives
          \[ \lim_{N \to \infty} \phi_{\cX_N^*}(t)
              = \frac{\exp(-t^2/ 2(1-c^2))}{\exp(-t^2 c^2/ 2(1-c^2))}
              = \exp(-t^2/2). \]
        Therefore, by L\'evy's continuity theorem again, $\cX_1,
	\cX_2, \ldots$ is asymptotically normal.

\bigskip

      \item Suppose $c=0$. Note that the variance of $\cB_{N}^* c_N$
is $c_N^{2}$ and the mean is 0. Hence, $c_N \to c = 0$ implies
$\cB_{N}^* c_N$ converges to the constant random variable $0$ by
Chebyshev's Inequality. Thus,
  \[ \lim_{N \to \infty} \phi_{\cB_{N}^* c_N}(t) = 1, \]
and the result follows from \eqref{eq:diaconis.3} and the
calculations from the previous case.\qedhere
    \end{enumerate}
  \end{proof}

\begin{example}
Consider a sequence of $q$-binomial coefficients
\[
f_{N}(q) = \binom{n}{k}_q=\prod_{j=1}^{k}\frac{[n-k+j]_{q}}{[j]_{q}}
\]
where $k,n$ represent sequences indexed by $N$.  Assume both $k$ and
$n-k$ approach infinity as $N \to \infty$.  In the notation of the proof
above, we have $\cA_{N} = \sum_{j=1}^{k} \cU_{n-k+j}$,\ $\cB_{N} =
\sum_{j=1}^{k} \cU_j$, and
\begin{align*}
c_{N}^{2}=\frac{\sigma_{\cB_{N}}^2}{\sigma_{\cA_{N}}^2} =
\frac{\sum_{j=1}^k j^2 - 1}{\sum_{j=n-k+1}^n j^2 - 1} \sim
\frac{k^3}{n^3 - (n-k)^3} = \frac{(k/n)^3}{1 - (1 -
k/n)^3}.
\end{align*}
Let $x = k/n$.  Since both $k, n-k \to \infty$ as $N \to \infty$, we
may suppose $k\leq n-k$ for all $N$ since we can replace $k$ with
$n-k$ if necessary without changing $f_{N}(q)$, so $0\leq x \leq 1/2$.
It is easy to check that $x^3/(1 - (1-x)^3) \leq 1/7$ for $0 \leq x
\leq 1/2$, so that \eqref{eq:diaconis.1} holds. We also see

\begin{align*}
    \frac{\sum_{j=n-k+1}^n j^4}{n^4} &\sim \frac{n^5 - (n-k)^5}{n^4}\\
        &= \frac{k (5n^4 - 10n^3 k + 10n^2 k^2 - 5n k^3 + k^4)}{n^4} \\
        &= k (5 - 10x + 10x^2 - 5x^3 + x^4),
\end{align*}
  so \eqref{eq:diaconis.1b} holds as $k \to \infty$ since $(5 - 10x +
10x^2 - 5x^3 + x^4)$ is positive for all real values in the range
$0\leq x\leq 1/2$. Thus, \Cref{thm:diaconis} applies to show the
sequence of CGF random variables corresponding to $f_{N}(q)$ is
asymptotically normal provided $k, n-k \to \infty$. The details of
this calculation are left implicit in \cite{MR964069}.
\end{example}

\section{Analytic considerations}\label{sec:analytic}

In this section, we consider the problem of classifying all
standardized CGF distribution via analytic considerations of their
characteristic functions $\phi_{\cX^*}(t)$ and their second
characteristic functions, $\log \phi_{\cX^*}(t)$.  Given the many
different characterizations of CGFs introduced in
\Cref{thmdef:cgf_equivalences}, we have a rich set of tools for
studying these complex functions.  In particular, we will show
that the limiting standardized characteristic functions are
entire, see \Cref{cor:entire_limits}.  This allows us to complete the proof of
\Cref{thm:FS_iff}.  We begin by spelling out the connections between
the CGFs in rational and cyclotomic form with the first and second
(standardized) characteristic functions.

Recall from \Cref{sub:prob.and.cum} that for any CGF $f(q)$ with
corresponding random variable $\cX$, we have $\bE[q^{\cX}] =
f(q)/f(1)$, so the characteristic function of 		   $\cX$ is 
\[
\phi_{\cX}(t) \coloneqq \bE[e^{it\cX}] = f(e^{it})/f(1), 
\]
and the cumulant generating function of $\cX$ is 
\[ \log \phi_{\cX}(t) = \log \bE[e^{it\cX}] = \sum_{d=1}^\infty
\kappa_d(f) \frac{(it)^d}{d!}.
\]
Furthermore, $\kappa_d(f) = 0$ for $d>1$ odd, $\mu =\kappa_1(f)$ is the
mean of $\cX$ and $\sigma^{2}=\kappa_2(f)$ is its variance.

Note, both $f(q)$ and $\alpha q^{\beta}f(q)$ for positive integers
$\alpha, \beta$
give rise to the same standardized random variable $\cX^*$ so in order
to study all standardized CGF distributions, it suffices to assume
$f(q) \in \Phi^{+}$.  Thus, if
$f(q)=\Phi_{i_{1}}(q)\Phi_{i_{2}}(q)\cdots \Phi_{i_{k}}(q) = \prod_{j=1}^k
\frac{[a_j]_q}{[b_j]_q}$, 
then we can extend the corresponding characteristic function to the
complex plane by setting 
\begin{equation}\label{eq:char.poly.in.f}
\phi_{\cX}(z) = f(e^{iz})/f(1)= \Phi_{i_{1}}(e^{iz})\Phi_{i_{2}}(e^{iz})\cdots
\Phi_{i_{k}}(e^{iz})/f(1)
= \prod_{j=1}^k \frac{[a_j]_{e^{iz}}}{[b_j]_{e^{iz}}}
\end{equation}
and 
\begin{align}\label{eq:std.char.poly}
\phi_{\cX^*}(z) & =\bE[e^{iz\cX^*}] = \bE[e^{iz(X-\mu)/\sigma}] =
e^{-iz\mu/\sigma} \bE[e^{izX/\sigma}] \\
&= e^{-iz\mu/\sigma} f(e^{iz/\sigma})/f(1) = e^{-iz\mu/\sigma}
\phi_{\cX}(z/\sigma ). \nonumber
\end{align}
Furthermore, we have a convergent power series
representation of the cumulant generating function which can be
expressed in terms of the multisets in the rational form of $f(q)$ as
\begin{align}\label{eq:second.char.function}
\log \phi_{\cX^*}(z) &= \sum_{d=1}^\infty (-1)^{d}\kappa_{2d}^{*}(f)
\frac{z^{2d}}{(2d)!} \\
&= \sum_{d=1}^\infty
(-1)^{d}\left(\frac{B_{2d}\sum_{k=1}^m (a_k^{2d} - b_k^{2d})}{2d((B_{2}/2) \sum_{k=1}^m (a_k^2 - b_k^2))^{d}}  \right)
\frac{z^{2d}}{(2d)!} \nonumber
\end{align}
since $\kappa_{2d}^{*}(f) = \kappa_{2d}(f)/\kappa_{2}(f)^{d}$ and
$\kappa_{d}(f)$ is given by \eqref{eq:prod_cumulants} for all positive
integers $d$.  Furthermore, $(-1)^{d}\kappa_{2d}^{*}(f)$ is
a non-positive real number by \Cref{cor:kappa_alternates}.

\subsection{CGF characteristic functions}

Consider the set of standardized characteristic functions of random
variables associated to cyclotomic generating functions.   As
mentioned above, it suffices to consider only basic CGFs, so consider
the set of all standardized CGF characteristic functions on the
complex plane, 
\[
\cC_{\CGF} \coloneqq \{\phi_{\cX^*}(z) : \bE[q^{\cX}] = f(q)/f(1)\text{ for }f(q) \in \Phi^{+} \}.
\]
We first show that $\cC_{\CGF}$ is a \textbf{normal family} of
continuous functions.  This means that every infinite sequence
$(\phi_{\cX_{N}^*}(z): N=1,2,\dots )$ in $\cC_{\CGF}$ contains a
subsequence which converges uniformly on compact subsets of
$\mathbb{C}$.

\begin{theorem}\label{thm:normal}
The set $\cC_{\CGF}$ is a normal family of entire functions.

\begin{proof}
Given $\phi_{\cX^*}(z) \in \cC_{\CGF}$, let $f(q) \in \Phi^{+}$ be the
associated CGF, and let $\cX$ be the corresponding random variable with
$\bE[q^{\cX}] = f(q)/f(1)$, mean $\mu$ and standard deviation
$\sigma$.  Let $\tilde{f}(q) = q^{-\mu} f(q)/f(1) \in \bR_{\geq
0}[q^{\pm 1/2}]$. Then, by \eqref{eq:std.char.poly} 
\begin{equation}\label{eq:in.term.f}
\phi_{\cX^*}(z)  = e^{-iz\mu/\sigma}
f(e^{iz/\sigma})/f(1) = \tilde{f}(e^{iz/\sigma}).
\end{equation}
Thus, since $f(q) \in \Phi^{+}$ is a finite product of cyclotomic
polynomials, $\phi_{\cX^*}(z)$ is a finite sum of products of
exponential functions by \eqref{eq:char.poly.in.f} and \eqref{eq:std.char.poly}, hence it is entire.

From Montel's theorem in complex analysis, $\cC_{\CGF}$ is a normal
family of entire functions if and only if it is bounded on all complex
disks $|z| \leq R$. Hence we will bound $|\phi_{\cX^*}(z)|$ in terms
of $R$.  Note that $f(q)=1$ if and only if $\sigma =0$ by
\Cref{lem:cgf_ineqs}(iii), in which case $\phi_{\cX^*}(z)=1$ is
bounded by any function greater than 1.  So, we will assume $f(q)$ is
not constant and $\sigma >0$.  By \cite[Lem.~2.8]{HwangZacharovas},
for all real $t \geq 0$,\footnote{\cite[Lem.~2.8]{HwangZacharovas} is
incorrectly stated for all $t \in \bR$, though one of the last steps
in the argument requires $t \geq 0$.}

\begin{equation}\label{eq:hz.bound}
 \bE[e^{t\cX^*}] \leq \exp\left(\frac{3}{2} t^2 e^{2t/\sigma}\right).
\end{equation}
Since $\bE[e^{z \cX^*}] = \phi_{\cX^*}(-iz)$, we can use this
inequality to bound $|\phi_{\cX^*}(z)|$ for all complex $|z| \leq R$
as follows.

For all $|z| \leq R$, we claim that $|\tilde{f}(e^{iz/\sigma})| \leq
2\tilde{f}(e^{R/\sigma})$. Indeed, since $f(q) \in \Phi^{+}$, we have  $\tilde{f}(q) = \sum_{k=-N}^N
a_k q^{k/2}$ for $a_k \geq 0$ satisfying $a_k = a_{-k}$. Therefore,
\begin{align*}
  \left|\tilde{f}(e^{iz/\sigma})\right|
    &= \left|\sum_{k=-N}^N a_k e^{izk/2\sigma}\right|
    \leq \sum_k a_k \left|e^{izk/2\sigma}\right|
    = \sum_k a_k e^{\Re(izk/2\sigma)} \\
    &\leq \sum_k a_k e^{R|k|/2\sigma}
    \leq \sum_k a_k (e^{Rk/2\sigma} + e^{-Rk/2\sigma})
    = \sum_k a_k e^{Rk/2\sigma} + \sum_k a_{-k} e^{-Rk/2\sigma} \\
    &= 2\tilde{f}(e^{R/\sigma}).
\end{align*}
By \Cref{lem:cgf_ineqs}(iii), $\sigma^2 \geq 1/4$ is bounded away from
$0$ since we assumed $f(q)$ is not constant.  Therefore, by the claim,
\eqref{eq:in.term.f}, and \eqref{eq:hz.bound}, we have the required
uniform bound \[ |\phi_{\cX^*}(z)| = |\tilde{f}(e^{iz/\sigma})| \leq
2\tilde{f}(e^{R/\sigma}) = 2\phi_{\cX^*}(-iR) = 2\bE[e^{R\cX^*}] \leq
2\exp\left(\frac{3}{2} R^2 e^{8R}\right). \]

  \end{proof}
\end{theorem}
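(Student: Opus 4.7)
The plan is to apply Montel's theorem for holomorphic functions, which reduces normality to local uniform boundedness. The two main tasks are then to confirm that each element of $\cC_{\CGF}$ is entire, and to extract a uniform bound on closed disks $|z| \leq R$ that depends on $R$ but not on the particular CGF.

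For entireness, I would begin by writing $\phi_{\cX^*}(z) = e^{-iz\mu/\sigma} f(e^{iz/\sigma})/f(1)$ following \eqref{eq:std.char.poly}, and note that since $f(q)$ is a polynomial (a finite product of cyclotomic polynomials and, at worst, factors of $q$), the composition $f(e^{iz/\sigma})$ is entire in $z$, and multiplication by the entire function $e^{-iz\mu/\sigma}/f(1)$ preserves this. So it remains to produce a compact uniform bound. The case where $f$ is constant is trivial (here $\phi_{\cX^*} \equiv 1$), so I restrict attention to nonconstant $f \in \Phi^+$.

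The key analytic trick is to exploit the palindromic structure of CGFs. Define the Laurent polynomial in $q^{1/2}$,
\[
\tilde{f}(q) = q^{-\mu} f(q)/f(1) = \sum_{k=-N}^{N} a_k\, q^{k/2},
\]
which has nonnegative coefficients satisfying $a_k = a_{-k}$, and which equals $\phi_{\cX^*}(z)$ when evaluated at $q = e^{iz/\sigma}$. For $|z| \leq R$, the triangle inequality together with $|e^{iwk/2\sigma}| = e^{-\Im(w)k/2\sigma}$ and $a_k = a_{-k}$ yields the pointwise comparison
\[
|\tilde{f}(e^{iz/\sigma})| \;\leq\; \sum_k a_k\, e^{R|k|/2\sigma} \;\leq\; 2\,\tilde{f}(e^{R/\sigma}),
\]
which converts the problem of bounding a complex quantity into bounding a real moment-generating-function value.

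For that real bound, I would invoke the Hwang--Zacharovas estimate \cite[Lem.~2.8]{HwangZacharovas} (which gives $\bE[e^{t\cX^*}] \leq \exp(\tfrac{3}{2}t^2 e^{2t/\sigma})$ for $t \geq 0$), applied at $t = R$, giving $\tilde{f}(e^{R/\sigma}) = \bE[e^{R\cX^*}] \leq \exp(\tfrac{3}{2}R^2 e^{2R/\sigma})$. To make this bound independent of the particular CGF, I would use \Cref{lem:cgf_ineqs}(iii): when $f$ is nonconstant, $\sigma^2 \geq \mu/2 \geq 1/4$, so $1/\sigma \leq 2$ and the bound becomes $|\phi_{\cX^*}(z)| \leq 2\exp(\tfrac{3}{2}R^2 e^{4R})$, uniform over the whole family on $|z|\leq R$. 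By Montel's theorem this completes the argument. The main obstacle is precisely this last step: we need the Hwang--Zacharovas tail bound together with the variance lower bound to produce a uniform estimate, and both ingredients must be combined carefully because $\sigma$ can vary across the family. The palindromic reduction from $|z| \leq R$ to the real point $R/\sigma$ is what makes the Hwang--Zacharovas inequality applicable despite its one-sided nature.
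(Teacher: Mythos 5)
Your proposal is correct and follows essentially the same route as the paper: Montel's theorem, the palindromic Laurent form $\tilde f$ to reduce the complex bound on $|z|\le R$ to the real point $R/\sigma$, the Hwang--Zacharovas moment bound, and the uniform lower bound $\sigma^2\ge 1/4$ from \Cref{lem:cgf_ineqs}(iii). Your final exponent $e^{4R}$ is in fact the tighter (and arithmetically correct) constant; the paper's $e^{8R}$ is a looser (apparently typographical) version of the same bound, and either suffices for Montel.
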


\begin{theorem}[Converse of Frech\'et--Shohat for CGFs]\label{thm:frechet_shohat_converse}
  Suppose $\cX_1, \cX_2, \ldots$ is a sequence of random variables
  corresponding to cyclotomic generating functions such that
  $\cX_n^* \Rightarrow \cX$ for some random variable $\cX$.
  Then $\cX$ is determined by its moments and, for all $d \in \bZ_{\geq 1}$,
    \[ \lim_{n \to \infty} \mu_d^{\cX_n^*} = \mu_d^{\cX}. \]

  \begin{proof} By \Cref{thm:normal}, we may pass to a subsequence and
assume $\phi_{\cX_n^*}(z)$ converges uniformly on compact subsets of
$\bC$.  Hence, they converge to an entire function $\phi(z)$. On the
other hand, by L\'evy continuity, $\phi_{\cX_n^*}(t) \to
\phi_{\cX}(t)$ pointwise for all $t \in \bR$.  Thus, $\phi_{\cX}(t) =
\phi(t)$ for $t \in \bR$.

A priori, a characteristic function $\phi_{\cX}(t) \coloneqq
\bE[e^{it\cX}]$ exists only for $t \in \bR$. However, in this
case we have an entire function $\phi(z)$ which coincides with
$\phi_{\cX}(z)$ for all $z \in \bR$. By \cite[Thm.~7.1.1,
pp.191-193]{MR0346874}, agreement on the real line suffices to show
$\phi_{\cX}(z)$ exists and agrees with $\phi(z)$ for all $z \in
\bC$.  Since $\phi(z)$ is entire, it can be represented by a power
series that converges everywhere in the complex plane.  Hence, the
moment-generating function $\bE[e^{t\cX}]=\phi_{\cX}(-it)=\phi(-it)$ exists for all $t \in
\bR$ and it can be expressed as a convergent power series with finite
coefficients.  Hence $\cX$ has moments of all orders and is determined
by its moments by \cite[Thm.~30.1]{MR1324786}.
\end{proof}
\end{theorem}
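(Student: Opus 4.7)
The strategy exploits \Cref{thm:normal}, which establishes that the collection of standardized CGF characteristic functions $\cC_{\CGF}$ is a normal family of entire functions. First, applying Montel's theorem to the sequence $\phi_{\cX_n^*}$, I extract a subsequence converging uniformly on compact subsets of $\bC$ to some entire function $\phi$. The hypothesis $\cX_n^* \Rightarrow \cX$ combined with L\'evy's continuity theorem (\Cref{thm:levy}) gives $\phi_{\cX_n^*}(t) \to \phi_{\cX}(t)$ pointwise for $t \in \bR$. Hence $\phi$ restricted to $\bR$ coincides with the ordinary characteristic function $\phi_{\cX}$.

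Since $\phi_{\cX}$ now admits an entire extension, standard results for characteristic functions with analytic extensions (e.g.\ \cite[Thm.~7.1.1]{MR0346874}) yield $\phi_{\cX}(z) = \phi(z)$ for all $z \in \bC$. Consequently the moment generating function $M_{\cX}(t) = \phi_{\cX}(-it)$ is entire, so $\cX$ possesses moments of all orders, and a standard criterion (e.g.\ \cite[Thm.~30.1]{MR1324786}) forces $\cX$ to be determined by its moments. This handles the first assertion of the theorem.

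To obtain moment convergence for the full sequence (not merely a subsequence), I argue by contradiction: if $\phi_{\cX_n^*}$ failed to converge to $\phi_{\cX}$ uniformly on some compact set, some subsequence would stay bounded away from $\phi_{\cX}$ there; applying normality to that subsequence produces a further subsequence converging uniformly on compacta to an entire $\tilde\phi$, but then L\'evy forces $\tilde\phi|_\bR = \phi_{\cX}|_\bR$, and analytic continuation yields $\tilde\phi = \phi_{\cX}$, a contradiction. Hence the full sequence $\phi_{\cX_n^*}$ converges to $\phi_{\cX}$ uniformly on compact subsets of $\bC$. Cauchy's integral formula then upgrades this to uniform convergence of all derivatives on compacta; since the moments are related to the derivatives at $0$ by $\mu_d^{\cX_n^*} = (-i)^d \phi_{\cX_n^*}^{(d)}(0)$ and analogously for $\cX$, this yields $\mu_d^{\cX_n^*} \to \mu_d^{\cX}$ for every $d \in \bZ_{\geq 1}$.

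The main obstacle is really the input result \Cref{thm:normal}: without the uniform growth bound on $\phi_{\cX^*}(z)$ in $|z| \leq R$ that normality requires, there is no reason the limiting characteristic function should be entire, and one cannot conclude the moment determinacy of $\cX$. Once \Cref{thm:normal} is in hand, the proof is a clean package of Montel, L\'evy, analytic continuation, and Cauchy estimates; no further numerical input about the CGF structure is needed.
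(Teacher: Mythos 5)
Your proposal is correct and follows essentially the same route as the paper: invoke \Cref{thm:normal} to get a normal family, extract a subsequence converging uniformly on compacta to an entire $\phi$, use L\'evy continuity to identify $\phi|_\bR = \phi_{\cX}|_\bR$, apply the analytic-extension theorem for characteristic functions to conclude $\phi = \phi_{\cX}$ on $\bC$, and deduce that the MGF is entire so $\cX$ is moment-determined. Where you go further is in actually closing the second assertion $\mu_d^{\cX_n^*} \to \mu_d^{\cX}$: the paper's written proof stops at moment-determinacy and leaves the passage from subsequence to full sequence, and from characteristic-function convergence to moment convergence, implicit. Your subsubsequence argument (every subsequence of $\phi_{\cX_n^*}$ has a further subsequence converging uniformly on compacta to the same limit $\phi_{\cX}$, hence the whole sequence converges in that topology) plus the Cauchy-estimate upgrade to derivative convergence at $0$ is exactly the standard way to make that rigorous, and $\mu_d^{\cX_n^*} = (-i)^d \phi_{\cX_n^*}^{(d)}(0)$ finishes it. So your version is, if anything, a more complete write-up of the same argument.
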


\begin{corollary}\label{cor:entire_limits}
  If $\cX_1, \cX_2, \ldots$ is a sequence of CGF random variables and
  $\cX_n^* \Rightarrow \cX$, then $\phi_{\cX}(z)$ is entire.
\end{corollary}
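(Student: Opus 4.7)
The plan is to combine Theorem \ref{thm:normal} with L\'evy's continuity theorem (Theorem \ref{thm:levy}) and a classical analytic extension principle for characteristic functions. In fact, this corollary is essentially a clean restatement of what was already established in the proof of Theorem \ref{thm:frechet_shohat_converse}, so I would present it as a short distillation of that argument.

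First, I would apply Theorem \ref{thm:normal}, which asserts that $\cC_{\CGF}$ is a normal family of entire functions. Hence the sequence $\phi_{\cX_n^*}(z)$ has a subsequence, which for notational convenience I rename to the full sequence, converging uniformly on compact subsets of $\bC$ to some entire function $\phi(z)$. In particular, $\phi(t)$ is real-analytic on $\bR$.

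Next, by L\'evy's continuity theorem, the hypothesis $\cX_n^* \Rightarrow \cX$ gives pointwise convergence $\phi_{\cX_n^*}(t) \to \phi_{\cX}(t)$ for every $t \in \bR$. Combined with the previous paragraph, this forces $\phi_{\cX}(t) = \phi(t)$ for all real $t$. So the limiting characteristic function $\phi_{\cX}$, \emph{a priori} only defined on $\bR$, coincides there with the restriction of an entire function.

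The final step, which is the one genuine analytic input, is to upgrade this real-line agreement to an honest complex extension: by Lukacs \cite[Thm.~7.1.1]{MR0346874}, if a characteristic function agrees with an entire function in a neighborhood of the origin in $\bR$, then that characteristic function itself extends to the entire function on all of $\bC$. Applying this with $\phi$ yields $\phi_{\cX}(z) = \phi(z)$ for all $z \in \bC$, and in particular $\phi_{\cX}$ is entire. Since the argument starts from an arbitrary subsequential limit in the normal family, there is no ambiguity in the limiting entire function. I do not expect any real obstacle here — the heavy lifting is already done by Theorem \ref{thm:normal} (which supplies the uniform growth bound $|\phi_{\cX^*}(z)| \leq 2\exp(\tfrac32 R^2 e^{8R})$ on $|z|\leq R$), while the Lukacs extension theorem handles the passage from real to complex.
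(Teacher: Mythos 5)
Your proof is correct and follows essentially the same route as the paper: the corollary is a direct consequence of the argument in Theorem \ref{thm:frechet_shohat_converse}, which uses Theorem \ref{thm:normal} to extract a locally uniform subsequential limit (an entire function), L\'evy continuity to identify it with $\phi_{\cX}$ on $\bR$, and Lukacs' extension theorem to upgrade real-line agreement to a full complex extension. Nothing to add.
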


We may now prove \Cref{thm:FS_iff} from the introduction.

\begin{proof}[Proof of {\Cref{thm:FS_iff}}] Suppose $\cX_N^*
\Rightarrow \cX$ converges in distribution. Then the result follows by
\Cref{thm:frechet_shohat_converse}. Conversely, suppose $\lim_{N \to
\infty} \mu_d^{\cX_N^*}$ exists and is finite for all $d \in \bZ_{\geq
1}$. By \Cref{thm:normal}, we may pass to a subsequence on which
$\phi_{\cX_N^*}(z)$ converges uniformly on compact subsets of
$\bC$. The limiting entire function $\phi(z)$ then has power series
coefficients determined by the $\mu_{d}:=\lim_{N \to \infty} \mu_d^{\cX_N^*}$,
so the limit is independent of the subsequence we passed to and
$\lim_{N \to \infty} \phi_{\cX_N^*}(t) = \phi(t)$ for all $t \in
\bR$. Moreover, $\phi(t)$ is continuous at $0$, so there exists $\cX$
such that $\cX_N^* \Rightarrow \cX$ by \cite[Cor.~26.1]{MR1324786}.
\end{proof}

\subsection{Formal cumulants and cumulant generating  functions}

Recall formal cumulants from \Cref{ssec:formal.cum}.
The formal cumulants of cyclotomic polynomials have the following explicit  growth rate.

\begin{lemma}\label{lem:formal.cums.cyc.ineq}
For each fixed  $d\geq 1$, the formal cumulants of the cyclotomic polynomials for $n > 1$ 
satisfy 
\begin{equation}\label{eq:cyclotomic.cum.2}
\frac{(2d)!}{d} \left(\frac{n}{2\pi}\right)^{2d} \leq |\kappa_{2d}|
\leq \frac{|B_{2d}| n^{2d}}{2d}.
\end{equation}
 
\begin{proof}     
    The upper bound in \eqref{eq:cyclotomic.cum.2} is clear from the
factored form of the Jordan totient formula in
\eqref{eq:cyclotomic.cum.1}. For the lower bound, the Euler product
for the Riemann zeta function is \[ \prod_{\text{$p$ prime}}
(1-p^{-s}) = \frac{1}{\zeta(s)}. \] See \cite{enwiki:1185992088} for a
sketch of the proof.  Applying the expression for the zeta function at
positive even integers gives \begin{align*}
\frac{|\kappa_{2d}|}{n^{2d}}
        &= \frac{|B_{2d}|}{2d} \prod_{\substack{p\text{ prime} \\ p \mid n}} (1 - p^{-2d}) \\
        &\geq \frac{|B_{2d}|}{2d} \frac{1}{\zeta(2d)} \\
        &= \frac{|B_{2d}|}{2d} \frac{2(2d)!}{(2\pi)^{2d} |B_{2d}|} \\
        &= \frac{1}{d} \frac{(2d)!}{(2\pi)^{2d}}.
    \end{align*}
  \end{proof}
\end{lemma}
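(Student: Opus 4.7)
The plan is to leverage Lemma~\ref{lem:formal.cums.cyc.ps}, which already expresses $\kappa_{2d}(\Phi_n(q))$ in the convenient multiplicative form
\[
\kappa_{2d} = \frac{B_{2d}}{2d}\, J_{2d}(n) = \frac{B_{2d}}{2d}\, n^{2d} \prod_{\substack{p \text{ prime} \\ p \mid n}} \left(1 - \frac{1}{p^{2d}}\right).
\]
Taking absolute values, all of the difficulty is transferred to controlling the finite product $P_n := \prod_{p \mid n} (1 - p^{-2d})$, which lies in $(0, 1]$. The upper bound in \eqref{eq:cyclotomic.cum.2} is then the trivial inequality $P_n \leq 1$, so essentially no work is needed on that side.

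For the lower bound the key step is to extend the product from just the primes dividing $n$ to \emph{all} primes, which only decreases the product since each factor is in $(0,1)$. Thus
\[
P_n \geq \prod_{p \text{ prime}} \left(1 - \frac{1}{p^{2d}}\right) = \frac{1}{\zeta(2d)},
\]
where the final equality is the Euler product for the Riemann zeta function. Then I would invoke Euler's classical closed form
\[
\zeta(2d) = \frac{(2\pi)^{2d}\, |B_{2d}|}{2\,(2d)!},
\]
which immediately turns the $|B_{2d}|$ factor in $|\kappa_{2d}|/n^{2d} = \frac{|B_{2d}|}{2d} P_n$ into a reciprocal, producing the desired $\frac{(2d)!}{d}(2\pi)^{-2d}$ after simplification.

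Putting the two directions together, the proof has essentially three routine lines of algebra after one cites the Jordan totient expression from Lemma~\ref{lem:formal.cums.cyc.ps} and the Euler product/Euler evaluation of $\zeta(2d)$. No step is really an obstacle: the only subtlety is remembering to note that extending the product to all primes preserves the inequality because each factor $1 - p^{-2d}$ is strictly between $0$ and $1$ (which uses $2d \geq 2$), and that one needs $n > 1$ so that Lemma~\ref{lem:formal.cums.cyc.ps} applies in the first place.
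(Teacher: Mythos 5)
Your proposal matches the paper's proof essentially step for step: the upper bound follows from $\prod_{p\mid n}(1-p^{-2d}) \leq 1$, and the lower bound extends the product to all primes, applies the Euler product $\prod_p(1-p^{-2d}) = 1/\zeta(2d)$, and then substitutes Euler's closed form $\zeta(2d) = (2\pi)^{2d}|B_{2d}|/(2(2d)!)$. Your additional remark that the factor-by-factor decrease relies on $2d \geq 2$ (so each $1 - p^{-2d} \in (0,1)$) is a correct and slightly more careful statement of a point the paper leaves implicit.
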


While cyclotomic generating functions are typically given in rational
form, \Cref{lem:formal.cums.cyc.ps} allows their cumulants to be
described in terms of the cyclotomic form as follows.  One may for
instance use \Cref{ex:hooks} to describe the $q$-hook formula in
cyclotomic form combinatorially.

\begin{corollary}\label{cor:cums.in.cyclotmic.form} If $f(q) =
\prod_{j=1}^k \Phi_{n_j}(q)$, then for each $d \geq 1$ we have
\[
\kappa_{2d}(f(q)) = \sum_{j=1}^{k} \kappa_{2d}(\Phi_{n_j}(q)) =  \sum_{j=1}^{k}\frac{B_{2d}}{2d} n_{j}^{2d} \prod_{\substack{\text{$p$
prime} \\ p \mid n_{j}}} \left(1 - \frac{1}{p^{2d}}\right)
\]
and uniformly 
    \[ |\kappa_{2d}(f(q))| = \Theta\left(\sum_{j=1}^k n_j^{2d}\right). \]
\end{corollary}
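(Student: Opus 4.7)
The plan is to split the corollary into three pieces and handle each one using tools already established in the paper.

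First I would prove the additivity equality $\kappa_{2d}(f(q)) = \sum_j \kappa_{2d}(\Phi_{n_j}(q))$ from the general fact that formal cumulants are additive under multiplication. Concretely, if $f, g \in R[[q]]$ with $f(1), g(1)$ invertible, then from the definition in \Cref{ssec:formal.cum} we have
\[
  \log\bigl((fg)(e^t)/(fg)(1)\bigr) = \log(f(e^t)/f(1)) + \log(g(e^t)/g(1))
\]
in $R[[t]]$, so comparing coefficients gives $\kappa_d(fg) = \kappa_d(f) + \kappa_d(g)$ for every $d \geq 1$. A trivial induction on $k$ extends this to $k$-fold products, which gives the first equality. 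The second equality is then immediate from \Cref{lem:formal.cums.cyc.ps} applied term-by-term, using the factored form of $J_{2d}(n_j)$ supplied by \eqref{eq:cyclotomic.cum.3}.

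For the $\Theta$-bound, the upper estimate is just the triangle inequality combined with the upper bound in \eqref{eq:cyclotomic.cum.2}:
\[
  |\kappa_{2d}(f(q))| \;\leq\; \sum_{j=1}^k |\kappa_{2d}(\Phi_{n_j}(q))| \;\leq\; \frac{|B_{2d}|}{2d}\sum_{j=1}^k n_j^{2d}.
\]
The matching lower bound is the step where a little care is required: one must observe that there is \emph{no cancellation} in the sum $\sum_j \kappa_{2d}(\Phi_{n_j}(q))$. This follows from \Cref{cor:kappa_alternates}, which guarantees that for every $j$ the sign of $\kappa_{2d}(\Phi_{n_j}(q))$ is $(-1)^{d-1}$ (the cyclotomic polynomial $\Phi_{n_j}$ lies in $\bR_{\geq 0}[q]$ and has all roots on the unit circle). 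Hence all summands share a common sign and
\[
  |\kappa_{2d}(f(q))| = \sum_{j=1}^k |\kappa_{2d}(\Phi_{n_j}(q))| \;\geq\; \frac{(2d)!}{d\,(2\pi)^{2d}}\sum_{j=1}^k n_j^{2d}
\]
by the lower bound in \eqref{eq:cyclotomic.cum.2}. Combining the two inequalities yields $|\kappa_{2d}(f(q))| = \Theta\bigl(\sum_j n_j^{2d}\bigr)$, with constants depending only on $d$ and hence uniform over the choice of the multiset $\{n_1,\ldots,n_k\}$.

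The only conceptual step beyond bookkeeping is recognizing that the sign-alternation statement of \Cref{cor:kappa_alternates} must be invoked to prevent the individual contributions from canceling; without it one would only get the upper bound. Everything else is direct substitution into previously established formulas.
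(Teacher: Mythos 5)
Your overall decomposition (additivity of cumulants, term-by-term application of \Cref{lem:formal.cums.cyc.ps}, then sign-consistency plus \Cref{lem:formal.cums.cyc.ineq} for the two-sided bound) matches what the paper's statements are set up to provide, and the first two steps are fine. However, your justification of the ``no cancellation'' step is wrong. You invoke \Cref{cor:kappa_alternates}, whose hypothesis is $f \in \bR_{\geq 0}[q]$, and you assert parenthetically that each $\Phi_{n_j}$ ``lies in $\bR_{\geq 0}[q]$.'' This is false: most cyclotomic polynomials have negative coefficients (e.g.\ $\Phi_6(q) = q^2 - q + 1$), and indeed \Cref{lem:monoid.properties}(v) records that $\Phi_n(q) \in \Phi^+$ precisely when $n$ is a prime power. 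So \Cref{cor:kappa_alternates} does not apply to the individual factors $\Phi_{n_j}$, and as written your lower bound is unjustified.

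The conclusion you want is nonetheless true, and the fix is short: sign-consistency of the summands follows directly from the explicit formula in \Cref{lem:formal.cums.cyc.ps}. For each $n_j > 1$ one has
\[
\kappa_{2d}\bigl(\Phi_{n_j}(q)\bigr) = \frac{B_{2d}}{2d}\, J_{2d}(n_j),
\]
and $J_{2d}(n_j) = n_j^{2d} \prod_{p \mid n_j} (1 - p^{-2d}) > 0$, so every summand carries the common sign of $B_{2d}$, namely $(-1)^{d-1}$. (Equivalently, the strictly positive lower bound in \eqref{eq:cyclotomic.cum.2} already forces each $\kappa_{2d}(\Phi_{n_j})$ to be nonzero with the sign dictated by $B_{2d}$.) With that replacement, the rest of your argument --- triangle inequality for the upper bound, no cancellation plus the lower bound of \eqref{eq:cyclotomic.cum.2} for the matching lower bound, with constants depending only on $d$ --- goes through as you describe.
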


The asymptotic behavior of sequences of CGF random variables can be
determined by their standardized cumulant generating function.  For
instance, $\cX_1,\cX_2,\ldots$ is asymptotically normal if and only if
the corresponding sequence of standardized cumulants
$(\kappa_d^{(N)})^* \to 0$ for all $d\geq 3$ as $N \to \infty,$ in
which case by \eqref{eq:second.char.function}, $\log \phi_{\cX_N^*}(t)
\to -t^2/2$ for all $t \in \bR$ as $N\to \infty$. Standardized second
characteristic functions of CGFs are particularly well-behaved in
the following sense.

\begin{proposition}
  Let $\cX$ be a random variable corresponding to a cyclotomic generating function. Then
  the standardized cumulant generating function $\log
  \phi_{\cX^*}(z)$ is analytic in the complex disk $|z| <
  \sqrt{2}$. Furthermore, for real values $-\sqrt{2} < t < \sqrt{2}$ we
  have 
  \begin{equation}\label{eq:log_phi}
    \log \phi_{\cX^*}(t) \leq -\frac{t^2}{2} = \log \phi_{\cN(0,1)}(t).
  \end{equation}
  
  \begin{proof} By \Cref{thm:normal}, $\phi_{\cX}(z)$ is entire, so
  the singularities of $\log \phi_{\cX}(z)$ come about from the zeros of
$\phi_{\cX}(z)$.  To determine the zeros, it suffices to assume $\cX$
is determined by $f(q) = \frac{\prod_{k=1}^m [a_k]_q}{\prod_{k=1}^m
[b_k]_q} \in \Phi^{+}$ since the results pertain to the standardized
random variable corresponding to $\cX$.  Furthermore, since
$\phi_{\cX}(z)=1$ if $f(q)=1$, which satisfies both claims, we may
assume that $m = \max\{a_k\}>1$ and $m$ does not appear in the
denominator multiset.  Hence, $\Phi_{m}(q)$ is a factor of $f(q)$.
Furthermore, from the product formula \eqref{eq:cgf_cf}, we observe
that the closest zero of $\phi_{\cX}(z)=f(e^{iz})$ to the origin
occurs at $z=2\pi/ m$. Hence $\phi_{\cX}(z)$ is analytic on the
simply connected domain $|z| < 2\pi/m$, so $\log \phi_{\cX}(z)$
is analytic on the disk $|z| < 2\pi/m$.

By \eqref{eq:std.char.poly}, $\phi_{\cX^*}(z) = e^{-i\mu z/\sigma}\phi_{\cX}(z/\sigma)$, where $\mu = \kappa_1(f)$ and $\sigma^{2}= \kappa_2(f)$ are
the mean and variance of $\cX$.  Therefore, to prove $\log
\phi_{\cX^*}(z)$ is analytic in the complex disk $|z| < \sqrt{2}$, it
suffices to show that $|z| < \sqrt{2}$ implies
$\left|\frac{z}{\sigma}\right| < 2\pi/m.$

Since $\Phi_{m}(q)$ is a factor of $f(q)$, we have $0<
\kappa_2(\Phi_m) \leq \sigma^2$ by \Cref{lem:formal.cums.cyc.ineq} and
\Cref{cor:cums.in.cyclotmic.form}.  By the lower bound in
\eqref{eq:cyclotomic.cum.2}, $\sqrt{ \kappa_2(\Phi_m)} \geq
\frac{m}{\pi \sqrt{2}}$.  Hence if $|z| < \sqrt{2}$,
\[ \left|\frac{z}{\sigma}\right| < \frac{\sqrt{2}}{m/\pi\sqrt{2}} =
\frac{2\pi}{m} 
\]
as required.  

To prove the ``furthermore'' statement,  we use the expansion
    \begin{equation}\label{eq:log_phi_star}
      \log\phi_{\cX^*}(z) = -\sum_{d \geq 1} \frac{(-1)^{d-1} \kappa_{2d}}{(2d)!}
         \left(\frac{z}{\sigma}\right)^{2d}
    \end{equation}
to compute $\log\phi_{\cX^*}(t)$ for real $-\sqrt{2} < t < \sqrt{2}$.
The inequality
\eqref{eq:log_phi} comes from truncating this power series expansion
after the first term and applying \Cref{cor:kappa_alternates} to see
that all terms on the right side are nonpositive.
\end{proof}
\end{proposition}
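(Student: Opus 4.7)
The plan is to locate the nearest singularity of $\log \phi_{\cX^*}(z)$ about the origin, which coincides with the nearest zero of $\phi_{\cX^*}(z)$, and separately to derive the real-valued inequality via the sign pattern of the cumulants. By \Cref{thm:normal} the function $\phi_{\cX^*}(z)$ is entire, and from \eqref{eq:std.char.poly} we have $\phi_{\cX^*}(z) = e^{-i\mu z/\sigma}\phi_{\cX}(z/\sigma)$. Since the exponential prefactor never vanishes, it suffices to study the zeros of $\phi_{\cX}(z)$.

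First I would reduce to the case of a basic CGF $f(q) = \prod_{k=1}^m [a_k]_q / \prod_{k=1}^m [b_k]_q$ with the two multisets disjoint, using the uniqueness clause in \Cref{thmdef:cgf_equivalences}. This reduction is harmless because standardization absorbs any $\alpha q^\beta$ prefactor. If $f \equiv 1$ then $\phi_{\cX^*} \equiv 1$ and both claims are immediate, so assume $M \coloneqq \max\{a_k\} > 1$, which by disjointness does not appear in the denominator. Then $\Phi_M$ divides $f$, and the factored form \eqref{eq:cgf_cf} shows that the zero of $\phi_{\cX}(z)$ nearest the origin lies at $z = 2\pi/M$. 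Consequently $\phi_{\cX^*}(z)$ has no zeros in $|z| < 2\pi\sigma/M$, and it remains to verify $2\pi\sigma/M \geq \sqrt{2}$, i.e., $\sigma \geq M/(\pi\sqrt{2})$. The key input is that $\sigma^2 = \kappa_2(f) \geq \kappa_2(\Phi_M)$, which follows because the contributions to $\kappa_2$ from the complementary cyclotomic factors of $f$ are each nonnegative by \Cref{cor:kappa_alternates}. Combining this with the lower bound $\kappa_2(\Phi_M) \geq M^2/(2\pi^2)$ from \Cref{lem:formal.cums.cyc.ineq} at $d = 1$ then produces exactly the inequality needed.

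For the pointwise inequality on $(-\sqrt{2}, \sqrt{2})$, I would expand $\log \phi_{\cX^*}(t)$ in its convergent power series about $0$:
\[
 \log \phi_{\cX^*}(t) = \sum_{d \geq 1} \kappa_{2d}^* \frac{(it)^{2d}}{(2d)!} = \sum_{d \geq 1} (-1)^d \kappa_{2d}^* \frac{t^{2d}}{(2d)!},
\]
using that $\kappa_1^* = 0$ and $\kappa_{2d+1}^* = 0$ for all $d \geq 1$ by \Cref{cor:kappa_alternates}. The $d = 1$ term equals $-t^2/2$ since $\kappa_2^* = 1$, and every subsequent term is nonpositive because \Cref{cor:kappa_alternates} also gives $(-1)^{d-1} \kappa_{2d}^* \geq 0$, i.e., $(-1)^d \kappa_{2d}^* \leq 0$, for all $d \geq 1$.

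The main obstacle is coupling the standard deviation $\sigma$ to the extremal numerator index $M$ tightly enough that the nearest singularity of $\log \phi_{\cX^*}$ sits at distance at least $\sqrt{2}$ in the standardized picture. The disjointness reduction is what makes this work: it guarantees $\Phi_M$ is a genuine factor of $f$ (rather than being cancelled by something in the denominator), so the sharp lower bound on $\kappa_2(\Phi_M)$ coming from the Jordan totient formula in \Cref{lem:formal.cums.cyc.ineq} can be deployed to dominate $M^2$ on the nose, yielding the clean constant $\sqrt{2}$ in the statement.
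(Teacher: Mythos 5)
Your proof follows the paper's argument essentially step for step: reduce to a basic CGF with disjoint multisets, isolate the cyclotomic factor $\Phi_M$ for $M=\max a_k$, locate the nearest zero of $\phi_{\cX}$ at $2\pi/M$ via \eqref{eq:cgf_cf}, and then bound $\sigma$ from below using $\kappa_2(\Phi_M)$ via \Cref{lem:formal.cums.cyc.ineq}; the pointwise bound then comes from truncating the cumulant expansion and invoking the sign pattern of \Cref{cor:kappa_alternates}. There is one citation slip worth correcting. When you assert that $\kappa_2(f) \geq \kappa_2(\Phi_M)$ because ``the contributions to $\kappa_2$ from the complementary cyclotomic factors of $f$ are each nonnegative by \Cref{cor:kappa_alternates},'' you are applying that corollary to the individual factors $\Phi_{n_j}$ (or their product $f/\Phi_M$), but \Cref{cor:kappa_alternates} requires the polynomial to lie in $\bR_{\geq 0}[q]$, and cyclotomic polynomials with non-prime-power index can have negative coefficients (e.g.\ $\Phi_6 = q^2 - q + 1$), as can the complementary product. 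The conclusion you want is nonetheless true, but it should be cited as coming from the explicit additive formula in \Cref{cor:cums.in.cyclotmic.form} (equivalently \Cref{lem:formal.cums.cyc.ps}), which shows $\kappa_2(\Phi_n) = \tfrac{1}{12}J_2(n) > 0$ for every $n>1$, regardless of coefficient signs, and hence that dropping cyclotomic factors can only decrease $\kappa_2$. That is exactly the reference the paper uses at this point. With that fix, your argument is the paper's argument.
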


\section{CGF monoids and related open problems}\label{sec:examples}

As mentioned in Definition~\ref{def:basic}, the set of basic cyclotomic
generating functions forms a monoid under multiplication, $\Phi^+$.
We also consider the larger monoid generated by all cyclotomic
polynomials.  Since many families of polynomials of interest are also
either unimodal or log-concave with no internal zeros, we consider
these submonoids as well, together with another variant using the Gale
order on multisets.  We conclude with a monoid associated to Hilbert
series of polynomial rings quotiented by regular sequences and several
open problems.

\subsection{Basic, unimodal, and log-concave CGF monoids}\label{sec:monoid} %

\begin{definition}
  The \textbf{cyclotomic monoid} is the monoid $\Phi^{\pm}$ generated
by the cyclotomic polynomials under multiplication, graded by
polynomial degree.  The polynomials in $\Phi^{\pm}$ can have both
positive and negative integer coefficients.  Recall that the basic CGF
monoid is the submonoid $\Phi^+$ of $\Phi^\pm$ consisting of all
basic cyclotomic generating functions, which is clearly closed under
multiplication.

Similarly, let $\Phi^{\uni}$ and $\Phi^{\lcc}$ denote the submonoids
of $\Phi^+$ given by cyclotomic generating functions which are
unimodal or log-concave with no internal zeros, respectively.  These
properties are preserved under multiplication
\cite[Prop.~1-2]{MR1110850}.
\end{definition}

\begin{lemma}\label{lem:monoid.properties}
We have $\Phi^{\lcc} \subset \Phi^{\uni} \subset \Phi^+ \subset
\Phi^{\pm}$, and for each monoid $\cM$ in $ \{\Phi^{\lcc}, \Phi^{\uni},$
 $\Phi^+, \Phi^{\pm}\}$ the following facts hold.

  \begin{enumerate}[(i)]
    \item Each $\cM$ has a unique minimal set of generators under inclusion,
      namely the set of irreducible elements $x \in \cM$
      where $x=yz$ for $y, z \in \cM$ implies $y=1$ or
      $z = 1$.
    \item There are only a finite number of polynomials in $\cM$ of
    any given degree $n$.  
    \item Each $\cM$ cannot be generated by a finite subset.
    \item Any element $1 \neq f \in \Phi^+$ has a cyclotomic
      factor of the form $\Phi_{p^k}(q)$ for some prime $p$.
    \item A cyclotomic polynomial $\Phi_n(q)$ is in
      $\Phi^+$ if and only if $n$ is a prime power.
    \item Any  $f \in \Phi^+$ with odd degree has $\Phi_{2}(q)$ as a factor.
  \end{enumerate}

  \begin{proof} The list of inclusions follows directly from the
definitions and the fact that log-concave polynomials with no internal
zeros are always unimodal.  Property (i) follows easily from classical
factorization in $\bC[q]$ and the fact that every polynomial in
$\Phi^\pm$ is monic.  For (ii), the $\cM = \Phi^\pm$ case follows from
the fact that Euler's totient function has finite fibers, and the rest
are submonoids.  For (iii), note that $[n]_q \in \cM$ for each such
$\cM$, and $[n]_q$ has primitive $n$th roots of unity as roots, so as
$n \to \infty$ a finite set of generators cannot yield all $[n]_q$.
    
    For (iv), first note that $f(1) \geq 1 \neq 0$ for $f \in \Phi^{+}$, so
    $\Phi_1(q) = q-1$ is not a factor of $f$.
    Further recall that $\Phi_n(0) = \Phi_n(1) = 1$ for $n \geq 2$ not a prime power.
    Hence, if $f$ had no factors of the form $\Phi_{p^k}(q)$, we would have
    $f(0) = f(1) = 1$, forcing $f(q) = 1$ since the coefficients of $f$ are nonnegative.
    The forwards implication in (v) is given by (iv), and the backwards implication follows
    from the fact that
      \[ \Phi_{p^k}(q) = 1 + q^{p^k} + q^{2p^k} + \cdots +
      q^{(p-1)p^k} \in \Phi^+. \]
 Finally, for (vi), $\varphi(n)$ is even for all $n \geq 3$, so
    any product of cyclotomic polynomials of odd degree must contain $\Phi_2$.
  \end{proof}
\end{lemma}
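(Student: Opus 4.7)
The inclusions $\Phi^{\lcc} \subset \Phi^{\uni} \subset \Phi^{+} \subset \Phi^{\pm}$ are immediate from definitions, using the classical fact that a log-concave sequence with no internal zeros is unimodal, and that all four classes consist of monic integer polynomials (since cyclotomic polynomials are monic).

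My plan for (i)--(iii) is to exploit the $\bN$-grading by polynomial degree together with unique factorization in $\bC[q]$. For (i), the length of a factorization inside any $\cM$ is bounded above by the polynomial degree, so induction on degree writes every element as a finite product of irreducibles; uniqueness of the minimal generating set then follows because any irreducible element of $\cM$ must appear in any generating set of $\cM$. For (ii), the bound $\sqrt{n/2} \leq \varphi(n)$ recalled in \Cref{sec:background} implies only finitely many cyclotomic polynomials have degree at most $n$, so the number of products of cyclotomics of total degree exactly $n$ is finite, bounding $\Phi^{\pm}$ and hence each submonoid. For (iii), the key witnesses are $\{[p]_q = \Phi_p(q) : p \text{ prime}\}$; each lies in $\Phi^{\lcc}$ since its coefficient sequence is $(1,1,\ldots,1)$, each is irreducible in $\bC[q]$ and hence in every monoid under consideration, and any generating set must contain each $\Phi_p$ individually, so infinitely many generators are required.

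For (iv), suppose $1 \neq f \in \Phi^{+}$ and factor $f = \prod_j \Phi_{n_j}(q)$ with $n_j \geq 1$. First, $\Phi_1(q) = q-1$ cannot be a factor: otherwise $f(1)=0$, which combined with nonnegative coefficients forces $f=0$, contradicting $f \in \Phi^{+}$. Now assume no $n_j$ is a prime power, and apply the standard evaluations $\Phi_n(0) = 1$ for $n \geq 2$ and $\Phi_n(1) = 1$ for $n \geq 2$ not a prime power (both collected in \Cref{sec:background}) to get $f(0) = f(1) = 1$; nonnegative integer coefficients summing to $1$ with constant term $1$ force $f = 1$, a contradiction. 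Part (v) then drops out: the forward direction is (iv) applied to $f = \Phi_n(q)$, and the converse uses the explicit formula $\Phi_{p^k}(q) = \sum_{j=0}^{p-1} q^{j p^{k-1}}$, which is manifestly in $\Phi^{+}$. For (vi), $\varphi(n)$ is even whenever $n \geq 3$, so the only cyclotomic factors of odd degree are $\Phi_1$ and $\Phi_2$; since $\Phi_1 \nmid f$ in $\Phi^{+}$ by the argument just used in (iv), the parity of $\deg f$ must be carried by $\Phi_2$.

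No single step looks like a serious obstacle. The closest thing to a subtlety is (iii), where one must exhibit an infinite family of irreducibles that persists in the narrowest monoid $\Phi^{\lcc}$; the $[p]_q$ handle this uniformly. The remaining delicacy is in (iv), where $\Phi_1$ must be ruled out as a factor \emph{before} invoking the $\Phi_n(0), \Phi_n(1)$ evaluations, since otherwise those evaluations do not apply.
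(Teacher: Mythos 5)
Your proof is essentially correct and parallels the paper's on every part except (iii), where you use a genuinely different witness family. The paper takes $[n]_q \in \Phi^{\lcc}$ for all $n$ and argues that since $[n]_q$ has primitive $n$th roots of unity as roots, any finite set of generators covers only finitely many primitive roots and thus cannot produce all $[n]_q$; you instead use $\Phi_p(q) = [p]_q$ for primes $p$ and argue that these are irreducible elements of the monoid. Both arguments are clean and elementary, but yours has a small slip: $\Phi_p(q)$ is certainly \emph{not} irreducible in $\bC[q]$, since every nonconstant polynomial over $\bC$ splits into linear factors; you mean that $\Phi_p(q)$ is irreducible in $\bZ[q]$ (equivalently $\bQ[q]$), and since every element of $\Phi^{\pm}$ is a product of cyclotomic polynomials in $\bZ[q]$, this forces $\Phi_p$ to be irreducible as a monoid element, hence irreducible in every submonoid containing it. With that correction, your argument is fine, and arguably crisper than the paper's for (iii) because it identifies an infinite set of explicit irreducibles surviving all the way down to $\Phi^{\lcc}$. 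For (i), (ii), (iv), (v), (vi) you match the paper's reasoning, with slightly more detail in (i) and (ii); your care in (iv) to eliminate $\Phi_1$ before evaluating at $q=1$ is exactly the paper's step as well.
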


\begin{example}
  We have
  \begin{align*}
  \Phi_5(q) \Phi_6(q)
    &= (q^4+q^3+q^2+q+1)(q^2-q+1) \\
    &= q^6 + 0q^5 + q^4 + q^3 + q^2 + 0q^1 + 1 \in \Phi^+ -
\Phi^{\uni}.  \end{align*} Consequently, $\Phi_5(q)\Phi_6(q)$ is
irreducible in $\Phi^+$ and hence belongs to its minimal set of
generators. The smallest degree polynomial in $\Phi^{\uni} -
\Phi^{\lcc}$ is
\[
  \Phi_3(q) \Phi_4(q) = q^{4} + q^{3} + 2q^{2} + q^{1} + 1.   
\]

\end{example}

Intuitively, multiplying by $\Phi_p(q) = 1 + q + \cdots + q^{p-1}$ for
$p$ prime tends to smooth out chaotic coefficients. Computationally,
it appears to be necessary to include such a factor in unimodal or
log-concave CGFs. More precisely, we conjecture the following
analogue of \Cref{lem:monoid.properties}(iv), which has been checked up to degree $50$.

\begin{conjecture}
  Any element $1 \neq f \in \Phi^{\uni}$ has a cyclotomic factor of the form $\Phi_p(q)$ for some prime $p$.
\end{conjecture}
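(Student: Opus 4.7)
The plan is to suppose for contradiction that $f \in \Phi^{\uni}$ is nontrivial and admits no cyclotomic factor of the form $\Phi_p(q)$ for any prime $p$, and then to derive structural constraints on the cyclotomic factorization that cannot all be satisfied. This mirrors the strategy in \Cref{lem:monoid.properties}(iv), but requires more than $f(0)$ and $f(1)$ alone.

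First, any palindromic unimodal polynomial $f$ with $c_0 = c_d = 1$ must have $c_k \geq 1$ for every $0 \leq k \leq d$, since weak unimodality forces $\min_k c_k = \min(c_0, c_d)$; in particular $c_1 \geq 1$. Using the M\"obius form \eqref{eq:mobius.inversion} and taking logarithms, the coefficient of $q$ in $\Phi_n(q)$ equals $-\mu(n)$ for every $n > 1$, so writing $f = \prod_i \Phi_{n_i}^{e_i}$ gives
\[ c_1 = -\sum_i e_i\, \mu(n_i). \]
Under our hypothesis, every $n_i$ is either a prime power $p^k$ with $k \geq 2$ (so $\mu(n_i) = 0$) or a composite non-prime-power (so $\mu(n_i) \in \{-1, 0, 1\}$). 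The inequality $c_1 \geq 1$ therefore forces at least one $n_i$ with $\mu(n_i) = -1$, i.e.\ $n_i$ squarefree with an odd number of distinct prime factors. Since primes are excluded, this count must be at least $3$, and the smallest admissible value is $n_i = 30$. At the same time, \Cref{lem:monoid.properties}(iv) still guarantees some $\Phi_{p^k}(q)$ with $k \geq 2$ divides $f$, and since $\Phi_{p^k}(q) \in \bZ[q^{p^{k-1}}]$ this factor injects strong periodicity in the exponents of its monomials.

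Second, I would derive a contradiction by exploiting the tension between these two forced features: the sign-alternating behavior of $\Phi_n(q)$ for $n$ squarefree with many distinct prime factors (for instance $\Phi_{30}(q) = q^{8} + q^{7} - q^{5} - q^{4} - q^{3} + q + 1$), and the coarse periodicity from the $\Phi_{p^k}$-factor. The aim is to show that cancellation of all negative coefficients, using only factors drawn from the restricted pool (composite $\Phi_n$ and $\Phi_{p^k}$ with $k \geq 2$), must leave at least one interior coefficient of $f$ equal to zero, contradicting $c_k \geq 1$.

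The main obstacle is this last step: the constraint $c_1 \geq 1$ is necessary but far from sufficient, and a direct combinatorial analysis is complicated by the interference between periodicities from different $\Phi_{p^k}$-factors and the sign patterns of high-composite $\Phi_n$-factors. One promising route is to iterate the coefficient argument by expanding $\log \Phi_n(q)$ to higher order, extracting further linear constraints from $c_2 \geq 1$, $c_3 \geq 1$, $\ldots$, and continuing until the allowed factorization collapses. An alternative is to induct on the number of cyclotomic factors after removing a minimal unimodal subfactor, or to use an analytic bound on $|f(\zeta)|$ at primitive roots of unity $\zeta$ of small order, leveraging the fact that $f(\zeta_p) \neq 0$ since $\Phi_p \nmid f$. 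The verification through degree $50$ suggests that any counterexample would have both large degree and a highly constrained cyclotomic support, consistent with the conjecture being true but technically delicate to establish rigorously.
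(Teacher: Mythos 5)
The statement you are trying to prove is stated in the paper only as a \emph{conjecture}, verified computationally up to degree $50$; no proof appears in the paper, so there is no reference argument to compare against. Your proposal is honest on this point, and your preliminary observations are correct and worth recording: since elements of $\Phi^{\uni}$ are monic, palindromic, and unimodal with $c_0 = c_d = 1$, every coefficient satisfies $c_k \geq 1$; since $\Phi_n(q) = 1 - \mu(n) q + O(q^2)$ for $n \geq 2$, the coefficient of $q$ in $f = \prod_i \Phi_{n_i}^{e_i}$ is $-\sum_i e_i\, \mu(n_i)$, and the constraint $c_1 \geq 1$ forces some $n_i$ to be squarefree with an odd number of distinct prime factors. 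Under the contradiction hypothesis that no $n_i$ is prime, that count is at least $3$, so $n_i \geq 30$. Combined with \Cref{lem:monoid.properties}(iv), which under the same hypothesis forces a $\Phi_{p^k}$-factor with $k \geq 2$, you have two structural constraints on the cyclotomic support.

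The genuine gap is the final step, which you yourself flag: the claimed ``tension'' between the periodicity of a $\Phi_{p^k}$-factor ($k \geq 2$) and the alternating sign pattern of a high-composite squarefree $\Phi_n$-factor is not turned into an actual contradiction. The linear constraint from $c_1$ is only one of many, and as you note the interference between multiple $\Phi_{p^k}$-factors and several squarefree $\Phi_n$-factors makes a direct coefficient-by-coefficient argument hard to control. The suggested continuations (higher-order expansion of $\log \Phi_n(q)$ to extract constraints from $c_2, c_3, \ldots$; induction after splitting off a minimal unimodal subfactor; bounding $|f(\zeta_p)|$ at small primitive roots of unity) are all reasonable directions, but none is carried out, and no lemma is proved that would close the loop. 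In short: what you have is a plausible strategy plus two correct necessary conditions, not a proof — which is consistent with the fact that the authors themselves leave the statement open. If you wish to present something rigorous, you could package the $c_1 \geq 1$ argument as a standalone lemma: \emph{any $1 \neq f \in \Phi^{\uni}$ has a cyclotomic factor $\Phi_m$ with $\mu(m) = -1$}, which is new, correct, and a genuine (if small) structural constraint toward the conjecture.
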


In 1988, Boyd--Montgomery \cite{bm88} described some of the properties
of the cyclotomic monoid $\Phi_n^{\pm}$.  In particular, they showed
that $\log|\Phi_n^{\pm}| \sim \sqrt{105 \zeta(3) n}/\pi$.  This result
is attributed to Vaclav Kotesovec in \cite[A120963]{oeis}, but no
citation is given.  One could ask for similar results for the other
cyclotomic monoids.

\begin{problem}\label{prob:growth_rate}
For a monoid of polynomials $\cM$, let $\cM_n$ be the set of degree
$n$ polynomials in $\cM$.  Identify the growth rate of $|\cM_n|$
as $n \to \infty$ for the other cyclotomic monoids $\cM \in
\{\Phi^{\lcc}, \Phi^{\uni}\}$.  
\end{problem}

\begin{problem}\label{prob:mins}
  Classify the minimal generating set $\cM \in
\{\Phi^{\lcc}, \Phi^{\uni}, \Phi^+\}$. Give an efficient algorithm for
identifying the generators up to any desired degree. Find the
asymptotic growth rate of the generating set as a function of degree.
\end{problem}

We use the cyclotomic form for basic CGFs and the fact that we know a
lower bound for the degree of the cyclotomic polynomial $\Phi_{n}(q)$
to find all basic CGFs of a given degree.  Some initial data and OEIS
identifiers \cite{oeis} can be found in \Cref{s:appendix}.  The
sequence corresponding to $\Phi_n^{\pm}$ \cite[A120963]{oeis} was
created in 2006.  We added the corresponding pages for the other
cyclotomic monoids, so we believe the other cyclotomic monoids have
not been studied in the literature.

\subsection{Gale order and the associated CGF monoid}\label{sub:gale}

Given two multisets of the same size, say $A=\{a_{1}\leq a_{2}\leq
\cdots \leq a_{m} \}$ and $B=\{b_{1}\leq b_{2}\leq \cdots \leq b_{m}
\}$ both sorted into increasing order, we say $A\preceq B$ in
\textbf{Gale order} provided $a_{k}\leq b_{k}$ for all $1\leq k\leq
m$.  This partial order is known by many other names; we are following
\cite{Ardila-Rincon-Williams} for consistency. Gale studied this
partial order in the context of matroids on $m$-subsets of
$\{1,2,\dots,n \}$ in the 1960's \cite{Gale.1968}.

\begin{definition}\label{def:gale.monoid}
Let $\Phi^{\gale}$ denote the \textbf{Gale} submonoid of $\Phi^+$
given by cyclotomic generating functions $f(q)=\frac{\prod_{k=1}^m
[a_k]_q}{\prod_{k=1}^m [b_k]_q} \in \Phi^{+}$ such that $\{b_{1},
b_{2},\ldots ,b_{m}\} \preceq \{a_{1}, a_{2}, \ldots , a_{m} \}$ in
Gale order.  Note that Gale order holds independent of the representation
of the rational expression chosen since $\{b_{1}, b_{2},\ldots
,b_{m}\} \preceq \{a_{1}, a_{2}, \ldots , a_{m} \}$ if and only if
$\{i, b_{1}, b_{2},\ldots ,b_{m}\} \preceq \{i, a_{1}, a_{2}, \ldots ,
a_{m} \}$ for any positive integer $i$.  Furthermore, the Gale
property is again preserved under multiplication, hence $\Phi^{\gale}$
is closed under multiplication.
\end{definition}

The properties in \Cref{lem:monoid.properties} also hold for the Gale
monoid.  In particular, it has a finite number of elements of each
degree and a unique minimal set of generators which can be explored
computationally.  Data is given in \Cref{s:appendix} for the number of
elements in $\Phi^{\gale}$ of degree $n$ up to $n=18$ along with the
pointer to the corresponding OEIS entry.  The number of generators of
each degree are also noted in \Cref{s:appendix}.

As noted in \Cref{rem:cgf_not_pointwise}, not all basic
CGFs are in the Gale monoid.  They agree up to degree 10, there are
two basic CGFs which don't satisfy the Gale property in degree 11,
namely

\begin{align*}
 q^{11} + & 4q^{10} + 8q^{9} + 9q^{8} + 5q^{7} + 5q^{4} + 9q^{3} + 8q^{2} + 4q^{1} + 1
 = \frac{[12]_q[3]_q^{3}[2]_q^{2}}{[6]_q[4]_q[1]_q^{6}} = \Phi_{12} \Phi_3^{3}  \Phi_2 \\
q^{11} + & 4q^{10} +  5q^{9} + q^{8} + 5q^{6} + 5q^{5} + q^{3} + 5q^{2} + 4q^{1} + 1
 = \frac{[12]_q[2]_q^{5}}{[4]_q[3]_q[1]_q^{4}} = \Phi_{12} \Phi_6 \Phi_2^{5} .
\end{align*}
There are 4 non-Gale basic CGFs in degree 12, and so on.

The Gale monoid and unimodal CGF monoids are not comparable.  The Gale
monoid includes $[4]_{q}/[2]_{q}=\Phi_{4}=1+q^{2}$, which is not
unimodal.  The smallest degree unimodal CGFs which don't satisfy the
Gale property have degree 20,

\begin{align*}
f(q) &= \frac{[12]_q[8]_q[3]_q[3]_q[3]_q[3]_q[3]_q[3]_q[2]_q}{[6]_q[4]_q[4]_q[1]_q[1]_q[1]_q[1]_q[1]_q[1]_q} = \Phi_{12} \Phi_8 \Phi_3^{6} \\
g(q) &= \frac{[12]_q[8]_q[3]_q[3]_q[3]_q[3]_q[3]_q[2]_q[2]_q[2]_q}{[6]_q[4]_q[4]_q[1]_q[1]_q[1]_q[1]_q[1]_q[1]_q[1]_q} = \Phi_{12} \Phi_8 \Phi_3^{5} \Phi_2^{2}.
\end{align*}

The Gale monoid also does not contain the log-concave monoid.  The
unique smallest degree log-concave CGF which does not satisfy the Gale
property has degree 25,

\[
\frac{[12]_q[2]_q^{19}}{[4]_q[3]_q} =  \Phi_{12} \Phi_6 \Phi_2^{19}.
\]

\subsection{CGF monoid from regular sequences}\label{sub:hsop}

In this subsection, we discuss one more submonoid of the
basic CGF monoid coming from certain Hilbert series of quotients of
polynomial rings that naturally arise in commutative algebra.  As
described below, this monoid is also a submonoid of the Gale monoid.

Fix a pair of multisets of positive integers of the same size, say
$\{a_1, \ldots, a_m\}$ and $\{b_1, \ldots, b_m\}$.  Let $\bk[x_1,
\ldots, x_m]$ be a (free) polynomial ring over a field $\bk$ with
grading determined by $\deg(x_j) = b_j$ for $1\leq j\leq m$.  Let
$\theta_1, \ldots, \theta_m$ be a sequence of non-constant homogeneous
polynomials in $\bk[x_1, \ldots, x_m]$ with $\deg(\theta_j) = a_j $.
Then, $\theta_1, \ldots, \theta_m$ is a \textbf{regular sequence} if
$\theta_i$ is not a zero-divisor in $\bk[x_1, \ldots, x_m]/(\theta_1,
\ldots, \theta_{i-1})$ for all $1 \leq i \leq m$.  Furthermore,
$\theta_1, \ldots, \theta_m \in \bk[x_1, \ldots, x_n]$ form a
\textbf{homogeneous system of parameters} (HSOP) if $\bk[x_1, \ldots, x_m]$ is
a finitely generated $\bk[\theta_1, \ldots, \theta_m]$-module.  Consider
the corresponding quotient rings,
\[
R \coloneqq \bk[x_1, \ldots, x_m]/(\theta_1, \ldots, \theta_m).
\]
Such rings play an important role in commutative algebra and the study
of affine and projective varieties \cite{Smith.etal}.  The following
equivalences are well-known and are stated explicitly for
completeness.

\begin{lemma}\label{lem:hsop.eqivs}
  Let $\bk[x_1, \ldots, x_m]$ be a polynomial ring over a field $\bk$ with $\deg(x_j) = b_j \in \bZ_{\geq 1}$. Suppose $\theta_1, \ldots, \theta_m$ are homogeneous elements. Then the following are equivalent:
  \begin{enumerate}[(i)]
    \item $\theta_1, \ldots, \theta_m$ is a regular sequence;
    \item $\theta_1, \ldots, \theta_m$ is a homogeneous system of parameters;
    \item $R$ is finite-dimensional over $\bk$;
    \item for all $1 \leq i \leq m$, there is some $N$ such that $x_i^N \in (\theta_1, \ldots, \theta_m)$;
    \item $R$ has Krull dimension $0$; and
    \item the only common zero of $\theta_1, \ldots, \theta_m$ over the algebraic closure $\overline{\bk}$ is the origin.
  \end{enumerate}

  \begin{proof}
    For (i)-(v), see for instance \cite[\S3]{MR0485835} and
    \cite[\S3]{stanley.1979}, and \cite{Cox-Little-OShea} for the equivalence of
    (iii)-(vi). In brief, (ii) $\Leftrightarrow$ (iii) is elementary, and (i) $\Leftrightarrow$ (ii) since $\bk[x_1, \ldots, x_m]$ is \textit{Cohen--Macaulay}, so regular sequences and homogeneous systems of parameters coincide. The equivalence (iii) $\Leftrightarrow$ (iv) is also elementary. For (iv) $\Leftrightarrow$ (v), the Krull dimension is the order of the pole of the Hilbert series of $R$ at $1$, which is $0$ if and only if $R$ is finite-dimensional over $\bk$. For (vi), the Krull (and vector space) dimension of $R$ is preserved by extension of scalars, and in the algebraically closed case we may apply the Nullstellensatz.
  \end{proof}
\end{lemma}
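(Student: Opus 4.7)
The plan is to establish the six equivalences by proving a small collection of direct implications rather than a single long cycle, since each equivalence draws on a different standard tool. A convenient grouping is: first prove (ii) $\Leftrightarrow$ (iii), then (i) $\Leftrightarrow$ (ii), then (iii) $\Leftrightarrow$ (iv), then (iii) $\Leftrightarrow$ (v), and finally (v) $\Leftrightarrow$ (vi). Set $A \coloneqq \bk[\theta_1, \ldots, \theta_m]$ for convenience and let $\fm_A$ be its irrelevant ideal, so that $R = \bk[x_1, \ldots, x_m]/\fm_A \bk[x_1, \ldots, x_m]$.

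For (ii) $\Leftrightarrow$ (iii), the forward direction is immediate: if $\bk[x_1, \ldots, x_m]$ is a finitely generated $A$-module, then $R$ is a finitely generated $A/\fm_A = \bk$-module. For the converse, I would lift a homogeneous $\bk$-basis of $R$ back to $\bk[x_1, \ldots, x_m]$ and apply the graded Nakayama lemma to conclude that these lifts generate $\bk[x_1, \ldots, x_m]$ as an $A$-module. For (iii) $\Leftrightarrow$ (iv), finite-dimensionality forces the sequence $1, x_i, x_i^2, \ldots$ to be linearly dependent in $R$, and homogeneity of the ideal promotes this to some $x_i^N \in (\theta_1, \ldots, \theta_m)$; conversely, if each $x_i^{N_i}$ lies in the ideal then $R$ is spanned by monomials of degree at most $\sum(N_i - 1) \cdot b_i$. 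For (iii) $\Leftrightarrow$ (v), I would use that the Krull dimension of a finitely generated nonnegatively graded $\bk$-algebra equals the order of the pole of its Hilbert series at $q = 1$, which is $0$ precisely when the Hilbert series is a polynomial. For (v) $\Leftrightarrow$ (vi), the Nullstellensatz identifies the dimension of $V(\theta_1, \ldots, \theta_m) \subset \overline{\bk}^m$ with the Krull dimension of $R \otimes_\bk \overline{\bk}$, which matches that of $R$; since the ideal is homogeneous, $V$ is a union of lines through the origin, so it is $0$-dimensional iff it is $\{0\}$.

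The main obstacle is (i) $\Leftrightarrow$ (ii). The conceptually cleanest route invokes that $\bk[x_1, \ldots, x_m]$ is Cohen--Macaulay of depth $m$, so that any system of parameters is automatically a regular sequence and vice versa; this is where one should cite the commutative algebra references. For a more self-contained argument in this graded polynomial setting, I would proceed by induction on $m$: the forward implication (regular $\Rightarrow$ HSOP) follows because killing a non-zero-divisor drops Krull dimension by one, while the reverse uses that a homogeneous element of positive degree in a domain is automatically a non-zero-divisor, plus an inductive analysis of associated primes in $\bk[x_1, \ldots, x_m]/(\theta_1, \ldots, \theta_{i-1})$. This depth-theoretic step is the only nontrivial ingredient; everything else reduces to formal manipulations with graded Hilbert series and the Nullstellensatz.
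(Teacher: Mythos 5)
Your proposal is correct and takes essentially the same route as the paper: the same decomposition into pairwise equivalences, Cohen--Macaulayness of the polynomial ring as the decisive input for (i) $\Leftrightarrow$ (ii), the Hilbert series pole-order criterion for the Krull dimension comparison, and extension of scalars plus the Nullstellensatz for (vi). You simply flesh out the steps the paper labels elementary (graded Nakayama for (ii) $\Leftrightarrow$ (iii), and the homogeneous-degree-separation argument for (iii) $\Leftrightarrow$ (iv)).
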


Let $R_{k}$ be the linear span of the homogeneous degree $k$ elements
in $R$.  Then the \textbf{Hilbert series} of $R$ is
\[
\Hilb(R; q) \coloneqq \sum_{k \geq 0} (\dim R_k) q^k
\]
If $R$ is finite-dimensional over $\bk$, then $\Hilb(R; q)$ is a
polynomial.

Given a homogeneous system of parameters $\theta_1, \ldots, \theta_m$
with corresponding ring $R$, is such a Hilbert series given by a CGF?
The affirmative answer given in the following theorem is based on the
work of Macaulay \cite{MR1281612} from 100 years ago. The proof uses
the natural short exact sequence $0 \to (\theta) \to R \to R/(\theta)
\to 0$ and induction.  Stanley built on this theory in his
work on Hilbert functions of graded algebras \cite{MR0485835} with
connections to invariant theory.

\begin{theorem}[{c.f.~\cite[Cor.~3.2-3.3]{MR0485835}}]\label{thm:cgf_tensor}
  If $\theta_1, \ldots, \theta_m$ is a regular sequence with
  $\deg(\theta_j) = a_j$ in the polynomial ring $\bk[x_1, \ldots,
  x_m]$ with $\deg(x_j) = b_j$, then
\[
R=\bk[x_1, \ldots, x_m]/(\theta_1, \ldots, \theta_m)
\]
  is a finite-dimensional $\bk$-vector space, and its Hilbert series is the basic cyclotomic generating function
  \begin{equation}\label{eq:cgf_tensor}
    \Hilb(R; q) \coloneqq \sum_{k \geq 0} (\dim R_k) q^k
      = \prod_{k=1}^m \frac{[a_k]_q}{[b_k]_q} \in \Phi^{+}.
  \end{equation}
  Moreover, the converse holds: if \eqref{eq:cgf_tensor} holds, then $\theta_1, \ldots, \theta_m$ is
  a regular sequence.
\end{theorem}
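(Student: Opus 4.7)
The plan is to prove the forward direction via the standard telescoping short exact sequence argument going back to Macaulay, and then handle the converse by showing finite-dimensionality via the order-of-pole computation and invoking \Cref{lem:hsop.eqivs}.

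For the forward direction, I would set $S_0 \coloneqq \bk[x_1, \ldots, x_m]$ and $S_i \coloneqq S_{i-1}/(\theta_i)$ for $1 \leq i \leq m$, so that $S_m = R$. Letting $M(-d)$ denote the shift of a graded module $M$ by $d$ (so that its degree-$k$ part becomes the degree-$(k+d)$ part), the defining condition of a regular sequence is precisely that multiplication by $\theta_i$ gives an injective map of graded $\bk$-modules $S_{i-1}(-a_i) \hookrightarrow S_{i-1}$ whose cokernel is, by definition, $S_i$. This produces the short exact sequence
$$0 \to S_{i-1}(-a_i) \xrightarrow{\,\cdot\, \theta_i} S_{i-1} \to S_i \to 0,$$
and applying the additivity of Hilbert series (with the shift by $(-a_i)$ contributing a factor of $q^{a_i}$) yields $\Hilb(S_i; q) = (1 - q^{a_i})\, \Hilb(S_{i-1}; q)$. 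Iterating $m$ times starting from the well-known formula $\Hilb(S_0; q) = \prod_{j=1}^m (1 - q^{b_j})^{-1}$ gives the desired product. Since $R$ is finite-dimensional over $\bk$ by \Cref{lem:hsop.eqivs} (i) $\Rightarrow$ (iii), the resulting expression is a polynomial with non-negative integer coefficients, hence lies in $\Phi^{+}$.

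For the converse, suppose \eqref{eq:cgf_tensor} holds. The strategy is to establish that $R$ is finite-dimensional and then invoke \Cref{lem:hsop.eqivs} in the other direction. Viewing the right-hand side as a rational function of $q$, its numerator $\prod_{k=1}^m (1 - q^{a_k})$ and denominator $\prod_{k=1}^m (1 - q^{b_k})$ each vanish to order exactly $m$ at $q = 1$, so the rational function is regular at $q = 1$ (the putative pole is removable). Since the order of the pole of $\Hilb(R; q)$ at $q = 1$ equals the Krull dimension of $R$, we deduce that $R$ has Krull dimension $0$. Then \Cref{lem:hsop.eqivs} (v) $\Rightarrow$ (iii) $\Rightarrow$ (i) yields that $\theta_1, \ldots, \theta_m$ is a regular sequence.

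The main point to check carefully is the additivity of Hilbert series applied to a short exact sequence involving a graded degree shift—this is where the factor $(1-q^{a_i})$ arises—but it is a standard verification and not a serious obstacle. The rest of the argument is essentially bookkeeping given the equivalences already encoded in \Cref{lem:hsop.eqivs}.
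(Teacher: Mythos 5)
Your proposal is correct. The forward direction is exactly the short-exact-sequence-plus-induction argument that the paper sketches (and attributes to Macaulay and Stanley, \cite[Cor.~3.2--3.3]{MR0485835}): the sequence $0 \to S_{i-1}(-a_i) \xrightarrow{\cdot\theta_i} S_{i-1} \to S_i \to 0$ is just the graded form of $0 \to (\theta) \to R \to R/(\theta) \to 0$ mentioned in the text, and iterating gives the product formula. Your treatment of the converse---reading off Krull dimension zero from the removable singularity of the right-hand side at $q=1$ and then invoking the chain (v)~$\Rightarrow$~(iii)~$\Rightarrow$~(i) of Lemma~\ref{lem:hsop.eqivs}---is a clean and correct way to finish, and it reuses the pole-order-equals-Krull-dimension fact that the paper already deploys in proving that lemma, so nothing new needs to be established.
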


\begin{example}
  Consider the ring \begin{equation}\label{eq:gr_CH} \frac{\bC[x_1,
\ldots, x_k]^{S_k}}{\langle h_{\ell+1}, \ldots,
h_{\ell+k}\rangle} \end{equation} where $h_i$ denotes the $i$th
homogeneous symmetric polynomial in $k$ variables $x_1, \ldots, x_k$.
This ring is well-known to be a presentation of the cohomology ring of
the complex Grassmannian of $k$-planes in $\ell+k$-space. The
polynomials $h_{\ell+1}, \ldots, h_{\ell+k}$ form a homogeneous system
of parameters, so we have the corresponding cyclotomic generating
function \[ \frac{[\ell+1]_q \cdots [\ell+k]_q}{[1]_q \cdots [k]_q} =
\binom{\ell+k}{k}_q, \] recovering the $q$-binomial coefficients.  See
\cite[Prop.~2.9]{MR2542141} for more regular sequences of a similar flavor.
\end{example}

\begin{example}
  Bessis introduced an HSOP associated to a
  well-generated complex reflection groups $W$ \cite[Thm.~5.3]{MR3296817}.
  The corresponding Hilbert series is a $q$-analogue of the $W$-noncrossing
  partition number,
    \[ \prod_{i=1}^n \frac{[ih]_q}{[d_i]_q} \in \Phi^{\alg}. \]
  CGFs are often intimately related to the cyclic sieving phenomenon
  \cite{Reiner-Stanton-White.CSP}, and Douvropolous proved a CSP
  in this context \cite{MR3940661}.
\end{example}

Suppose two basic CGFs $f, g$ arise as the Hilbert series of quotient
rings $R_1$ and $R_2$ coming from homogeneous systems of
parameters. Then $fg$ arises in the same way by taking the tensor
product of $R_1$ and $R_2$. Hence we may consider the \textbf{HSOP
monoid} $\Phi^{\alg}$ as a submonoid of $\Phi^+$ consisting of all
Hilbert series of quotients $\bC[x_1, \ldots, x_m]/(\theta_1, \ldots,
\theta_m)$ for homogeneous systems of parameters.   

\begin{lemma}  The HSOP monoid $\Phi^{\alg}$ is a submonoid of the Gale monoid $\Phi^{\gale}$.
  
  \begin{proof}

  Suppose $\theta_1, \ldots, \theta_m$ is a regular sequence for
$\bk[x_1, \ldots, x_m]$ where $\deg(\theta_i) = a_i$, $\deg(x_i) = b_i$,
and we have sorted the elements so that $a_1 \leq \cdots \leq a_m$ and
$b_1 \leq \cdots \leq b_m$. If $a_k < b_k$, then $\theta_1, \ldots,
\theta_k$ have degree at most $a_k$, and $\deg(x_k) = b_k > a_k$, so
$\theta_1, \ldots, \theta_k \in \bk[x_1, \ldots, x_{k-1}]$.
By \Cref{thm:cgf_tensor}, $\bk[x_1, \ldots, x_{k-1}]/(\theta_1, \ldots,
\theta_{k-1})$ is finite-dimensional, so it contains a power of
$\theta_k$, contradicting the zero-divisor condition.  Hence, $a_k
\geq b_k$ for all $k=1, \ldots, m$ and the result follows from the
definition of Gale order.
\end{proof}
\end{lemma}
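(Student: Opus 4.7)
The plan is to argue by contradiction. First, sort both multisets so that $a_1 \leq \cdots \leq a_m$ and $b_1 \leq \cdots \leq b_m$, relabel the variables so that $\deg(x_i) = b_i$, and (after a standard reordering of a homogeneous regular sequence) assume the $\theta_i$ are also arranged with $\deg(\theta_i) = a_i$ in the same sorted order. Suppose toward contradiction that $a_k < b_k$ for some index $k$.

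The key degree-theoretic observation is that for each $i \leq k$, $\deg(\theta_i) = a_i \leq a_k < b_k \leq b_j$ for every $j \geq k$, so no monomial appearing in $\theta_i$ can involve any of the ``heavy'' variables $x_k, x_{k+1}, \ldots, x_m$. Consequently, $\theta_1, \ldots, \theta_k$ all lie in the subring $\bk[x_1, \ldots, x_{k-1}]$, and so, viewed as polynomial functions on $\overline{\bk}^m$, each of them vanishes identically on the $(m-k+1)$-dimensional coordinate subspace $L = \{(0, \ldots, 0, t_k, \ldots, t_m) : t_j \in \overline{\bk}\}$. Thus the common zero locus of $\theta_1, \ldots, \theta_k$ in $\overline{\bk}^m$ contains $L$, and therefore has dimension at least $m - k + 1$.

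On the other hand, \Cref{lem:hsop.eqivs}(vi) tells us that the full common zero locus of $\theta_1, \ldots, \theta_m$ is just the origin, of dimension $0$. Cutting by the remaining $m - k$ polynomials $\theta_{k+1}, \ldots, \theta_m$, Krull's Hauptidealsatz says that each additional hypersurface slice can drop the dimension of an irreducible component by at most $1$, leading to a final dimension of at least $(m - k + 1) - (m - k) = 1$, contradicting the computed dimension of $0$. This forces $a_k \geq b_k$ for every $k$, which is exactly the Gale order condition in \Cref{def:gale.monoid}.

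The main subtlety I anticipate is handling reducibility when applying Krull's theorem componentwise (one must track the dimension of a specific irreducible component of the intermediate variety containing $L$), together with the initial justification that a homogeneous regular sequence can be reordered by degree without destroying regularity. A purely algebraic alternative is to apply \Cref{thm:cgf_tensor} directly to the smaller polynomial ring $\bk[x_1, \ldots, x_{k-1}]$: after noting via faithful flatness of $\bk[x_1, \ldots, x_m]$ over $\bk[x_1, \ldots, x_{k-1}]$ that $\theta_1, \ldots, \theta_{k-1}$ remain a regular sequence in the smaller ring, the quotient $\bk[x_1, \ldots, x_{k-1}]/(\theta_1, \ldots, \theta_{k-1})$ is a finite-dimensional graded $\bk$-algebra. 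The positive-degree element $\theta_k$ is then nilpotent there, so some $\theta_k^N \in (\theta_1, \ldots, \theta_{k-1})$, and the non-zero-divisor clause of the regular sequence hypothesis forces $\theta_k = 0$ modulo $(\theta_1, \ldots, \theta_{k-1})$, producing the same contradiction by \Cref{lem:hsop.eqivs}.
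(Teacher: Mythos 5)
Your proposal is correct, and it gives two routes: the main geometric one via dimension theory, and a fallback algebraic one that in fact coincides with the paper's proof. The shared starting point is identical to the paper's: sort so that $a_1 \leq \cdots \leq a_m$ and $b_1 \leq \cdots \leq b_m$, suppose $a_k < b_k$, and deduce that $\theta_1, \ldots, \theta_k \in \bk[x_1, \ldots, x_{k-1}]$ since no monomial of degree $\le a_k$ can involve $x_k, \ldots, x_m$.

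The paper then stays algebraic: since $\theta_1, \ldots, \theta_{k-1}$ descends to a regular sequence of full length $k-1$ in $\bk[x_1, \ldots, x_{k-1}]$ (the step you correctly flag as needing justification, e.g.\ via the identification $\bk[x_1,\ldots,x_m]/(\theta_1,\ldots,\theta_{i-1}) \cong \left(\bk[x_1,\ldots,x_{k-1}]/(\theta_1,\ldots,\theta_{i-1})\right)[x_k,\ldots,x_m]$, a faithfully flat extension), the quotient $\bk[x_1, \ldots, x_{k-1}]/(\theta_1, \ldots, \theta_{k-1})$ is finite-dimensional, a power of $\theta_k$ lands in $(\theta_1, \ldots, \theta_{k-1})$, and by iterating the non-zero-divisor condition one forces $\theta_k \in (\theta_1, \ldots, \theta_{k-1})$, which is absurd since that quotient is nonzero. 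Your ``purely algebraic alternative'' is exactly this, filled in more carefully than the paper's terse citation of \Cref{thm:cgf_tensor}.

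Your main argument via \Cref{lem:hsop.eqivs}(vi) and Krull's Hauptidealsatz is a genuinely different and pleasantly geometric route. The subtlety you flag is real: Krull's theorem controls individual irreducible components, and you need each cut to stay nonempty. The cleanest repair, rather than tracking a chosen component in $\overline{\bk}^m$ as you cut, is to restrict first: since $L \subseteq V(\theta_1, \ldots, \theta_k)$, one has $L \cap V(\theta_{k+1}, \ldots, \theta_m) \subseteq V(\theta_1, \ldots, \theta_m)$, and the left side is a nonempty (it contains the origin, as all ideals involved are homogeneous) closed subvariety of $L \cong \bA^{m-k+1}$ cut out by $m-k$ equations, hence of dimension at least $1$ by Krull. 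This cleanly contradicts \Cref{lem:hsop.eqivs}(vi). The geometric route is a bit longer to make airtight but arguably more vivid; the algebraic route, which the paper uses, requires only the flatness observation and the finite-dimensionality equivalence already established in \Cref{lem:hsop.eqivs}/\Cref{thm:cgf_tensor}.
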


Which basic CGFs can be realized as the Hilbert series of a quotient
ring by a homogeneous sequence of parameters?  In the special case
$b_1=\cdots=b_m=1$, i.e.~$\deg(x_j) = 1$ for all $j$, a homogeneous
system of parameters corresponds to a sequence of generators for a
classical \textbf{complete intersection} $X$. The corresponding
cyclotomic generating function is the Hilbert series of the projective
coordinate ring of $X$, \[ \Hilb(X; q) = \prod_{k=1}^m [a_k]_q. \]
Here the multiset of degrees $\{a_1, \ldots, a_m\}$ can clearly be
chosen arbitrarily.

For general denominator multisets $\{b_1, \ldots, b_m\}$, such
homogeneous systems of parameters yield complete intersections inside
\textbf{weighted projective space} $\bP^{\{b_1, \ldots, b_m\}}$.
However, it is not at all clear which multisets $\{a_1, \ldots, a_m\}$
are realizable degrees for some homogeneous system of parameters.  The
requirement that $\prod_{k=1}^m [a_k]_q/[b_k]_q \in \bZ_{\geq 0}[q]$
is a significant hurdle, as the next example shows.

\begin{example}
  One may check that
    \[ \frac{[3]_q[5]_q[14]_q}{[2]_q[3]_q[7]_q}
        = 1 + q^2 + q^4 - q^5 + q^6 + q^8 + q^{10} \]
   is not a cyclotomic generating function since it
   has a negative coefficient. Therefore, $\bk[x_1, x_2, x_3]$ with
   $\deg(x_1) = 2, \deg(x_2) = 3, \deg(x_3) = 7$
   has no homogeneous system of parameters with degrees $3, 5, 14$.
   Indeed, in this case, the only monic homogeneous elements
   of degree $3$ and $5$ are $x_2$ and $x_1 x_2$, and one
   may see in a variety of ways (regular sequences, dimension counting,
   nontrivial vanishing) that these two elements cannot belong
   to a homogeneous system of parameters. This example
   also satisfies all of the necessary conditions in
   \Cref{lem:cgf_ineqs}.
\end{example}

\begin{example}
Let $\bk[x_1,...,x_m]$ be a polynomial ring with $\deg(x_k) = b_{k}$
for each $k$. Let $c_1,\ldots , c_m$ be any positive integers.  Then
taking $\theta_{k}=x_{k}^{c_{k}}$ gives a homogeneous system of
parameters whose elements have degrees $a_{k}=c_{k}b_{k}$. Hence \[
\prod_{i=1}^m \frac{[c_kb_k]_q}{[b_k]_q} = \prod_{i=1}^m
[c_k]_{q^{b_k}} \in \Phi^{\alg} \] for all $c_1, \ldots, c_m, b_1,
\ldots, b_m \in \mathbb{Z}_{\geq 1}$.  Therefore, every sequence of
positive integers $b_{1},\ldots, b_{k}$ gives rise to a finite
dimensional quotient ring with Hilbert series given by a CGF with
denominator given by the product of $[b_{i}]_{q}$'s.  
\end{example}

By \Cref{lem:cgf_ineqs}, we know that for any cyclotomic generating
function
\[
f(q)=\prod_{j=1}^m [a_k]_q/[b_k]_q \in \Phi^{+},
\]
including
$f \in \Phi^{\alg}$, we have $a_k \in \Span_{\bZ_{\geq 0}}\{b_1,
\ldots, b_m\}$. It is not immediately clear how to characterize which
sequences $a_1, \ldots, a_k$ can actually be realized as degree
sequences of HSOP's.

\begin{example}
  Take $\deg(x_1) = 2, \deg(x_2) = 3$ in $\bk[x_1, x_2]$.
  Set $\theta_1 = x_1 x_2, \theta_2 = x_1^3 + x_2^2$, so that
  $\deg(\theta_1) = 5, \deg(\theta_2) = 6$. It is easy
  to see that $x_1^4, x_2^4 \in (\theta_1, \theta_2)$,
  so they form an HSOP. The corresponding Hilbert series is
    \[ \frac{[6]_q [5]_q}{[3]_q [2]_q} = q^6 + q^4 + q^3 + q^2 + 1 \in \Phi^{\alg}. \]
\end{example}

\begin{problem}
Besides identifying a specific homogeneous system of parameters in a
graded ring, how can one test membership in the HSOP monoid?
\end{problem}

\begin{problem}
How can one efficiently characterize the minimal
set of generators of $\Phi^{\alg}$?
\end{problem}

\begin{problem}
  Identify the growth rate of $\log |\Phi^{\alg}_n|$ as $n \to
\infty$. 
\end{problem}

One way to explore the HSOP monoid with a fixed denominator multiset
comes from an analogue of \Cref{lem:cgf_graph}.  In particular, the
analogous HSOP subgraph of the graph described in
\Cref{ssec:CGF_denom} remains connected with diameter at most $2m$.

\begin{lemma}\cite[p.7 ``Discussion'']{Hochster.notes.2007}
  Suppose $\bk[x_1, \ldots, x_m]$ is a polynomial ring with homogeneous regular sequences $\theta_1, \ldots, \theta_m$ and
  $\theta_1', \ldots, \theta_m'$. Then there exist homogeneous elements
  $\gamma_1, \ldots, \gamma_m$ such that
    \[ \gamma_1, \ldots, \gamma_i, \theta_{i+1}, \ldots, \theta_m
        \qquad\text{and}\qquad
        \gamma_1, \ldots, \gamma_i, \theta_{i+1}', \ldots, \theta_m' \]
  are regular sequences for all $1 \leq i \leq m$.
\end{lemma}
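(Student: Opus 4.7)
The plan is to construct $\gamma_1, \ldots, \gamma_m$ one at a time by induction on $i$, using that $A = \bk[x_1, \ldots, x_m]$ is Cohen--Macaulay together with graded prime avoidance. At step $i$, I assume inductively that homogeneous $\gamma_1, \ldots, \gamma_{i-1}$ have been chosen so that both
\[
  \gamma_1, \ldots, \gamma_{i-1}, \theta_i, \theta_{i+1}, \ldots, \theta_m
  \qquad\text{and}\qquad
  \gamma_1, \ldots, \gamma_{i-1}, \theta_i', \theta_{i+1}', \ldots, \theta_m'
\]
are regular sequences in $A$. The base case $i = 1$ is exactly the hypothesis of the lemma, and it suffices to carry out the inductive step.

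Since $A$ is Cohen--Macaulay, regular sequences of homogeneous elements may be reordered, so deleting the $i$th entry from each shows that the homogeneous ideals
\[
  J := (\gamma_1, \ldots, \gamma_{i-1}, \theta_{i+1}, \ldots, \theta_m)
  \qquad\text{and}\qquad
  J' := (\gamma_1, \ldots, \gamma_{i-1}, \theta_{i+1}', \ldots, \theta_m')
\]
are generated by regular sequences of length $m-1$.  Consequently both have height exactly $m-1$, and by Cohen--Macaulayness (unmixedness) every associated prime of $J$ or $J'$ is minimal and of height $m-1$. To produce $\gamma_i$ I would apply graded prime avoidance to the finite list of homogeneous minimal primes of $J$ together with those of $J'$.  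None of these primes contains the irrelevant ideal $\fm = (x_1, \ldots, x_m)$, since $\fm$ has height $m$ while each minimal prime has height $m-1$.  Graded prime avoidance then supplies a homogeneous element $\gamma_i \in \fm$ lying in none of the chosen primes.  Such a $\gamma_i$ is automatically a non-zerodivisor modulo $J$ and modulo $J'$, so by Cohen--Macaulayness and reordering, both $\gamma_1, \ldots, \gamma_i, \theta_{i+1}, \ldots, \theta_m$ and $\gamma_1, \ldots, \gamma_i, \theta_{i+1}', \ldots, \theta_m'$ are regular.  This is the claim at index $i$ and simultaneously serves as the inductive hypothesis at $i+1$.

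The main potential obstacle is the graded prime-avoidance step when $\bk$ is finite, in which case the elementary argument that a graded piece of $\fm$ cannot be a finite union of proper subspaces breaks down.  However, the graded version of prime avoidance remains valid over an arbitrary field: one can first exhibit, for each minimal prime $P_s$, a homogeneous element of $\fm \setminus P_s$, and then combine them by taking suitable products and high-degree linear combinations, exploiting that all primes in play are homogeneous.  See \cite[\S3]{MR0485835} for this variant in the graded setting.  With prime avoidance secured, iterating the inductive step from $i = 1$ to $i = m$ produces the required homogeneous elements $\gamma_1, \ldots, \gamma_m$.
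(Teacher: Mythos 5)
Your proof is correct. The paper does not actually reprove this lemma; it cites Hochster's notes, so there is no internal argument to compare against. That said, your argument is the natural one and almost certainly matches the cited source in spirit: induct on $i$, delete the $i$th entry using the fact that a homogeneous regular sequence in a graded Noetherian ring with $R_0$ a field remains regular under any permutation (and dropping the last term), observe that the two resulting ideals $J$ and $J'$ have pure height $m-1$ by Cohen--Macaulay unmixedness so that $\fm$ lies in none of their (finitely many) associated primes, and invoke homogeneous prime avoidance to produce $\gamma_i$. Two small points of precision: the permutability of regular sequences is a local/graded property that does not itself require the Cohen--Macaulay hypothesis (CM is what you need for unmixedness and the height count); and the finite-field worry you raise is not actually an obstruction, since homogeneous prime avoidance holds over any field as long as you allow the avoiding element to have arbitrary positive degree, by the usual power-sum trick. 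Neither affects the validity of the argument.
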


\begin{remark}
  The argument in \Cref{thm:cgf_tensor} works more generally when
$\bk[x_1, \ldots, x_m]$ is a \textbf{Cohen--Macaulay} $\bN$-graded
$\bk$-algebra of Krull dimension $m$ in the sense of
\cite[\S3]{stanley.1979}, except that the stated expression for
$\Hilb(R; q)$ will be multiplied by some polynomial $P(q) \in \bZ[q]$.
Intuitively, $P(q)$ arises from syzygies of $x_1, \ldots, x_m$ and may
have negative coefficients.  
\end{remark}

The study of Cohen--Macaulay rings has been a rich subject.  It would
be interesting to consider properties of the set of all Hilbert series
of Cohen--Macaulay rings as we have done here for CGFs.  

\begin{problem}
What interesting properties do Hilbert series of Cohen--Macaulay rings
have from the algebraic, probabilistic and analytic perspectives?
\end{problem}

\section{Appendix}\label{s:appendix}

The data below gives the size of $\cM_{n}$ for each monoid discussed
in \Cref{sec:monoid} and \Cref{sub:hsop}.  The sequence
$|\Phi^{\pm}_{n}|$ has a nice generating
function and Vaclav gives an asymptotic approximation in \cite[A120963]{oeis}.  We do not know
of any generating function formulas or asymptotics for the other
sequences, which we added to the OEIS in conjunction with this paper.
Code to produce this data is available at
\begin{center}
\url{https://sites.math.washington.edu/~billey/papers/CGFs/}.
\end{center}

\bigskip

\begin{tiny}
\[
\begin{array}{|l|l|l|}
\hline
\cM & |\cM_{n}| \text{ for } n=1,\ldots ,18 & \text{OEIS}\\
\hline
\Phi^{\lcc} & 1,2,3,5,7,12,16,26,35,53,70,109,142,217,285,418,548,799 & A360622\\
\hline
\Phi^{\uni} &
1,2,3,6,8,14,20,34,48,72,100,162,214,309,437,641,860,1205 & A360621\\
\hline
\Phi^{\gale} &
1,3,4,10,12,27,33,68,82,154,187,346,410,714,857,1460,1722,2860 & A362553\\
\hline
\Phi^{+} &
1,3,4,10,12,27,33,68,82,154,189,350,417,728,874,1492,1767,2937 & A360620\\
\hline
\Phi^{\pm} &
2,6,10,24,38,78,118,224,330,584,838,1420,2002,3258,4514,7134,9754,15010 & A120963\\
\hline
\end{array}
\]
\end{tiny}
\bigskip

The data below gives the number of generators
for each monoid discussed in \Cref{sec:monoid} and \Cref{sub:hsop}.
The sequence of the number of minimal generators of $\Phi^{\pm}$ by degree
in \cite[A014197]{oeis} has a nice
Dirichlet generating function and other formulas.  We do not know of
any generating function formulas or asymptotics for the other
sequences below.

\begin{tiny}
\[
\begin{array}{|l|l|l|}
\hline
\cM & \text{Number of generators of $\cM$ in degree $n$ for } n=1,\ldots ,20\\
\hline \Phi^{\lcc} &
1,1,1,1,1,2,2,4,4,7,8,18,19,37,42,66,87,132,157,252 & A361439
\\
\hline
\Phi^{\uni} &
1,1,1,2,2,3,4,7,10,9,15,28,30,34,66,82,125,126,222,294 & A361440
\\
\hline
\Phi^{\gale} &
1,2,1,3,1,4,1,6,1,5,1,14,2,9,4,28,1,33,14,61 & A362554
\\
\hline
\Phi^{+} &
1,2,1,3,1,4,1,6,1,5,3,16,5,14,6,37,9,46,33,87 & A361441
\\
\hline
\Phi^{\pm} &
2,3,0,4,0,4,0,5,0,2,0,6,0,0,0,6,0,4,0,5 & A014197
\\
\hline
\end{array}
\]
\end{tiny}


\subsection*{Acknowledgments}\label{sec:ack}

We would like to thank Theo Douvropoulos, Darij Grinberg, Sam Hopkins,
Matja\v{z} Konvalinka, Svante Janson, Neil Sloane, Karen Smith, Dennis
Stanton, Richard Stanley, Garcia Sun, Nathan Williams, and Doron
Zeilberger for insightful comments and discussions.  We also thank the
anonymous referees that provided helpful suggestions.

Billey was partially supported by the Washington Research Foundation and
NSF DMS-1764012. Swanson was partially supported by NSF DMS-2348843.


\end{document}